\title{Pre-Lie algebras with divided powers and the Deligne groupoid in positive characteristic}
\date{November 1, 2023}
\author{Marvin VERSTRAETE}
\begin{document}

\newtheorem{defi}{Definition}[section]
\newtheorem{thm}[defi]{Theorem}
\newtheorem{thmA}{Theorem}
\newtheorem*{thm*}{Theorem}
\newtheorem{cor}[defi]{Corollary}
\newtheorem{remarque}[defi]{Remark}
\newtheorem{lm}[defi]{Lemma}
\newtheorem{prop}[defi]{Proposition}
\newtheorem{algo}[defi]{Algorithme}
\newtheorem{cons}[defi]{Construction}
\newtheorem{propdef}[defi]{Proposition-Definition}

\setcounter{tocdepth}{2}

\renewcommand*{\thethmA}{\Alph{thmA}}

\maketitle

\newcommand{\antishriek}{\mbox{\footnotesize{\rotatebox[origin=c]{180}{$!$}}}}

\let\thefootnote\relax

\begin{abstract}
The purpose of this paper is to develop a deformation theory controlled by pre-Lie algebras with divided powers over a ring of positive characteristic. We show that every differential graded pre-Lie algebra with divided powers comes with operations, called weighted braces, which we use to generalize the classical deformation theory controlled by Lie algebras over a field of characteristic $0$. Explicitly, we define the Maurer-Cartan set, as well as the gauge group, and prove that there is an action of the gauge group on the Maurer-Cartan set. This new deformation theory moreover admits a Goldman-Millson theorem which remains valid over the integers. As an application, we give the computation of the $\pi_0$ of a mapping space $\text{\normalfont Map}(B^c(\mathcal{C}),\mathcal{P})$ with $\mathcal{C}$ and $\mathcal{P}$ suitable cooperad and operad respectively.
\end{abstract}

\tableofcontents

\section*{Introduction}
\addcontentsline{toc}{section}{Introduction}

An important result in deformation theory asserts that every deformation problem over a field of characteristic $0$ can be encoded by a differential graded Lie algebra (see \cite{lurie} and \cite{pridham}). More precisely, any deformation problem can be described by a solution of the \textit{Maurer-Cartan equation}:
$$d(x)+\frac{1}{2}[x,x]=0,$$

\noindent in some differential graded Lie algebra. The group obtained by the integration of the differential graded Lie algebra into a Lie group, called the \textit{gauge group}, moreover acts on the Maurer-Cartan set. The orbits of this action give isomorphism classes of deformation problems.

In \cite{dotsenko}, Dotsenko-Shadrin-Vallette showed that if the differential graded Lie algebra comes from a differential graded pre-Lie algebra, then the Maurer-Cartan equation, the gauge group and its action on the Maurer-Cartan set can be described in terms of pre-Lie operations. A differential graded pre-Lie algebra is a vector space $L$ with a bilinear operation $\star:L\otimes L\longrightarrow L$ such that
$$(x\star y)\star z-x\star(y\star z)=(-1)^{|y||z|}((x\star z)\star y-x\star (z\star y)),$$

\noindent and which satisfies the Leibniz rule with respect to the differential. Every differential graded pre-Lie algebra is in particular a differential graded Lie algebra with the graded commutator:
$$[x,y]=x\star y-(-1)^{|x||y|}y\star x.$$

\noindent Dotsenko-Shadrin-Vallette showed in particular that given a pre-Lie algebra $L$, the pre-Lie exponential map $exp:L^0\longrightarrow (1+L^0)$ induces an isomorphism between the gauge group and the group $(1+L^0,\circledcirc,1)$ with $\circledcirc$ the circular product defined by
$$a\circledcirc(1+b)=\sum_{n\geq 0}\frac{1}{n!}a\{\underbrace{b,\ldots,b}_n \},$$

\noindent where $-\{-,\ldots,-\}$ denotes the \textit{symmetric braces} determined by the pre-Lie structure $\star$, starting with $x\{y\}=x\star y$. Then, by writing the Maurer-Cartan equation as a zero-square equation, they prove that the action of the gauge group on the Maurer-Cartan set can be computed in terms of the circular product $\circledcirc$ as
$$e^{\lambda}\cdot \alpha=(e^{\lambda}\star\alpha)\circledcirc e^{-\lambda},$$

\noindent allowing us to have an easier way to compute the Deligne groupoid associated to any differential graded pre-Lie algebra over a field of characteristic $0$.\\

The aim of this paper is to develop a deformation theory in positive characteristic which generalizes the deformation theory controlled by pre-Lie algebras over a field of characteristic $0$ developed in \cite{dotsenko}. Our idea is to use differential graded pre-Lie algebras with divided powers.\\

The notion of a pre-Lie algebra with divided powers (or $\Gamma(\mathcal{P}re\mathcal{L}ie,-)$-algebra) has been studied by Cesaro in \cite{cesaro}. He showed in particular that every pre-Lie algebra with divided powers comes equipped with weighted brace operations $-\{-,\ldots,-\}_{r_1,\ldots,r_n}$, for each collection of integers $r_1,\ldots,r_n\geq 0$, which satisfy similar identities as the quantities
$$x\{y_{1},\ldots,y_{n}\}_{r_{1},\ldots,r_{n}}=\frac{1}{\prod_i r_i!}x\{\underbrace{y_1,\ldots,y_1}_{r_1},\ldots,\underbrace{y_n,\ldots,y_n}_{r_n}\}$$

\noindent in a pre-Lie algebra over a field of characteristic $0$ (see \cite[Propositions 5.9-5.10]{cesaro} for a precise list of these identities).\\

Every differential graded pre-Lie algebra with divided powers $L$ comes equipped with analogous weighted brace operations $-\{-,\ldots,-\}_{r_1,\ldots,r_n}$ which satisfy a graded version of the identities satisfied by weighted braces in the non graded framework. In this context, we have an analogue of the Maurer-Cartan equation:
$$d(x)+x\{x\}_{1}=0.$$

\noindent With suitable convergence hypothesis, we also get that the circular product $\circledcirc$ can be written as
$$a\circledcirc(1+b)=\sum_{n\geq 0}a\{b\}_{n}$$

\noindent and gives rise to a group structure on $1+L^0$. This group is called the \textit{gauge group} of $L$. As in characteristic $0$, we also show that this group acts on the Maurer-Cartan set of $L$.

\begin{thmA}\label{theoremA}
    Let $\mathbb{K}$ be a ring.
    \begin{enumerate}[(i)]
        \item In any complete differential graded pre-Lie algebra with divided powers $L$, the circular product $\circledcirc$, defined as above, endows the set $1+L^0$ with a group structure.
        \item If we denote by $d$ the differential of $L$, then this group acts on the Maurer-Cartan set via the formula
    $$(1+\mu)\cdot \alpha=(\alpha+\mu\{\alpha\}_1-d(\mu))\circledcirc(1+\mu)^{\circledcirc-1}.$$
\end{enumerate}
\end{thmA}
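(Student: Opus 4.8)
\emph{Proof sketch.}
The idea is to reduce both statements to combinatorial identities among the weighted braces $-\{-\}_{n}$, using the completeness of $L$ throughout to make sense of the infinite sums. \textbf{Part (i).} Convergence of $a\circledcirc(1+b)=\sum_{n\ge0}a\{b\}_{n}$ in $1+L^{0}$ is immediate from completeness, and $1$ is a two-sided unit by the normalisation of the braces ($a\{0\}_{0}=a$, $a\{0\}_{n}=0$ for $n\ge1$, together with the values of $1\{b\}_{n}$). The substance of (i) is associativity: I would expand $\bigl(a\circledcirc(1+b)\bigr)\circledcirc(1+c)$ and $a\circledcirc\bigl((1+b)\circledcirc(1+c)\bigr)$ as convergent series of iterated weighted braces in $a,b,c$ and match them term by term using the graded analogues of Cesaro's $\Gamma(\mathcal{P}re\mathcal{L}ie,-)$-identities \cite[Propositions 5.9--5.10]{cesaro}, notably the ``composition'' rule that rewrites a nested expression $\bigl(x\{\bar y\}_{\bar r}\bigr)\{\bar z\}_{\bar s}$ as a sum of single braces $x\{\cdots\}$. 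Inverses I would construct by hand: given $1+\mu$, solve $(1+\mu)\circledcirc(1+\nu)=1$ for $\nu$ recursively along the filtration of $L$ (at each stage $\nu$ occurs linearly modulo higher filtration), assemble $1+\nu\in 1+L^{0}$ by completeness, and promote this one-sided inverse to a two-sided one using associativity. Over a field of characteristic $0$ this is the transport of $(\mathfrak g^{0},\mathrm{BCH},0)$ along the pre-Lie exponential in \cite{dotsenko}; that route is unavailable over an arbitrary ring, which is exactly why the direct argument is needed here.

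\textbf{Part (ii): auxiliary identities and the conjugation picture.} I would first establish three compatibility identities, each a consequence of the brace relations (and, for the first, the Leibniz rule for $d$): a Leibniz-type identity $d(x\circledcirc g)=(dx)\circledcirc g+(\text{a term involving }d\mu)$ for $g=1+\mu$; an identity expressing $[a,b]\circledcirc g$ through $a\circledcirc g$ and $b\circledcirc g$; and the right-action identity $(x\circledcirc g)\circledcirc h=x\circledcirc(g\circledcirc h)$ (proved like associativity in (i)). The organising principle is the ``zero-square'' reformulation: for $\alpha\in L^{1}$ the twisted differential $d_{\alpha}(x):=d(x)+[\alpha,x]$ satisfies $d_{\alpha}^{2}=[\,d\alpha+\alpha\{\alpha\}_{1}\,,\,-\,]$, so $\alpha\in\mathrm{MC}(L)$ forces $d_{\alpha}^{2}=0$; the group $(1+L^{0},\circledcirc)$ acts on $L$ by the degree-preserving linear automorphisms $R_{g}\colon x\mapsto x\circledcirc g$, with $R_{g}^{-1}=R_{g^{\circledcirc-1}}$ by the right-action identity; and the statement amounts morally to the conjugation identity
\[
R_{g}^{-1}\circ d_{\alpha}\circ R_{g}=d+[\,g\cdot\alpha\,,\,-\,],\qquad g\cdot\alpha=(\alpha+\mu\{\alpha\}_{1}-d\mu)\circledcirc(1+\mu)^{\circledcirc-1}.
\]
This I would derive by evaluating $R_{g}^{-1}\bigl(d_{\alpha}(R_{g}x)\bigr)$ on $x\in L$ and applying the three identities above; the formula for $g\cdot\alpha$ in the statement is precisely the element one reads off from the result.

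\textbf{Part (ii): the action axioms.} It then remains to check: (1) $g\cdot\alpha\in\mathrm{MC}(L)$ whenever $\alpha\in\mathrm{MC}(L)$; (2) $1\cdot\alpha=\alpha$; (3) $(g\circledcirc h)\cdot\alpha=g\cdot(h\cdot\alpha)$. Point (2) is the case $\mu=0$ of the formula. For (1) I would compute $d(g\cdot\alpha)+(g\cdot\alpha)\{g\cdot\alpha\}_{1}$ directly, via the three compatibility identities, and exhibit it as an explicit operator (depending on $g$) applied to $d\alpha+\alpha\{\alpha\}_{1}$, so that it vanishes when $\alpha$ is Maurer--Cartan. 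For (3): the conjugation picture makes it plausible, since $R_{g\circledcirc h}=R_{h}\circ R_{g}$ turns $\circledcirc$ into operator composition; but over a general ring one cannot pass from an equality of the operators $x\mapsto[\gamma,x]$ to an equality of the elements $\gamma$, as these maps need not be injective, so (3) must be verified by a genuine expansion of both sides in weighted braces (using that $\alpha$ is Maurer--Cartan where needed), the conjugation identity serving only to predict the answer.

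\textbf{Main obstacle.} The uniform difficulty is the weighted-brace bookkeeping: the associativity of $\circledcirc$, the three compatibility identities, the derivation of the formula for $g\cdot\alpha$, the Maurer--Cartan equivariance in (1), and the composition axiom (3) all reduce to coefficient identities among nested operations $-\{-\}_{n}$ in the graded setting, where the Koszul signs must be tracked carefully. In characteristic $0$ these are automatic consequences of the exponential and Baker--Campbell--Hausdorff formalism, but over a general ring $\mathbb{K}$ they must be extracted from Cesaro's relations directly; a recurring secondary point is justifying the convergence of every series being manipulated, which is where the completeness of $L$ is repeatedly used.
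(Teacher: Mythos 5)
Your plan coincides with the paper's argument on most points: associativity of $\circledcirc$ is proved exactly as you propose, by expanding both sides and matching terms via Cesaro's composition relation (formula $(vi)$ of Theorem \ref{Cesaro}) together with relation $(v)$, the multiplicity count $n!/(r_1!\cdots r_q!)$ absorbing the $1/n!$; and part (ii) is carried out through precisely the three compatibility identities you list (the Leibniz rule for $d(\alpha\circledcirc(1+\mu))$, the interaction of $-\{\beta\}_1$ with $\circledcirc$ on either side, and the right-action identity), followed by a direct weighted-brace verification of Maurer--Cartan preservation and of the composition axiom. The paper likewise never uses the conjugation operators $R_g$, consistent with your remark that $[\gamma,-]$ does not determine $\gamma$ over a general ring. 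The one genuinely different route is the construction of inverses: you solve $(1+\mu)\circledcirc(1+\nu)=1$ recursively along the filtration and upgrade the right inverse to a two-sided one by the standard monoid argument, which works and is shorter; the paper instead exhibits the closed formula $(1-\mu)^{\circledcirc-1}=1+\sum_{t}\mathcal{O}t(\mu)$, summed over unlabelled rooted trees, checking the right-inverse identity by the root-and-branches decomposition of a tree and the left-inverse identity by a leaf-removal count ending in $\sum_{k}(-1)^{k}\binom{m_t}{k}=0$. Either suffices for the theorem; the explicit formula is more informative and is convenient later, but your recursion is a perfectly good substitute.

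One concrete warning about the sign bookkeeping you defer. In the verification that $\beta=(1+\mu)\cdot\alpha$ is again Maurer--Cartan, applying $-\{\alpha\}_1$ to $d(\mu)=\alpha+\mu\{\alpha\}_1-\beta\circledcirc(1+\mu)$ produces, via relation $(vi)$, the term $\mu\{\alpha\}_1\{\alpha\}_1=\mu\{\alpha\{\alpha\}_1\}_1+\mu\{\alpha,\alpha\}_{1,1}$, and after all cancellations one is left with $(d(\beta)+\beta\{\beta\}_1)\circledcirc(1+\mu)=\mu\{\alpha,\alpha\}_{1,1}$. This residual term is \emph{not} an operator applied to $d(\alpha)+\alpha\{\alpha\}_1$, so your plan of exhibiting the defect of $\beta$ as such an operator does not quite close by itself. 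Moreover, since $\alpha$ has odd degree, naive antisymmetry only yields $2\,\mu\{\alpha,\alpha\}_{1,1}=0$, which is insufficient over a ring with $2$-torsion. The paper kills the term by going back to the construction of the weighted braces: the orbit element $\mathcal{O}F_2(e,e_1,e_2)$ for three distinct generators with $|e_1|=|e_2|=1$ is a signed sum of six basis tensors whose images under $e_1,e_2\mapsto\alpha$ cancel in pairs, so $\mu\{\alpha,\alpha\}_{1,1}=0$ identically. This is a genuinely divided-power phenomenon and is the one step where ``track the Koszul signs'' must be supplemented by the actual definition of the operations; once it is in place, the rest of your outline goes through as written.
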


We prove that this new deformation theory satisfies an analogue of the Goldman-Millson theorem given in \cite[$\mathsection$2.4]{goldman}. Let $\text{\normalfont Deligne}(L,A)$ be the Deligne groupoid of the complete dg pre-Lie algebra with divided powers $L\otimes\mathfrak{m}_A$, where $L$ is a dg pre-Lie algebra with divided powers and $\mathfrak{m}_A$ the maximal ideal of a local artinian algebra $A$ over the field of fraction $\textbf{K}$ of $\mathbb{K}$. We precisely get the following result.

\begin{thmA}\label{theoremB}
    Let $\mathbb{K}$ be a noetherian integral domain and $\textbf{K}$ its field of fractions. Let $L$ and $\overline{L}$ be two positively graded $\Gamma(\mathcal{P}re\mathcal{L}ie,-)$-algebras. Let $\varphi:L\longrightarrow\overline{L}$ be a morphism of $\Gamma(\mathcal{P}re\mathcal{L}ie,-)$-algebras such that $H^0(\varphi)$ and $H^1(\varphi)$ are isomorphisms and $H^2(\varphi)$ is a monomorphism. Then for every local artinian $\textbf{K}$-algebra $A$, the induced functor $\varphi_*:\text{\normalfont Deligne}(L,A)\longrightarrow\text{\normalfont Deligne}(\overline{L},A)$ is an equivalence of groupoids.
\end{thmA}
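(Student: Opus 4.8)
The plan is to mimic the classical Goldman–Millson argument as presented in \cite{goldman}, replacing every use of the Lie-theoretic exponential and Baker–Campbell–Hausdorff formulas by the weighted brace operations and the circular product $\circledcirc$ furnished by the $\Gamma(\mathcal{P}re\mathcal{L}ie,-)$-structure, and using Theorem A to get the gauge action in the divided-power setting. Since $A$ is a local artinian $\textbf{K}$-algebra, its maximal ideal $\mathfrak{m}_A$ is nilpotent, so the dg pre-Lie algebras with divided powers $L\otimes\mathfrak{m}_A$ and $\overline{L}\otimes\mathfrak{m}_A$ are complete in the sense needed for Theorem A; in particular $\varphi\otimes\mathrm{id}_{\mathfrak{m}_A}$ is a morphism between complete dg pre-Lie algebras with divided powers, inducing a functor $\varphi_*$ between the associated Deligne groupoids (objects = Maurer–Cartan elements, morphisms = gauges under the $\circledcirc$-action). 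We must show $\varphi_*$ is essentially surjective and fully faithful.

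The key reduction is the standard dévissage on $A$. Choose a filtration $\mathfrak{m}_A=I_0\supset I_1\supset\cdots\supset I_N=0$ by ideals with $I_k\cdot\mathfrak{m}_A\subseteq I_{k+1}$ and each $I_k/I_{k+1}$ a one-dimensional $\textbf{K}$-vector space, so that at each stage $A_k:=A/I_{k+1}$ is a square-zero extension of $A_{k-1}$ by $J_k:=I_k/I_{k+1}$. By induction on $k$ it suffices to prove that if $\varphi_*$ is an equivalence over $A_{k-1}$ then it is an equivalence over $A_k$. The advantage of a square-zero extension is that the obstruction theory linearizes: over $J_k$ the weighted braces $x\{y\}_r$ with $r\ge 2$ and all mixed higher braces vanish on elements of $L\otimes J_k$ (they land in $L\otimes J_k^2=0$), so the Maurer–Cartan functor and the gauge action become \emph{affine} over $J_k$. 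Concretely, the fiber of $\mathrm{MC}(L\otimes\mathfrak{m}_{A_k})\to\mathrm{MC}(L\otimes\mathfrak{m}_{A_{k-1}})$ over a given $\alpha$ is a torsor under $Z^1(L)\otimes J_k$ once nonempty, its obstruction to nonemptiness lies in $H^2(L)\otimes J_k$, and the relevant gauge-equivalence data at this stage is controlled by $H^0(L)\otimes J_k$ and $H^1(L)\otimes J_k$. This is exactly where the hypotheses on $\varphi$ enter: $H^0(\varphi)$ and $H^1(\varphi)$ isomorphisms give essential surjectivity and fullness at each square-zero step, and $H^2(\varphi)$ a monomorphism guarantees that an obstruction class vanishing downstairs forces it to vanish upstairs (so liftability is preserved), while $H^1(\varphi)$ injective gives faithfulness.

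More precisely, I would organize the inductive step into three lemmas, all proved by direct computation with the weighted-brace identities of \cite{cesaro} (Propositions 5.9–5.10) in their graded form: (1) \emph{Essential surjectivity:} given $\bar\alpha\in\mathrm{MC}(\overline{L}\otimes\mathfrak{m}_{A_k})$, use the inductive hypothesis to lift its image in $A_{k-1}$ to some $\alpha'\in\mathrm{MC}(L\otimes\mathfrak{m}_{A_{k-1}})$; the obstruction to lifting $\alpha'$ to $A_k$ compatibly with $\bar\alpha$ lives in $H^2(L)\otimes J_k$ and maps to the (vanishing, since $\bar\alpha$ exists) obstruction for $\overline{L}$, hence vanishes by injectivity of $H^2(\varphi)$; then the set of such lifts is a torsor under $Z^1(L)\otimes J_k$ and surjectivity of $H^1(\varphi)$ together with bijectivity of $H^0(\varphi)$ lets us adjust by a gauge to hit $\bar\alpha$ on the nose. (2) \emph{Fullness:} a gauge in $\overline{L}$ between images of $\alpha_0,\alpha_1$ restricts to a gauge over $A_{k-1}$, lifted by induction; the discrepancy over $J_k$ is a class in $H^0(\overline{L})\otimes J_k$ killed by a class in $H^0(L)\otimes J_k$ via surjectivity of $H^0(\varphi)$, after correcting by a $Z^1$-ambiguity handled by $H^1(\varphi)$. (3) \emph{Faithfulness:} two gauges in $L\otimes\mathfrak{m}_A$ with equal image and equal source become equal over $A_{k-1}$ by induction, and their difference over $J_k$ is a class in $H^0(L)\otimes J_k$ injecting into $H^0(\overline{L})\otimes J_k$, hence zero. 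The base case $A_0=\textbf{K}$ (i.e. $\mathfrak{m}_{A_0}=0$) is trivial since both groupoids are terminal.

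The main obstacle I anticipate is purely technical rather than conceptual: verifying that in the square-zero situation the $\circledcirc$-action of Theorem A really does degenerate to the expected affine formula $\mu\cdot\alpha=\alpha+\mu\{\alpha\}_1-d(\mu)$ modulo $J_k^2$, and that the cocycle/coboundary bookkeeping in the three lemmas is consistent with the graded signs and with the precise normalization of the weighted braces $-\{-\}_n$ appearing in $a\circledcirc(1+b)=\sum_{n\ge 0}a\{b\}_n$. In characteristic $0$ one simply invokes that $L\otimes\mathfrak{m}_A$ is nilpotent and all formulas truncate; here one must check that the divided-power structure does not introduce denominators that are inverted only in $\textbf{K}$ but needed integrally — however, since we have base-changed to the field $\textbf{K}$ and $A$ is a $\textbf{K}$-algebra, this is not an issue for Theorem B itself (the integrality refinement is a separate remark). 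So the real work is a careful but routine transcription of the Schlessinger–Stasheff/Goldman–Millson obstruction calculus into the weighted-brace language, using completeness of $L\otimes\mathfrak{m}_A$ to make every relevant sum finite.
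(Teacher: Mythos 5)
Your proposal follows essentially the same route as the paper: both reduce by the standard artinian dévissage to a square-zero extension $A\to A/\mathfrak{I}$ with $\mathfrak{I}\cdot\mathfrak{m}_A=0$, establish the affine degeneration of the gauge action $(1+u+\eta)\cdot\alpha=(1+\eta)\cdot\alpha-d(u)$ for $u\in L^0\otimes\mathfrak{I}$, and build the three obstruction classes in $H^2(L\otimes\mathfrak{I})$ (liftability of Maurer--Cartan elements), $Z^1/H^1(L\otimes\mathfrak{I})$ (torsor of lifts and gauge-triviality), and $H^0(L\otimes\mathfrak{I})$ (uniqueness of gauges), after which the hypotheses on $H^0(\varphi)$, $H^1(\varphi)$, $H^2(\varphi)$ are fed into the formal Goldman--Millson argument exactly as in \cite[$\mathsection$2.11]{goldman}. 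The paper organizes this as three explicit obstruction maps $o_2,o_1,o_0$ rather than three lemmas on essential surjectivity, fullness and faithfulness, but the content is the same.
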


Other approaches to generalize the usual deformation theory in the positive characteristic framework have been proposed recently in the literature. We have for instance a deformation theory in an associative context, via $\mathcal{A}_{\infty}$-algebras, which is used to study deformations of group representations (see \cite{associative}). Another approach is given by (spectral) partition Lie algebras to get a full generalization of the Lurie-Pridham correspondence in the setting of a field with positive characteristic (see \cite{pdoperads,partitionliealgebra}).\\

The main motivation for the approach developed in this paper is that operadic deformation problems are expressed in terms of pre-Lie structures. The goal is then to compute the $\pi_{0}$ of a mapping space $\text{\normalfont Map}(B^c(\mathcal{C}),\mathcal{P})$, where we take any dg operad $\mathcal{P}$ on the target and the operad $B^c(\mathcal{C})$ given by the cobar of a dg coaugmented cooperad $\mathcal{C}$ on the source. Recall simply that $B^c(\mathcal{C})$ defines a cofibrant operad when $\mathcal{C}$ is cofibrant as a symmetric sequence ($\Sigma_\ast$-cofibrant). It is well known that, over a field of characteristic $0$, the $\pi_0$ of this mapping space is the set of isomorphism classes of the Deligne groupoid of the Lie algebra $\text{\normalfont Hom}_\Sigma(\overline{\mathcal{C}},\overline{\mathcal{P}})$. Using the pre-Lie algebra structure of $\text{\normalfont Hom}_\Sigma(\overline{\mathcal{C}},\overline{\mathcal{P}})$, this can be seen as a consequence of the computations in \cite{dotsenko}. To extend this result, we first show that $\text{\normalfont Hom}_\Sigma(\overline{\mathcal{C}},\overline{\mathcal{P}})$ admits a structure of dg pre-Lie algebra with divided powers. Then we get the following statement.

\begin{thmA}\label{theoremC}
Let $\mathbb{K}$ be a field. Suppose that $\mathcal{C}$ is a $\Sigma_{*}$-cofibrant coaugmented dg cooperad and $\mathcal{P}$ an augmented dg operad. We then have an isomorphism:
$$\pi_{0}(\text{\normalfont Map}(B^c(\mathcal{C}),\mathcal{P}))\simeq \pi_{0}\text{\normalfont Deligne}(\text{\normalfont Hom}_{\Sigma}(\overline{\mathcal{C}},\overline{\mathcal{P}})),$$

\noindent where $\pi_{0}\text{\normalfont Deligne}(\text{\normalfont Hom}_{\Sigma}(\overline{\mathcal{C}},\overline{\mathcal{P}}))$ denotes the set of isomorphism classes of the Deligne groupoid.
\end{thmA}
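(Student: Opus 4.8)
The plan is to combine the universal property of the cobar construction with the model-categorical description of $\pi_{0}$ of a mapping space, the only genuinely new ingredient being a cylinder object for $B^c(\mathcal{C})$ whose morphisms into $\mathcal{P}$ realise the gauge action of Theorem~\ref{theoremA}. First I would fix the projective model structure on dg operads over the field $\mathbb{K}$, whose weak equivalences are the arity-wise quasi-isomorphisms and whose fibrations are the arity-wise surjections; then every dg operad is fibrant, and since $\mathcal{C}$ is $\Sigma_{*}$-cofibrant the cobar construction $B^c(\mathcal{C})=(\mathcal{F}(s^{-1}\overline{\mathcal{C}}),d_{B^c})$ is cofibrant. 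Hence $\pi_{0}\text{\normalfont Map}(B^c(\mathcal{C}),\mathcal{P})$ is the quotient of $\text{\normalfont Hom}_{\mathrm{Op}}(B^c(\mathcal{C}),\mathcal{P})$ by the homotopy relation, which, the source being cofibrant and the target fibrant, is an equivalence relation that can be computed with any cylinder object on $B^c(\mathcal{C})$.

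The universal property of $B^c$ identifies $\text{\normalfont Hom}_{\mathrm{Op}}(B^c(\mathcal{C}),\mathcal{P})$ with the set of twisting morphisms $\mathcal{C}\to\mathcal{P}$, and the twisting-morphism equation $d(\alpha)+\alpha\star\alpha=0$ is exactly the Maurer-Cartan equation $d(\alpha)+\alpha\{\alpha\}_{1}=0$ of the complete dg pre-Lie algebra with divided powers $\mathfrak{g}:=\text{\normalfont Hom}_{\Sigma}(\overline{\mathcal{C}},\overline{\mathcal{P}})$, since $\alpha\{\alpha\}_{1}=\alpha\star\alpha$ for the convolution pre-Lie product underlying the structure of $\mathfrak{g}$; completeness is supplied by the arity filtration, with respect to which the weighted brace operations are nilpotent, so that the circular product $\circledcirc$ converges. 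The statement therefore reduces to showing that two twisting morphisms $\alpha_{0},\alpha_{1}$ determine homotopic operad morphisms $B^c(\mathcal{C})\to\mathcal{P}$ if and only if $\alpha_{0}$ and $\alpha_{1}$ are gauge equivalent in $\mathfrak{g}$; granting this, Theorem~\ref{theoremA} and the definition of the Deligne groupoid give $\pi_{0}\text{\normalfont Map}(B^c(\mathcal{C}),\mathcal{P})=\text{\normalfont MC}(\mathfrak{g})/\text{gauge}=\pi_{0}\text{\normalfont Deligne}(\text{\normalfont Hom}_{\Sigma}(\overline{\mathcal{C}},\overline{\mathcal{P}}))$.

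To prove that equivalence I would construct an explicit cylinder object
$$\text{\normalfont Cyl}(B^c(\mathcal{C}))=\bigl(\mathcal{F}(s^{-1}\overline{\mathcal{C}}^{(0)}\oplus s^{-1}\overline{\mathcal{C}}^{(1)}\oplus\overline{\mathcal{C}}),\,D\bigr),$$
modelled on the normalized chains of $\Delta^{1}$: the two desuspended copies $s^{-1}\overline{\mathcal{C}}^{(0)},\,s^{-1}\overline{\mathcal{C}}^{(1)}$ carry the cobar differential and account for the cofibration $B^c(\mathcal{C})\sqcup B^c(\mathcal{C})\hookrightarrow\text{\normalfont Cyl}(B^c(\mathcal{C}))$, the remaining copy $\overline{\mathcal{C}}$ furnishes the interval generators, and the fold-and-collapse morphism $\text{\normalfont Cyl}(B^c(\mathcal{C}))\to B^c(\mathcal{C})$ is required to be a weak equivalence. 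The restriction of $D$ to the interval generators is the positive-characteristic substitute for the classical homotopy formula: whereas over a field of characteristic $0$ one integrates along $\Omega^{\bullet}(\Delta^{1})$, here $D$ is expressed through the weighted braces and the circular product $\circledcirc$. A morphism $\text{\normalfont Cyl}(B^c(\mathcal{C}))\to\mathcal{P}$ then amounts to a triple $(\alpha_{0},\alpha_{1},\mu)$ with $\alpha_{0},\alpha_{1}\in\text{\normalfont MC}(\mathfrak{g})$ and $\mu\in\mathfrak{g}^{0}$, and compatibility with $D$ unwinds --- and this is where Theorem~\ref{theoremA} is used in full --- exactly to the relation $\alpha_{1}=(1+\mu)\cdot\alpha_{0}$; conversely every gauge equivalence produces such a triple. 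Over a field of characteristic $0$ this recovers the description obtained from \cite{dotsenko}.

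The main obstacle is precisely the construction and verification of this cylinder object: one must exhibit a differential $D$ with $D^{2}=0$ for which the collapse morphism is an arity-wise quasi-isomorphism, and one must check that the condition it imposes on triples $(\alpha_{0},\alpha_{1},\mu)$ is exactly the gauge action formula of Theorem~\ref{theoremA}. Both amount to transporting the $\Omega^{\bullet}(\Delta^{1})$-coefficients of the characteristic-$0$ argument into identities among weighted braces, and the fact that this can be done is, in essence, Theorem~\ref{theoremA} itself --- that $\circledcirc$ is a group law on $1+\mathfrak{g}^{0}$ acting on $\text{\normalfont MC}(\mathfrak{g})$ --- together with the convergence coming from the $\Sigma_{*}$-cofibrancy and (co)augmentation hypotheses on $\mathcal{C}$ and $\mathcal{P}$.
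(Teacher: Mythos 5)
Your overall strategy is the one the paper follows: reduce $\pi_{0}\text{\normalfont Map}(B^c(\mathcal{C}),\mathcal{P})$ to operad morphisms modulo the homotopy relation computed with a cylinder object on the cofibrant operad $B^c(\mathcal{C})$, identify $\text{\normalfont Mor}(B^c(\mathcal{C}),\mathcal{P})$ with $\mathcal{MC}(\text{\normalfont Hom}_\Sigma(\overline{\mathcal{C}},\overline{\mathcal{P}}))$, and then match homotopies out of a cylinder with interval generators $\overline{\mathcal{C}}$ against gauge equivalences. The cylinder you sketch, $\mathcal{F}(s^{-1}\overline{\mathcal{C}}^{(0)}\oplus s^{-1}\overline{\mathcal{C}}^{(1)}\oplus\overline{\mathcal{C}})$ modelled on the cellular chains of $\Delta^1$, is exactly the one the paper uses, namely $(\mathcal{F}(K\otimes\overline{\mathcal{C}}),\partial)$ with $K=\mathbb{K}\sigma^{0}\oplus\mathbb{K}\sigma^{1}\oplus\mathbb{K}\sigma^{01}$.

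The gap is that the step you yourself flag as ``the main obstacle'' is the actual content of the proof, and your proposal neither resolves it nor correctly identifies where its resolution comes from. First, the existence of the differential $D$ with $D^{2}=0$, integrally defined and making the collapse map a quasi-isomorphism, is not something you need to (or should) construct from scratch: it is Fresse's explicit cylinder for the cobar construction \cite[$\mathsection$5.1]{fressecyl}, which is built from the chain interval $K$ and works over any ring; the paper simply cites it. Your suggestion that one obtains $D$ by ``transporting the $\Omega^{\bullet}(\Delta^{1})$-coefficients of the characteristic-$0$ argument'' points in the wrong direction in positive characteristic. Second, and more importantly, the assertion that the identification of $D$-compatibility with the gauge formula ``is, in essence, Theorem~\ref{theoremA} itself'' is not a proof and is essentially circular: Theorem~\ref{theoremA} establishes the algebraic gauge action, but says nothing about the operadic differential $\partial$ on the cylinder. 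What must actually be checked --- and what the paper does check --- is the computation
$$d(h)(\sigma^{01}\otimes\gamma)=d(\lambda)(\gamma)-\alpha(\gamma)-\lambda\{\alpha\}_{1}(\gamma)+\beta\circledcirc(1+\lambda)(\gamma),$$
where $h(\sigma^{0}\otimes\gamma)=\alpha(\gamma)$, $h(\sigma^{1}\otimes\gamma)=\beta(\gamma)$, $h(\sigma^{01}\otimes\gamma)=\lambda(\gamma)$; this requires unwinding Fresse's formula for $\partial(\sigma^{01}\otimes\gamma)$ and recognizing the resulting operadic composites as the weighted brace $\lambda\{\alpha\}_{1}$ and the circular product $\beta\circledcirc(1+\lambda)$ of the convolution $\Gamma(\mathcal{P}re\mathcal{L}ie,-)$-algebra (which is why the paper first proves the lemma expressing $\overline{f}\{\overline{g}\}_{k}$ via $\Delta_{(k)}$ and $\gamma_{(k)}$, and the theorem expressing $f\circledcirc g$ as $\gamma\circ(f\circ g)\circ\Delta$). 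Without that computation the equivalence between homotopy and gauge equivalence is only asserted, not proved.
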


This theorem gives a first step for the calculation of the homotopy groups of a mapping space $\text{\normalfont Map}(B^c (\mathcal{C}),\mathcal{P})$ over any field.\\

In the first part of this paper, we recall some definitions and properties on pre-Lie algebras and pre-Lie algebras with divided powers: in $\mathsection$\ref{sec:11} we briefly review the definition of the notion of a pre-Lie algebra and the construction of the corresponding operad; in $\mathsection$\ref{sec:12}, we review the definition of a pre-Lie algebra with divided powers and of the weighted brace operations.\\ \indent In the second part, we develop the deformation theory for differential graded pre-Lie algebras with divided powers: in $\mathsection$\ref{sec:21}, we study pre-Lie algebras with divided powers in the dg framework; in $\mathsection$\ref{sec:22}, we define the circular product and prove assertion $(i)$ of Theorem \ref{theoremA}; in $\mathsection$\ref{sec:23}, we define the Maurer-Cartan set and prove assertion $(ii)$ of Theorem \ref{theoremA}; in $\mathsection$\ref{sec:24}, we finally prove our analogue of the Goldman-Millson theorem (Theorem \ref{theoremB}) for this new deformation theory.\\ \indent We conclude this article with our application of this deformation theory for operadic deformation problems: in $\mathsection$\ref{sec:31}, we introduce some basic definitions on symmetric sequences and operads which will be useful to write our formulas; in $\mathsection$\ref{sec:32}, we study the structure of differential graded pre-Lie algebra with divided powers of the convolution operad; in $\mathsection$\ref{sec:33}, we finally give a proof of Theorem \ref{theoremC}.

\subsubsection*{Conventions}

We denote the symmetric group on $n$ letters by $\Sigma_n$. Recall that a permutation $\sigma\in\Sigma_{r_1+\cdots+r_n}$ is a \textit{shuffle permutation of type $(r_1,\ldots,r_n)$} if $\sigma$ preserves the order on the subsets $\{r_1+\cdots+r_i+1<\cdots<r_1+\cdots+r_{i+1}\}$ of $\{1<\cdots<r_1+\cdots+r_n\}$. The shuffle permutation $\sigma$ is \textit{pointed} if we also have $\sigma(1)<\sigma(r_1+1)<\cdots<\sigma(r_1+\cdots+r_{n-1}+1)$.\\ We denote by $Sh(r_1,\ldots,r_n)$ the subset of $\Sigma_{r_1+\cdots+r_n}$ composed of shuffle permutations of type $(r_1,\ldots,r_n)$ and by $Sh_\ast(r_1,\ldots,r_n)$ the subset of $Sh(r_1,\ldots,r_n)$ composed of pointed shuffle permutations.

\subsubsection*{Acknowledgements}

I acknowledge support from the Labex CEMPI (ANR-11-LABX-0007-01) and from the FNS-ANR project OCHoTop (ANR-18CE93-0002-01).\\ I am grateful to Benoit Fresse for his advice and support throughout the writing of this article, as well as to the anonymous referee for her/his comments and suggestions, which greatly contributed to the improvement of this work.

\section{Recollections on pre-Lie algebras with divided powers}



We first recall some definitions and basic properties on pre-Lie algebras and pre-Lie algebras with divided powers. Pre-Lie algebras were introduced in deformation theory by Gerstenhaber in \cite{gerstenhaber}, while pre-Lie algebras with divided powers were introduced by Cesaro in \cite{cesaro}.\\

In $\mathsection$\ref{sec:11}, we give brief recollections on the notion of a pre-Lie algebra. We will more particularly see pre-Lie algebras as algebras over an operad introduced by Chapoton-Livernet in \cite{chapoton}, the rooted tree operad, of which we also recall the definition in this subsection.\\

In $\mathsection$\ref{sec:12}, we give recollections on the notion of a pre-Lie algebra with divided powers. These objects can be seen as pre-Lie algebras with some extra operations. We will focus on some of these operations called weighted braces that will mimic the quantities which appear in the definition of the circular product.

\subsection{Pre-Lie algebras and the rooted tree operad}\label{sec:11}

We will use the following basic definitions.

\begin{defi}\label{prelie}
    A {\normalfont pre-Lie algebra} over a ring $\mathbb{K}$ is a $\mathbb{K}$-module $L$ endowed with a bilinear morphism $\star:L\otimes L\longrightarrow L$ such that
    $$(x\star y)\star z-x\star (y\star z)=(x\star z)\star y-x\star (z\star y).$$
\end{defi}

The category of pre-Lie algebras is isomorphic to the category of symmetric braces algebras (see \cite{oudom} or \cite{lada}). The symmetric braces $-\{-,\ldots,-\}$ are defined by induction on the length of the brace by

$\begin{array}{rrl}
    a\{\} & = & a,\\
    a\{b_{1}\} & = & a\star b_{1},\\
    \forall n\geq 1, a\{b_{1},\ldots,b_{n}\} & = & \displaystyle a\{b_{1},\ldots,b_{n-1}\}\{b_{n}\}-\sum_{i=1}^{n-1}a\{b_{1},\ldots,b_{i-1},b_{i}\{b_{n}\}, b_{i+1},\ldots,b_{n-1}\},
\end{array}$

\noindent for all $a,b_{1},\ldots,b_{n}\in L$.\\

For our purpose, it will be more convenient to see pre-Lie algebras as algebras over an operad. This operad can be described in terms of rooted trees as follows.

\begin{defi} {\normalfont (see \cite[$\mathsection$1.5]{chapoton})}
    We call {\normalfont $n$-rooted tree} a non-planar tree with $n$ vertices equipped with a numbering from $1$ to $n$, together with a distinguished vertex called the {\normalfont root}. By convention, we choose to put the root at the bottom in any representation of a tree.

We let $\mathcal{RT}(n)$ to be the set of all trees with $n$ vertices, and $\mathcal{P}re\mathcal{L}ie(n)=\mathbb{K}[\mathcal{RT}(n)]$.
\end{defi}

The collection $\mathcal{P}re\mathcal{L}ie$ is endowed with an operad structure. The action of $\Sigma_{n}$ on $\mathcal{P}re\mathcal{L}ie(n)$ for all $n\geq 1$ is given by the permutation of the indices attached to the vertices. The $i$-th partial composition $S\circ_{i} T\in\mathcal{P}re\mathcal{L}ie(p+q-1)$ of $S\in \mathcal{RT}(p)$ and $T\in \mathcal{RT}(q)$ is given by the sum of all the possible trees obtained by putting $T$ in the vertex $i$ of $S$, with the obvious choice of the numbering (see an example in \cite[$\mathsection$1.5]{chapoton}). This operad is also called the \textit{rooted tree operad}.\\

One can show that the algebras over the rooted tree operad are precisely the pre-Lie algebras (see \cite[$\mathsection$1.9]{chapoton}). In particular, the symmetric braces are given by the trees $F_n$ for $n\geq 0$ called \textit{corollas with $n$ leaves}:

\begin{equation*}
    F_{n}=
    \begin{tikzpicture}[baseline={([yshift=-.5ex]current bounding box.center)},scale=0.6]
    \node[draw,circle,scale=0.6] (i) at (0,0) {$1$};
    \node[draw,circle,scale=0.6] (1) at (-1,1) {$2$};
    \node[draw,circle,scale=0.6] (2) at (0,1) {$3$};
    \node[draw,circle,scale=0.6] (3) at (2,1) {$n+1$};
    \node (a) at (0.9,1) {$\cdots$};
    \draw (2) -- (i);
    \draw (i) -- (1);
    \draw (i) -- (3);
    \end{tikzpicture}.\ \
\end{equation*}

\subsection{Pre-Lie algebras with divided powers}\label{sec:12}

In this part, we recall the notion of a pre-Lie algebra with divided powers. We obtain this definition as a particular case of a general construction, for algebras over an operad, which we briefly recall.\\

Every operad $\mathcal{P}$ on a suitable monoidal category $C$ gives a functor $\mathcal{S}(\mathcal{P},-):C\longrightarrow C$, called the \textit{Schur functor}, defined by
$$\mathcal{S}(\mathcal{P},V)=\bigoplus_{n\geq 0} \mathcal{P}(n)\otimes_{\Sigma_n}V^{\otimes n},$$

\noindent where we consider, in the direct sum, the coinvariants of $\mathcal{P}(n)\otimes V^{\otimes n}$ under the diagonal action of $\Sigma_n$ given by its action on $\mathcal{P}(n)$ and its action by permutation on the tensor product $V^{\otimes n}$. The image of a tensor product $p\otimes v_1\otimes \cdots\otimes v_n\in\mathcal{P}(n)\otimes V^{\otimes n}$ in $\mathcal{P}(n)\otimes_{\Sigma_n}V^{\otimes n}$ will be denoted by $p(v_1,\ldots,v_n)$. The Schur functor defines a monad and the category of algebras over this monad is the usual category of algebras over the operad $\mathcal{P}$. In particular, pre-Lie algebras in the sense of Definition \ref{prelie} are identified with $\mathcal{S}(\mathcal{P}re\mathcal{L}ie,-)$-algebras.\\

In the above definition, one can chose to take invariants instead of coinvariants. We obtain a new functor $\Gamma(\mathcal{P},-):C\longrightarrow C$ defined by
$$\Gamma(\mathcal{P},V)=\bigoplus_{n\geq 0}\mathcal{P}(n)\otimes^{\Sigma_n}V^{\otimes n}.$$

\noindent If $\mathcal{P}(0)=0$, this functor also gives a monad (see \cite[$\mathsection$1.1.18]{fressebis}). The algebras over this monad are called \textit{$\mathcal{P}$-algebras with divided powers}. The motivation for this terminology comes from the fact that $\Gamma(\mathcal{C}om,-)$-algebras are precisely the usual commutative and associative algebras over $\mathbb{K}$ with divided powers.\\

Note that if $C$ is a category whose objects are $\mathbb{K}$-modules, then we have a morphism of monads $Tr:\mathcal{S}(\mathcal{P},V)\longrightarrow\Gamma(\mathcal{P},V)$ called the \textit{trace map} and defined by
$$Tr(p(v_1,\ldots,v_n))=\sum_{\sigma\in\Sigma_n}(\sigma\cdot p) \otimes v_{\sigma^{-1}(1)}\otimes\cdots\otimes v_{\sigma^{-1}(n)}.$$

\noindent 
If $\mathbb{K}$ is a field of characteristic $0$, the trace map is an isomorphism. It is no longer the case in general when $char(\mathbb{K})\neq 0$.\\

In the case $C=\text{\normalfont Mod}_\mathbb{K}$ of the category of $\mathbb{K}$-modules and $\mathcal{P}=\mathcal{P}re\mathcal{L}ie$, if $V$ is a free $\mathbb{K}$-module, we however have an isomorphism of modules given by the \textit{orbit morphism} $\mathcal{O}:\mathcal{S}(\mathcal{P}re\mathcal{L}ie,V)\longrightarrow\Gamma(\mathcal{P}re\mathcal{L}ie,V)$ defined as follows. Let $n\geq 1$ and $\mathfrak{t}\in\mathcal{P}re\mathcal{L}ie(n)\otimes V^{\otimes n}$ be a basis element. We set
$$\mathcal{O}(t)=\sum_{\sigma\in\Sigma_n/\text{\normalfont Stab}_{\Sigma_n}(t)}\sigma\cdot \mathfrak{t},$$

\noindent where ${\normalfont \text{Stab}}_{\Sigma_n}(\mathfrak{t})$ is the stabilizer of $\mathfrak{t}$ under the diagonal action of $\Sigma_{n}$ on $\mathcal{P}re\mathcal{L}ie(n)\otimes V^{\otimes n}$. The map $\mathcal{O}$ is then extended by linearity on $\mathcal{P}re\mathcal{L}ie(n)\otimes V^{\otimes n}$.



\begin{thm}\label{Cesaro}\text{\normalfont (A. Cesaro, \cite{cesaro})}
    Every pre-Lie algebra with divided powers $L$ comes equipped with operations $-\{-,\ldots,-\}_{r_1,\ldots,r_n}:L^{\times n+1}\longrightarrow L$ called \text{\normalfont weighted braces} which satisfy the following identities:
\begin{enumerate}[(i)]
    \item $x\{y_{\sigma(1)},\ldots,y_{\sigma(n)}\}_{r_{\sigma(1)},\ldots,r_{\sigma(n)}}=x\{y_{1},\ldots,y_{n}\}_{r_{1},\ldots,r_{n}},$
    \item $x\{y_{1},\ldots,y_{i-1},y_{i},y_{i+1},\ldots,y_{n}\}_{r_{1},\ldots,r_{i-1},0,r_{i+1},\ldots,r_{n}}=x\{y_{1},\ldots,y_{i-1},y_{i+1},\ldots,y_{n}\}_{r_{1},\ldots,r_{i-1},r_{i+1},\ldots,r_{n}},$
    \item $x\{y_{1},\ldots,\lambda y_{i},\ldots,y_{n}\}_{r_{1},\ldots,r_{i},\ldots,r_{n}}=\lambda^{r_{i}}x\{y_{1},\ldots,y_{i},\ldots,y_{n}\}_{r_{1},\ldots,r_{i},\ldots,r_{n}},$
    \item $\displaystyle x\{y_{1},\ldots,y_{i},y_{i},\ldots,y_{n}\}_{r_{1},\ldots,r_{i},r_{i+1},\ldots,r_{n}}=\binom{r_{i}+r_{i+1}}{r_{i}}x\{y_{1},\ldots,y_{i},\ldots,y_{n}\}_{r_{1},\ldots,r_{i-1},r_{i}+r_{i+1},r_{i+2},\ldots,r_{n}},$
    \item $\displaystyle x\{y_{1},\ldots,y_{i}+\widetilde{y_{i}},\ldots,y_{n}\}_{r_{1},\ldots,r_{i},\ldots,r_{n}}=\sum_{s=0}^{r_{i}}x\{y_{1},\ldots,y_{i},\widetilde{y_{i}},\ldots,y_{n}\}_{r_{1},\ldots,s,r_{i}-s,\ldots,r_{n}},$
    \item $x\{y_{1},\ldots,y_{n}\}_{r_{1},\ldots,r_{n}}\{z_{1},\ldots,z_{m}\}_{s_{1},\ldots,s_{m}}=$
$$\sum_{s_{i}=\beta_{i}+\sum\alpha_{i}^{\bullet,\bullet}}\frac{1}{\prod_{j}(r_{j})!}x\{y_{1}\{z_{1},\ldots,z_{m}\}_{\alpha_{1}^{1,1},\ldots,\alpha_{m}^{1,1}},\ldots,y_{1}\{z_{1},\ldots,z_{m}\}_{\alpha_{1}^{1,r_{1}},\ldots,\alpha_{m}^{1,r_{1}}},$$
$$\ldots,y_{n}\{z_{1},\ldots,z_{m}\}_{\alpha_{1}^{n,1},\ldots,\alpha_{m}^{n,1}},\ldots,y_{n}\{z_{1},\ldots,z_{m}\}_{\alpha_{1}^{n,r_{n}},\ldots,\alpha_{m}^{n,r_{n}}},z_{1},\ldots,z_{m}\}_{1,\ldots,1,\beta_{1},\ldots,\beta_{m}},$$
\end{enumerate}

\noindent for all $n,m\geq 0$, $r_{1},\ldots,r_{n},s_{1},\ldots,s_{m}\geq 0$, $1\leq i\leq n$, $\sigma\in\Sigma_{n}$, $\lambda\in\mathbb{K}$ and $x,y_{1},\ldots,y_{n},z_{1},\ldots,z_{m}\in L$.
\end{thm}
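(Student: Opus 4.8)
The plan is to deduce the graded/weighted brace identities from the corresponding facts about the operad $\mathcal{P}re\mathcal{L}ie$ and the orbit morphism $\mathcal{O}:\mathcal{S}(\mathcal{P}re\mathcal{L}ie,V)\to\Gamma(\mathcal{P}re\mathcal{L}ie,V)$, following the strategy of Cesaro \cite{cesaro}. First I would pin down the elements of $\mathcal{P}re\mathcal{L}ie$ that represent the weighted braces: the unweighted brace $x\{y_1,\dots,y_n\}$ corresponds to a universal tree (a sum of trees in $\mathcal{RT}(n+1)$ built out of the corollas $F_n$ via partial composition and the pre-Lie relation), and the weighted brace $x\{y_1,\dots,y_n\}_{r_1,\dots,r_n}$ is defined to be the image under $\mathcal{O}$ of the element of $\mathcal{S}(\mathcal{P}re\mathcal{L}ie, V)$ obtained by plugging the argument $x$ once and each $y_i$ exactly $r_i$ times into that universal element, where $V$ is the free $\mathbb{K}$-module on the symbols $x,y_1,\dots,y_n$. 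Because $\mathcal{O}$ is an isomorphism of $\mathbb{K}$-modules on free modules, the weighted braces are well-defined operations on any $\Gamma(\mathcal{P}re\mathcal{L}ie,-)$-algebra by evaluating this universal element via the structure map, and by naturality they satisfy exactly the identities that the universal elements satisfy.

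With that setup, each of the six identities reduces to a combinatorial statement about $\mathcal{O}$ and about sums over shuffles of trees, which I would prove in turn. Identity (i) (symmetry in the $y_i$) is immediate because permuting equal-weight blocks of inputs merely relabels the summation in the definition of $\mathcal{O}$. Identity (ii) (dropping a weight-zero input) holds because an argument appearing zero times simply does not occur. Identity (iii) (scaling an input) follows because $y_i$ occurs $r_i$ times linearly in the universal element, so rescaling $y_i$ pulls out $\lambda^{r_i}$. Identity (iv), the binomial coefficient when two \emph{equal} inputs are merged, comes from counting: the orbit sum over $\mathrm{Stab}$ for two identical blocks of sizes $r_i$ and $r_{i+1}$ overcounts the merged block of size $r_i+r_{i+1}$ by exactly $\binom{r_i+r_{i+1}}{r_i}$. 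Identity (v), additivity in an input, is the ``divided-power binomial theorem'': expanding $(y_i+\widetilde{y_i})^{\otimes r_i}$ and grouping terms by how many tensor factors equal $y_i$ gives the sum over $s$, with the multinomial coefficients that would appear in the coinvariant picture being exactly absorbed by the passage to the invariant/orbit picture — this is where the divided-power bookkeeping is essential. Finally identity (vi), the composition/associativity formula, is the substitution of one universal brace element into another inside the operad $\mathcal{P}re\mathcal{L}ie$: composing braces corresponds to grafting the universal trees, and expanding the grafting and re-sorting the leaves by which block they land in yields the stated sum over the splittings $s_i=\beta_i+\sum_{\bullet}\alpha_i^{\bullet,\bullet}$; the factor $\prod_j (r_j)!^{-1}$ appears because each of the $r_j$ copies of $y_j$ is distinguished in $\mathcal{S}$ but not in $\Gamma$, so dividing by $\prod_j (r_j)!$ corrects the overcount introduced when we pass the composite back through $\mathcal{O}$.

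The main obstacle is identity (vi): one must carefully track, through the orbit morphism, the interaction between (a) the stabilizers of the inner brace elements (the $y_i$-blocks and $z_j$-blocks), (b) the stabilizers of the outer brace element, and (c) the new identifications created by grafting, in order to see that all the combinatorial factors collapse to precisely $\prod_j (r_j)!^{-1}$ with no leftover coefficients. A clean way to organize this is to first establish the whole package of identities in the free case over $\mathbb{Q}$ (or over a field of characteristic $0$), where $\mathcal{O}$ and the trace map are isomorphisms and the weighted braces literally equal $\frac{1}{\prod_i r_i!}$ times iterated ordinary braces, so the identities are classical symmetric-brace computations; then observe that both sides of each identity, as written in terms of the universal elements, lie in the image of the integral lattice $\Gamma(\mathcal{P}re\mathcal{L}ie,\mathbb{Z}\langle x,y_i,z_j\rangle)$ inside its rationalization, so an identity valid rationally on that lattice is valid over $\mathbb{Z}$, hence over any $\mathbb{K}$ by base change; finally transport along the structure map to an arbitrary $\Gamma(\mathcal{P}re\mathcal{L}ie,-)$-algebra. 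This is exactly the scheme-of-proof that gives Cesaro's \cite[Propositions 5.9--5.10]{cesaro}, and I would simply cite those propositions for the detailed verification rather than reproduce the tree bookkeeping here.
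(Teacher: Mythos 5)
Your proposal is correct and follows essentially the same route as the paper: the paper likewise defines the weighted braces as the image under the orbit morphism $\mathcal{O}$ of the corolla $F_{\sum_i r_i}$ evaluated on a free module of formal variables (see the Construction following the theorem), and it does not reprove the identities but refers to Cesaro's Propositions 5.9--5.10, exactly as you do. Your additional sketch of the combinatorics (stabilizer indices giving the binomial coefficient in (iv), the divided-power expansion for (v), and the rationalize-then-descend-to-$\mathbb{Z}$ argument for (vi) after clearing the $\prod_j(r_j)!$ denominators via the other identities) is consistent with how the paper handles the analogous verifications elsewhere, e.g.\ in the proof of Proposition \ref{symopealg}.
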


Note that the formula $(vi)$ is written in a form that uses fractions for more convenience, but can be reduced to $\mathbb{Z}$ using the other formulas. The process works as follows. Let $i$ such that $1\leq i\leq n$. In the sum, we first fix $\beta_{1},\ldots,\beta_{m}$ and $\alpha_{j}^{p,q}$ for $1\leq j\leq m$, $1\leq q\leq r_{j}$ and $p\neq i$. We obtain a sum with $(\alpha_{1}^{i,1},\ldots,\alpha_{m}^{i,1},\ldots,\alpha_{1}^{i,r_{i}},\ldots,\alpha_{m}^{i,r_{i}})$ as variables. We identify this last tuple with a tuple of tuples of the form $((\alpha_{1}^{i,1},\ldots,\alpha_{m}^{i,1});\ldots;(\alpha_{1}^{i,r_{i}},\ldots,\alpha_{m}^{i,r_{i}}))$. Let $u$ be one of these tuples and suppose $u=(\underbrace{\widetilde{u_{1}},\ldots,\widetilde{u_{1}}}_{t_{1}},\ldots,\underbrace{\widetilde{u_{q}},\ldots,\widetilde{u_{q}}}_{t_{q}})$ up to permutation. Note that, if $\widetilde{u_{1}},\ldots,\widetilde{u_{q}}$ are given, we exactly have $\displaystyle\frac{r_{i}!}{t_{1}!\cdots t_{q}!}$ such terms occurring in the sum. Then, by using the symmetry formula $(i)$, the formula $(iv)$ and by summing over all such tuples, we have in the sum:
$$\frac{1}{\prod_{j}(r_{j})!}\frac{r_{i}!}{t_{1}!\cdots t_{q}!}t_{1}!\cdots t_{q}!\ x\{y_{1}\{z_{1},\ldots,z_{m}\}_{\alpha_{1}^{1,1},\ldots,\alpha_{m}^{1,1}},\ldots,y_{1}\{z_{1},\ldots,z_{m}\}_{\alpha_{1}^{1,r_{1}},\ldots,\alpha_{m}^{1,r_{1}}},\ldots,$$
$$y_{i}\{z_{1},\ldots,z_{m}\}_{\widetilde{u_{1}}},\ldots,y_{i}\{z_{1},\ldots,z_{m}\}_{\widetilde{u_{q}}},\ldots$$
$$\ldots,y_{n}\{z_{1},\ldots,z_{m}\}_{\alpha_{1}^{n,1},\ldots,\alpha_{m}^{n,1}},\ldots,y_{n}\{z_{1},\ldots,z_{m}\}_{\alpha_{1}^{n,r_{n}},...,\alpha_{m}^{n,r_{n}}},z_{1},\ldots,z_{m}\}_{1,\ldots,t_{1},\ldots,t_{q},\ldots,1,\beta_{1},\ldots,\beta_{m}},$$

\noindent where we have set $y_{i}\{z_{1},\ldots,z_{m}\}_{\widetilde{u_{k}}}=y_{i}\{z_{1},\ldots,z_{m}\}_{\alpha_{1},\ldots,\alpha_{m}}$ if $\widetilde{u_{k}}=(\alpha_{1},\ldots,\alpha_{m})$. Hence, it gives:
$$\frac{1}{\prod_{j\neq i}(r_{j})!}x\{y_{1}\{z_{1},\ldots,z_{m}\}_{\alpha_{1}^{1,1},\ldots,\alpha_{m}^{1,1}},\ldots,y_{1}\{z_{1},\ldots,z_{m}\}_{\alpha_{1}^{1,r_{1}},\ldots,\alpha_{m}^{1,r_{1}}},\ldots,$$
$$y_{i}\{z_{1},\ldots,z_{m}\}_{\widetilde{u_{1}}},\ldots,y_{i}\{z_{1},\ldots,z_{m}\}_{\widetilde{u_{q}}},\ldots$$
$$\ldots,y_{n}\{z_{1},\ldots,z_{m}\}_{\alpha_{1}^{n,1},\ldots,\alpha_{m}^{n,1}},\ldots,y_{n}\{z_{1},\ldots,z_{m}\}_{\alpha_{1}^{n,r_{n}},\ldots,\alpha_{m}^{n,r_{n}}},z_{1},\ldots,z_{m}\}_{1,\ldots,t_{1},v,t_{q},\ldots,1,\beta_{1},\ldots,\beta_{m}}.$$

By iterating this argument on the other terms, we obtain an expression over $\mathbb{Z}$.\\

The reader can find an example of such a reduction of the formula $(vi)$ in \cite[Example 5.11]{cesaro}, as well as a proof of the previous theorem (see \cite[Propositions 5.9-5.10]{cesaro}).\\

We give the explicit construction of the weighted braces.

\begin{cons}
    We regard the weighted braces $x\{y_1,\ldots,y_n\}_{r_1,\ldots,r_n}$ as the action of the corolla $F_{\sum_i r_i}$ on the tensor $x\otimes\underbrace{y_1\otimes\cdots\otimes y_1}_{r_1}\otimes\cdots\otimes\underbrace{y_n\otimes\cdots\otimes y_n}_{r_n}$ where we regard the $y_{i}$'s as distinct variables (see below). If $y_{i}\neq y_{j}$ for all $i\neq j$, then we precisely set
    $$x\{y_1,\ldots,y_n\}_{r_1,\ldots,r_n}=\gamma(\mathcal{O}F_{\sum_i r_i}(x,\underbrace{y_1,\ldots, y_1}_{r_1},\ldots,\underbrace{y_n,\ldots, y_n}_{r_n})),$$

    \noindent where $\gamma$ is the $\Gamma(\mathcal{P}re\mathcal{L}ie,-)$-algebra structure on $L$.\\

    In order to include the case where some of the $y_i$'s might be the same, let $E_n$ to be the free $\mathbb{K}$-module generated by a basis $e,e_1,\ldots,e_n$. Let $\psi_{x,y_1,\ldots,y_n}:E_n\longrightarrow L$ be the morphism which sends $e$ to $x$ and $e_i$ to $y_i$ for all $1\leq i\leq n$. We obtain a morphism $\Gamma(\mathcal{P}re\mathcal{L}ie,\psi_{x,y_1,\ldots,y_n}):\Gamma(\mathcal{P}re\mathcal{L}ie,E_n)\longrightarrow\Gamma(\mathcal{P}re\mathcal{L}ie,L)$. We then take the orbit map at the source and apply this morphism next to have a good definition of the weighted braces.
\end{cons}

\begin{remarque}
    The converse of the previous theorem is also true, provided that $L$ is free as a $\mathbb{K}$-module.
\end{remarque}


\section{Deformation theory of $\Gamma(\mathcal{P}re\mathcal{L}ie,-)$-algebras}

The main goal of this section is to extend the results proved by Dotsenko-Shadrin-Vallette in \cite{dotsenko} in the context of a ring of positive characteristic. The main idea is that formulas which define the circular product and the gauge action can be written in terms of weighted brace operations.\\

In $\mathsection$\ref{sec:21}, we revisit the definition of pre-Lie algebras with divided powers in the dg framework. In particular, we give the analogue of the weighted brace operations. We then make explicit an example of differential graded pre-Lie algebras with divided powers given by differential graded brace algebras.\\

In $\mathsection$\ref{sec:22}, we define the circular product $\circledcirc$ in terms of weighted brace operations that will generalize the one given in \cite{dotsenko}. We then show that this induces a group called the gauge group associated to the $\Gamma(\mathcal{P}re\mathcal{L}ie,-)$-algebra.\\

In $\mathsection$\ref{sec:23}, we define the Maurer-Cartan equation in a $\Gamma(\mathcal{P}re\mathcal{L}ie,-)$-algebra, and then the Maurer-Cartan set. We also see that the gauge group acts on the Maurer-Cartan set by a similar formula given in \cite{dotsenko}.\\

In $\mathsection$\ref{sec:24}, we finally motivate this new deformation theory with an analogue of the Goldman-Millson theorem. This theorem, in particular, has the advantage to be true on integers.

\subsection{Differential graded pre-Lie algebras with divided powers}\label{sec:21}

As we are dealing with differential graded modules, our first goal is to define and study differential graded pre-Lie algebras with divided powers.\\

In the following sections, we assume that dg modules are equipped with a cohomological grading convention. We will denote by $\otimes$ the usual tensor product of graded modules over any ring $\mathbb{K}$. This induces a symmetric monoidal category that we will denote by $\text{\normalfont dgMod}_\mathbb{K}$. If there is no possible confusion, then we will denote by $\pm$ any sign produced by the Koszul sign rule.

\subsubsection{Weighted braces on $\Gamma(\mathcal{P}re\mathcal{L}ie,-)$-algebras}

Our main goal here is to extend \cite[Proposition 5.13]{cesaro} in the context of dg modules. We first begin by a basic definition.

\begin{defi}
    A {\normalfont differential graded pre-Lie algebra} is an algebra over the monad $\mathcal{S}(\mathcal{P}re\mathcal{L}ie,-):\text{\normalfont dgMod}_\mathbb{K}\longrightarrow\text{\normalfont dgMod}_\mathbb{K}$.
\end{defi}

Equivalently, we can see that a differential graded pre-Lie algebra is a graded module $ L=\bigoplus_{k\in\mathbb{Z}}L^k$ endowed with a morphism of graded modules $\star:L\otimes L\longrightarrow L$ such that
    $$(x\star y)\star z-x\star(y\star z)=\pm ((x\star z)\star y-x\star (z\star y))$$

    \noindent and a differential $d:L^k\longrightarrow L^{k+1}$, which satisfies
    $$d(x\star y)=d(x)\star y\pm x\star d(y),$$

    \noindent where $\pm$ is the sign yielded by the permutation of $x$ and $d$.\\

    We now define the notion of a pre-Lie algebra with divided powers in the dg framework.

\begin{defi}
    A {\normalfont differential graded pre-Lie algebra with divided powers} is an algebra over the monad $\Gamma(\mathcal{P}re\mathcal{L}ie,-):\text{\normalfont dgMod}_\mathbb{K}\longrightarrow \text{\normalfont dgMod}_\mathbb{K}$.
\end{defi}


Let $L\in\text{\normalfont dgMod}_\mathbb{K}$. Suppose that $L^k$ is a free $\mathbb{K}$-module for every $k\in\mathbb{Z}$. Let $\mathcal{L}$ be a basis of $L$ composed of homogeneous elements. Then we have a basis on $\mathcal{P}re\mathcal{L}ie(n)\otimes L^{\otimes n}$ for every $n\geq 1$ given by tensors $T\otimes e_1\otimes\cdots\otimes e_n$ where $T\in \mathcal{RT}(n)$ and $e_1,\ldots,e_n\in\mathcal{L}$. We denote by $\mathcal{RT}(n)\otimes\mathcal{L}^{\otimes n}$ this basis. If $char(\mathbb{K})\neq 2$, then the action of $\Sigma_n$ on $\mathcal{P}re\mathcal{L}ie(n)\otimes L^{\otimes n}$ does not restrict to an action on $\mathcal{RT}(n)\otimes\mathcal{L}^{\otimes n}$ because of the Koszul sign rule. To handle things properly, we put this sign apart. We endow $\mathcal{RT}(n)\otimes\mathcal{L}^{\otimes n}$ with the diagonal action of $\Sigma_n$ where $\Sigma_n$ acts on $\mathcal{L}^{\otimes n}$ by the permutation of elements where we omit the Koszul sign. Given a tensor $\mathfrak{t}\in\mathcal{RT}(n)\otimes\mathcal{L}^{\otimes n}$, we denote by $X_\mathfrak{t}$ the orbit of $\mathfrak{t}$ under this action so that we have the following equality of graded $\mathbb{K}$-modules:
$$\mathcal{P}re\mathcal{L}ie(n)\otimes L^{\otimes n}=\bigoplus_{\overline{\mathfrak{t}}\in(\mathcal{RT}(n)\otimes\mathcal{L}^{\otimes n})/\Sigma_n}\mathbb{K}[X_\mathfrak{t}].$$

\noindent We recover the Koszul sign rule in the following way. Let $\mathfrak{t}\in\mathcal{RT}(n)\otimes\mathcal{L}^{\otimes n}$. For every $\sigma\in\Sigma_n$, we define $\varepsilon(\sigma,\mathfrak{t})\in\{\pm 1\}\subset\mathbb{K}^\times$ as the Koszul sign which appears after the usual action of $\sigma$ on $\mathfrak{t}$ in the graded module $\mathcal{P}re\mathcal{L}ie(n)\otimes L^{\otimes n}$. For every $\sigma,\tau\in\Sigma_n$, we have the identity
$$\varepsilon(\sigma\tau,\mathfrak{t})=\varepsilon(\sigma,\tau\cdot\mathfrak{t})\varepsilon(\tau,\mathfrak{t}).$$

\noindent Equivalently, we can see $\varepsilon$ as a functor from the groupoid with $X_\mathfrak{t}$ as set of objects and $\text{\normalfont Hom}(\mathfrak{t}',\mathfrak{t}'')=\{\sigma\in\Sigma_n\ |\ \sigma\cdot\mathfrak{t}'=\mathfrak{t}''\}$ for $\mathfrak{t}',\mathfrak{t}''\in X_\mathfrak{t}$ to the groupoid, denoted by $\{\pm 1\}$, with only one object $\ast$ and $\text{\normalfont Hom}(\ast,\ast)=\{\pm 1\}\subset\mathbb{K}^\times$. We can then define the $\Sigma_n$-representation $\mathbb{K}[X_\mathfrak{t}]^\pm$ as $\mathbb{K}[X_\mathfrak{t}]$ endowed with the action of $\Sigma_n$ defined by
$$\sigma\cdot x^\pm=\varepsilon(\sigma,x)(\sigma\cdot x)^\pm$$

\noindent for every $\sigma\in\Sigma_n$ and $x\in X_\mathfrak{t}$. We thus have the following identity of $\Sigma_n$-representations:
$$\mathcal{P}re\mathcal{L}ie(n)\otimes L^{\otimes n}=\bigoplus_{\overline{\mathfrak{t}}\in(\mathcal{RT}(n)\otimes\mathcal{L}^{\otimes n})/\Sigma_n}\mathbb{K}[X_\mathfrak{t}]^\pm.$$

\noindent Our purpose it to define an analogue of the orbit map, by using the above decomposition in set-theoretic orbits. We rely on the following lemma.

\begin{lm}\label{technique}
    Let $G$ be a group and $H\subset G$ be a subgroup. We consider the action of $G$ on $G/H$ by the left translation.  Let $X$ be the groupoid with as set of objects $G/H$ and with as morphisms $\text{\normalfont Hom}(x,x')=\{g\in G\ |\ g\cdot x=x'\}$. Let $\varepsilon:X\longrightarrow\{\pm1\}\subset\mathbb{K}^\times$ be a functor. We denote by $\varepsilon(g,x)$ the image of the morphism $g:x\longrightarrow g\cdot x$ under this functor. Consider the $G$-representation $\mathbb{K}[G/H]^\pm=\mathbb{K}[G/H]$ on which $G$ acts by $g\cdot x^\pm=\varepsilon(g,x)(g\cdot x)^\pm$ for every $x\in G/H$. For every $g\in G$, we denote by $\overline{g}$ its class in $G/H$ and by $[\overline{g}^\pm]$ the class of $\overline{g}^\pm\in\mathbb{K}[G/H]^\pm$ in $(\mathbb{K}[G/H]^\pm)_G$.
    \begin{itemize}
        \item If there exists $h\in H$ such that $\varepsilon(h,\overline{1})\neq 1$, then
        $$(\mathbb{K}[G/H]^\pm)_G=(\mathbb{K}/2\mathbb{K})[[\overline{1}^\pm]]\ ;\ (\mathbb{K}[G/H]^\pm)^G=\text{\normalfont Tor}_2(\mathbb{K})\left[\sum_{\overline{g}\in G/H}\varepsilon(g,\overline{1})\overline{g}^\pm\right],$$

        \noindent where $\text{\normalfont Tor}_2(\mathbb{K})$ denotes the set of $2$-torsion elements of $\mathbb{K}$.
        \item Otherwise, we have the identities
        $$(\mathbb{K}[G/H]^\pm)_G=\mathbb{K}[[\overline{1}^\pm]]\ ;\ (\mathbb{K}[G/H]^\pm)^G=\mathbb{K}\left[\sum_{\overline{g}\in G/H}\varepsilon(g,\overline{1})\overline{g}^\pm\right].$$
    \end{itemize}
\end{lm}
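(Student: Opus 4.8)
The plan is to reduce the statement to one elementary observation about $\varepsilon$. Write $\overline{1}\in G/H$ for the trivial coset. Since $h'\cdot\overline{1}=\overline{1}$ for every $h'\in H$, the functoriality of $\varepsilon$ (the cocycle identity $\varepsilon(gg',x)=\varepsilon(g,g'\cdot x)\,\varepsilon(g',x)$, together with $\varepsilon(1,x)=1$ and $\varepsilon(g,x)^{2}=1$, all of which hold formally since $\varepsilon$ takes values in the group $\{\pm1\}$) shows that $\chi\colon H\to\{\pm1\}\subseteq\mathbb{K}^{\times}$, $\chi(h)=\varepsilon(h,\overline{1})$, is a group homomorphism. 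The dichotomy in the lemma is exactly ``$\chi$ nontrivial'' versus ``$\chi$ trivial'': in the first case some $\chi(h)=-1$, so the ideal $I:=\bigl(\chi(h)-1 : h\in H\bigr)\subseteq\mathbb{K}$ equals $(2)$, whence $\mathbb{K}/I=\mathbb{K}/2\mathbb{K}$ and $\{c\in\mathbb{K}:Ic=0\}=\text{\normalfont Tor}_2(\mathbb{K})$; in the second case $I=0$. So it suffices to establish, uniformly, that $(\mathbb{K}[G/H]^{\pm})_G\cong(\mathbb{K}/I)\,[\overline{1}^{\pm}]$ and that $(\mathbb{K}[G/H]^{\pm})^{G}=\{c\in\mathbb{K}:Ic=0\}\left[\sum_{\overline{g}\in G/H}\varepsilon(g,\overline{1})\,\overline{g}^{\pm}\right]$. (Conceptually $\mathbb{K}[G/H]^{\pm}$ is the induced $\mathbb{K}[G]$-module $\mathbb{K}[G]\otimes_{\mathbb{K}[H]}\mathbb{K}_{\chi}$, from which both claims are immediate; but I would argue directly, as the data is given combinatorially.)

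For the coinvariants, transitivity of the $G$-action on $G/H$ makes $(\mathbb{K}[G/H]^{\pm})_G$ cyclic on $[\overline{1}^{\pm}]$: choosing for each $x\in G/H$ an element $g_x$ with $g_x\cdot\overline{1}=x$, the defining relation $g_x\cdot\overline{1}^{\pm}-\overline{1}^{\pm}$ forces $x^{\pm}\equiv\varepsilon(g_x,\overline{1})\,\overline{1}^{\pm}$. Feeding these back into an arbitrary defining relation $g\cdot x^{\pm}-x^{\pm}$ and using $\varepsilon(gg_x,\overline{1})=\varepsilon(g,x)\,\varepsilon(g_x,\overline{1})$ with $\varepsilon(g,x)^{2}=1$, one checks that it is already a consequence of the relations with $x=\overline{1}$ and $g=h\in H$, which read $(\chi(h)-1)\,\overline{1}^{\pm}\in N$ ($N$ being the submodule of relations). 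Thus the surjection $\mathbb{K}\to(\mathbb{K}[G/H]^{\pm})_G$, $c\mapsto c\,[\overline{1}^{\pm}]$, has kernel exactly $I$: the inclusion $I\subseteq\ker$ is the last sentence, and for the reverse inclusion one exhibits the $G$-invariant $\mathbb{K}$-linear map $\mathbb{K}[G/H]^{\pm}\to\mathbb{K}/I$, $x^{\pm}\mapsto\varepsilon(g_x,\overline{1})\bmod I$ (well defined because replacing $g_x$ by $g_xh$ multiplies by $\chi(h)\equiv1$), which carries $[\overline{1}^{\pm}]$ to $1$. This gives the first formula.

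For the invariants, write a general element as a finite sum $v=\sum_{x\in G/H}c_x\,x^{\pm}$ and expand $\gamma\cdot v=v$ for all $\gamma\in G$; comparing coefficients rewrites $G$-invariance as the system $c_{\gamma\cdot x}=\varepsilon(\gamma,x)\,c_x$ for all $\gamma\in G$, $x\in G/H$. Taking $x=\overline{1}$ shows every coefficient is determined by $c_{\overline{1}}$, namely $c_x=\varepsilon(g_x,\overline{1})\,c_{\overline{1}}$, and requiring this to be independent of the choice of $g_x$ forces $(\chi(h)-1)\,c_{\overline{1}}=0$ for all $h$, i.e.\ $I\,c_{\overline{1}}=0$. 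Conversely, if $I\,c_{\overline{1}}=0$ the cocycle identity shows $c_x:=\varepsilon(g_x,\overline{1})\,c_{\overline{1}}$ is well defined and solves the whole system, yielding the invariant $c_{\overline{1}}\sum_{\overline{g}\in G/H}\varepsilon(g,\overline{1})\,\overline{g}^{\pm}$. Hence the second formula.

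The argument is purely formal, so I do not expect a genuine obstacle; the only points demanding care are the systematic bookkeeping of signs via $\varepsilon(gg',x)=\varepsilon(g,g'\cdot x)\,\varepsilon(g',x)$ and $\varepsilon(g,x)\in\{\pm1\}$, and the verification in the coinvariants step that no relation beyond the $H$-relations survives (equivalently, well-definedness of the auxiliary map to $\mathbb{K}/I$). Note also that $\sum_{\overline{g}\in G/H}$ in the invariants formula tacitly presupposes $[G:H]<\infty$, which is the only situation in which the lemma is applied below, $G$ being a symmetric group.
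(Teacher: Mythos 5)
Your proof is correct and follows essentially the same route as the paper's: transitivity reduces everything to the coset $\overline{1}$, and the sign character $h\mapsto\varepsilon(h,\overline{1})$ of the stabilizer is what produces the $\mathbb{K}/2\mathbb{K}$ and $\text{\normalfont Tor}_2(\mathbb{K})$ conditions. The one place you go further is the coinvariants: the paper derives only the relation $2\lambda[\overline{1}^\pm]=0$ and then asserts the answer is exactly $(\mathbb{K}/2\mathbb{K})[[\overline{1}^\pm]]$ (resp.\ $\mathbb{K}[[\overline{1}^\pm]]$ in the second case), whereas you also establish the reverse inclusion by exhibiting the well-defined $G$-invariant map $x^\pm\mapsto\varepsilon(g_x,\overline{1})\bmod I$ splitting the surjection from $\mathbb{K}/I$; this step is genuinely needed to rule out a further quotient, so your write-up is the more complete one. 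Your observation that the invariants formula tacitly assumes $[G:H]<\infty$ is likewise accurate and consistent with the only situations in which the lemma is invoked.
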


\begin{proof}
    We first compute $(\mathbb{K}[G/H]^\pm)_G$. For every $g\in G$, we have $\overline{g}^\pm=g\cdot(\varepsilon(g,\overline{1})\overline{1}^\pm)$ so that the $\mathbb{K}$-module $(\mathbb{K}[G/H]^\pm)_G$ is generated by $[\overline{1}^\pm]$. If there exists $h\in H$ such that $\varepsilon(h,\overline{1})\neq 1$ (meaning that $\varepsilon(h,\overline{1})=-1\neq 1$ in $\mathbb{K}$), then, for every $\lambda\in\mathbb{K}$, we have that $\lambda [\overline{1}^\pm]=\lambda[h\cdot\overline{1}^\pm]=-\lambda[\overline{1}^\pm]$ which shows that $(\mathbb{K}[G/H]^\pm)_G=(\mathbb{K}/2\mathbb{K})[[\overline{1}^\pm]]$. If $\varepsilon(H,\overline{1})\subset\{1\}$, then $(\mathbb{K}[G/H]^\pm)_G=\mathbb{K}[[\overline{1}^\pm]]$.\\

    We now compute $(\mathbb{K}[G/H]^\pm)^G$. Let $x=\sum_{\overline{g}\in G/H}\lambda_{\overline{g}}\overline{g}^\pm\in(\mathbb{K}[G/H]^\pm)^G$. For every $g\in G$, the identity $g\cdot x=x$ gives $\lambda_{\overline{g}}=\varepsilon(g,\overline{1})\lambda_{\overline{1}}$. If there exists $h\in H$ such that $\varepsilon(h,\overline{1})\neq 1$, then $\varepsilon(g,\overline{1})\lambda_{\overline{1}}=\lambda_{\overline{g}}=\lambda_{\overline{gh}}=\varepsilon(gh,\overline{1})\lambda_{\overline{1}}=\varepsilon(g,\overline{1})\varepsilon(h,\overline{1})\lambda_{\overline{1}}$ which gives $2\lambda_{\overline{1}}=0$. We then have $\lambda_{\overline{1}}\in\text{\normalfont Tor}_2(\mathbb{K})$ which shows that $(\mathbb{K}[G/H]^\pm)^G=\text{\normalfont Tor}_2(\mathbb{K})[\sum_{\overline{g}\in G/H}\varepsilon(g,\overline{1})\overline{g}^\pm]$. If $\varepsilon(H,\overline{1})\subset\{1\}$ then $(\mathbb{K}[G/H]^\pm)^G=\mathbb{K}[\sum_{\overline{g}\in G/H}\varepsilon(g,\overline{1})\overline{g}^\pm]$.
\end{proof}

\begin{lm}\label{technique2}
    In the situation of Lemma \ref{technique} and when $\varepsilon(H,\overline{1})\subset\{1\}$, we define the {\normalfont orbit map} $\mathcal{O}:(\mathbb{K}[G/H]^\pm)_G\longrightarrow (\mathbb{K}[G/H]^\pm)^G$ by
    $$\mathcal{O}([\overline{1}]^ \pm)=\sum_{\overline{g}\in G/H}\varepsilon(g,\overline{1})\overline{g}^\pm.$$

    \noindent This map is an isomorphism.
\end{lm}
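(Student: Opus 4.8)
The plan is to derive the statement almost directly from Lemma \ref{technique} in the case $\varepsilon(H,\overline{1})\subset\{1\}$, the only genuine work being a well-definedness check for $\mathcal{O}$. Write $S:=(\mathbb{K}[G/H]^\pm)_G$ and $T:=(\mathbb{K}[G/H]^\pm)^G$, and set $v:=\sum_{\overline{g}\in G/H}\varepsilon(g,\overline{1})\,\overline{g}^\pm$. By the second bullet of Lemma \ref{technique}, $v$ is a well-defined $G$-invariant element, $T=\mathbb{K}\,v$ and $S=\mathbb{K}\,[\overline{1}^\pm]$. To see that $\mathcal{O}([\overline{1}^\pm])=v$ really defines a $\mathbb{K}$-linear map $S\longrightarrow T$, I would introduce the lift $\widetilde{\mathcal{O}}:\mathbb{K}[G/H]^\pm\longrightarrow T$ on the canonical basis by $\widetilde{\mathcal{O}}(\overline{g}^\pm)=\varepsilon(g,\overline{1})\,v$. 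This is well-defined on the basis because, for $h\in H$, functoriality of $\varepsilon$ and the hypothesis $\varepsilon(h,\overline{1})=1$ give $\varepsilon(gh,\overline{1})=\varepsilon(g,\overline{1})$, so the coefficient depends only on the class $\overline{g}$. One then checks that $\widetilde{\mathcal{O}}$ is $G$-equivariant for the trivial $G$-action on $T$: from $g_0\cdot\overline{g}^\pm=\varepsilon(g_0,\overline{g})\,\overline{g_0g}^\pm$, the cocycle identity $\varepsilon(g_0g,\overline{1})=\varepsilon(g_0,\overline{g})\,\varepsilon(g,\overline{1})$ and $\varepsilon(g_0,\overline{g})^2=1$, one gets $\widetilde{\mathcal{O}}(g_0\cdot\overline{g}^\pm)=\widetilde{\mathcal{O}}(\overline{g}^\pm)$. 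Hence $\widetilde{\mathcal{O}}$ annihilates every $g_0\cdot w-w$ and descends to $\mathcal{O}:S\longrightarrow T$ with $\mathcal{O}([\overline{1}^\pm])=\widetilde{\mathcal{O}}(\overline{1}^\pm)=\varepsilon(1,\overline{1})\,v=v$, using that $\varepsilon(1,\overline{1})=1$ is the image of an identity morphism.

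It then remains to show that $\mathcal{O}$ is bijective, which I expect to be routine. Surjectivity: since $S=\mathbb{K}\,[\overline{1}^\pm]$, the image of $\mathcal{O}$ is $\mathbb{K}\,v$, which is all of $T$ by Lemma \ref{technique}. Injectivity: let $p:\mathbb{K}[G/H]^\pm\longrightarrow\mathbb{K}$ be the projection onto the $\overline{1}^\pm$-coordinate of the canonical basis; the coefficient of $\overline{1}^\pm$ in $v$ is $\varepsilon(1,\overline{1})=1$, so $p(\lambda v)=\lambda$ for every $\lambda\in\mathbb{K}$, and therefore $\mathcal{O}(\lambda[\overline{1}^\pm])=\lambda v=0$ forces $\lambda=0$, hence $\lambda[\overline{1}^\pm]=0$. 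Thus $\mathcal{O}$ is an isomorphism, with inverse $\lambda v\mapsto\lambda[\overline{1}^\pm]$, i.e. the restriction of $p$ to $T$ followed by $\lambda\mapsto\lambda[\overline{1}^\pm]$.

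I expect the only delicate point to be the well-definedness of $\mathcal{O}$ — that $v$ is a genuine $G$-invariant element of $\mathbb{K}[G/H]^\pm$ and that prescribing its value on the generator $[\overline{1}^\pm]$ does not run into a relation — and this is handled, as above, by the cocycle identity for $\varepsilon$ and the hypothesis $\varepsilon(H,\overline{1})\subset\{1\}$, in the same spirit as the proof of Lemma \ref{technique}; the bijectivity step is then formal. If one prefers not to invoke Lemma \ref{technique} for the $G$-invariance of $v$, it can be checked by hand: for $g_0\in G$, $g_0\cdot v=\sum_{\overline{g}}\varepsilon(g,\overline{1})\,\varepsilon(g_0,\overline{g})\,\overline{g_0g}^\pm=\sum_{\overline{g}}\varepsilon(g_0g,\overline{1})\,\overline{g_0g}^\pm=v$ after reindexing the sum.
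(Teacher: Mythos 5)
Your proof is correct and follows the same route as the paper, which simply declares the lemma an immediate consequence of the second point of Lemma \ref{technique}; you have merely filled in the details (well-definedness of $\mathcal{O}$ on the coinvariants via a $G$-invariant lift, and bijectivity via the projection onto the $\overline{1}^\pm$-coordinate) that the paper leaves implicit. Your explicit injectivity argument in fact also justifies that $(\mathbb{K}[G/H]^\pm)_G$ is free of rank one, a point Lemma \ref{technique} asserts without proof, so the extra care is not wasted.
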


\begin{proof}
    This is an immediate consequence of the second point of Lemma \ref{technique}.
\end{proof}

We can apply this lemma to our situation, by noting that for every $\mathfrak{t}\in\mathcal{RT}(n)\otimes\mathcal{L}^{\otimes n}$, the set $X_\mathfrak{t}$ is in bijection with $\Sigma_n/\text{\normalfont Stab}_{\Sigma_n}(\mathfrak{t})$. In order to apply Lemma \ref{technique2}, we need to remove some elements of $\mathcal{RT}(n)\otimes\mathcal{L}^{\otimes n}$. These elements are given by tensors $\mathfrak{t}\in\mathcal{RT}(n)\otimes\mathcal{L}^{\otimes n}$ such that there exists $\sigma\in\text{\normalfont Stab}_{\Sigma_n}(\mathfrak{t})$ with $\varepsilon(\sigma,\mathfrak{t})\neq 1$. We denote by $(\mathcal{RT}(n)\otimes\mathcal{L}^{\otimes n})^o$ the set of such elements and $(\mathcal{RT}(n)\otimes\mathcal{L}^{\otimes n})^r=(\mathcal{RT}(n)\otimes\mathcal{L}^{\otimes n})\setminus(\mathcal{RT}(n)\otimes\mathcal{L}^{\otimes n})^o$. In the case $char(\mathbb{K})=2$, we just have $(\mathcal{RT}(n)\otimes\mathcal{L}^{\otimes n})^r=\mathcal{RT}(n)\otimes\mathcal{L}^{\otimes n}$.\\

\noindent We note that these sets are stable under the action of $\Sigma_n$. Indeed, if $\mathfrak{t}\in\mathcal{RT}(n)\otimes\mathcal{L}^{\otimes n}$ is such that there exists $\sigma\in\text{\normalfont Stab}_{\Sigma_n}(\mathfrak{t})$ with $\varepsilon(\sigma,\mathfrak{t})\neq 1$, then for every $\tau\in\Sigma_n$, we have that $\tau\sigma\tau^{-1}\in\text{\normalfont Stab}_{\Sigma_n}(\tau\cdot\mathfrak{t})$ and 
$$\varepsilon(\tau\sigma\tau^{-1},\tau\cdot\mathfrak{t})=\varepsilon(\tau,\sigma\cdot\mathfrak{t})\varepsilon(\sigma,\mathfrak{t})\varepsilon(\tau^{-1},\tau\cdot\mathfrak{t})=\varepsilon(\tau,\mathfrak{t})\varepsilon(\sigma,\mathfrak{t})\varepsilon(\tau^{-1},\tau\cdot\mathfrak{t})=\varepsilon(\sigma,\mathfrak{t})\neq 1.$$

\noindent We deduce that the quotient $(\mathcal{RT}(n)\otimes\mathcal{L}^{\otimes n})/\Sigma_n$ is the disjoint union of $(\mathcal{RT}(n)\otimes\mathcal{L}^{\otimes n})^o/\Sigma_n$ and $(\mathcal{RT}(n)\otimes\mathcal{L}^{\otimes n})^r/\Sigma_n$. We then define
$$\mathcal{S}^r(\mathcal{P}re\mathcal{L}ie,L)=\bigoplus_{n\geq 1}(\mathbb{K}[(\mathcal{RT}(n)\otimes\mathcal{L}^{\otimes n})^r]^\pm)_{\Sigma_n}\subset\mathcal{S}(\mathcal{P}re\mathcal{L}ie,L).$$
We deduce the following proposition.

\begin{prop}\label{Oiso}
    If $\mathbb{K}$ is an integral domain and if $L^k$ is a free $\mathbb{K}$-module for every $k\in\mathbb{Z}$, then the map $\mathcal{O}:{\mathcal{S}}^r(\mathcal{P}re\mathcal{L}ie,L)\longrightarrow\Gamma(\mathcal{P}re\mathcal{L}ie,L)$ is an isomorphism.
\end{prop}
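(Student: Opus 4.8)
The plan is to reduce the statement to a block-by-block computation, using the decomposition of $\mathcal{P}re\mathcal{L}ie(n)\otimes L^{\otimes n}$ into the twisted permutation representations $\mathbb{K}[X_\mathfrak{t}]^\pm$ recorded above, and then to feed each block into Lemmas \ref{technique} and \ref{technique2}. First I would observe that each $\mathbb{K}[X_\mathfrak{t}]^\pm$ is concentrated in a single cohomological degree, since all representatives $\sigma\cdot\mathfrak{t}$ share the total degree of $\mathfrak{t}$; hence taking $\Sigma_n$-invariants and $\Sigma_n$-coinvariants commutes with the direct sums above and produces
$$\Gamma(\mathcal{P}re\mathcal{L}ie,L)=\bigoplus_{n\geq 1}\bigoplus_{\overline{\mathfrak{t}}\in(\mathcal{RT}(n)\otimes\mathcal{L}^{\otimes n})/\Sigma_n}(\mathbb{K}[X_\mathfrak{t}]^\pm)^{\Sigma_n},$$
$$\mathcal{S}^r(\mathcal{P}re\mathcal{L}ie,L)=\bigoplus_{n\geq 1}\bigoplus_{\overline{\mathfrak{t}}\in(\mathcal{RT}(n)\otimes\mathcal{L}^{\otimes n})^r/\Sigma_n}(\mathbb{K}[X_\mathfrak{t}]^\pm)_{\Sigma_n},$$
where the second identity is the definition of $\mathcal{S}^r$ together with the decomposition of $(\mathcal{RT}(n)\otimes\mathcal{L}^{\otimes n})^r$ into its regular orbits.

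Next I would identify, via the bijection $X_\mathfrak{t}\cong\Sigma_n/\text{Stab}_{\Sigma_n}(\mathfrak{t})$ noted before the statement, each block $\mathbb{K}[X_\mathfrak{t}]^\pm$ with the representation $\mathbb{K}[G/H]^\pm$ of Lemma \ref{technique} with $G=\Sigma_n$ and $H=\text{Stab}_{\Sigma_n}(\mathfrak{t})$, the twisting functor being the restriction of $\varepsilon$, and check that the orbit morphism $\mathcal{O}$ is block-diagonal for this decomposition, restricting on the block of $\mathfrak{t}$ to the map $[\overline{1}^\pm]\mapsto\sum_{\overline{g}}\varepsilon(g,\overline{1})\overline{g}^\pm$ of Lemma \ref{technique2}. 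Then the dichotomy of Lemma \ref{technique} closes the argument. On a regular orbit $\overline{\mathfrak{t}}$ (i.e. one with $\varepsilon(\text{Stab}_{\Sigma_n}(\mathfrak{t}),\overline{1})\subset\{1\}$), Lemma \ref{technique2} asserts precisely that this block of $\mathcal{O}$ is an isomorphism $(\mathbb{K}[X_\mathfrak{t}]^\pm)_{\Sigma_n}\xrightarrow{\ \sim\ }(\mathbb{K}[X_\mathfrak{t}]^\pm)^{\Sigma_n}$. On an exceptional orbit $\overline{\mathfrak{t}}\in(\mathcal{RT}(n)\otimes\mathcal{L}^{\otimes n})^o/\Sigma_n$ there is $\sigma\in\text{Stab}_{\Sigma_n}(\mathfrak{t})$ with $\varepsilon(\sigma,\mathfrak{t})=-1\neq 1$ in $\mathbb{K}$, so $char(\mathbb{K})\neq 2$, and since $\mathbb{K}$ is an integral domain $\text{Tor}_2(\mathbb{K})=0$; hence the first point of Lemma \ref{technique} gives $(\mathbb{K}[X_\mathfrak{t}]^\pm)^{\Sigma_n}=0$. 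Thus the exceptional blocks contribute nothing to $\Gamma(\mathcal{P}re\mathcal{L}ie,L)$ and are absent from $\mathcal{S}^r(\mathcal{P}re\mathcal{L}ie,L)$ by construction, so $\mathcal{O}:\mathcal{S}^r(\mathcal{P}re\mathcal{L}ie,L)\to\Gamma(\mathcal{P}re\mathcal{L}ie,L)$ is a direct sum of the isomorphisms furnished by Lemma \ref{technique2}, hence an isomorphism.

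The step I expect to be the main obstacle is the bookkeeping concealed in ``$\mathcal{O}$ is block-diagonal'': one has to verify that the orbit morphism -- defined through the maps $\psi_{x,y_1,\ldots,y_n}$ and $\Gamma(\mathcal{P}re\mathcal{L}ie,\psi_{x,y_1,\ldots,y_n})$, and transported into the dg setting by means of the homogeneous basis $\mathcal{L}$ and the sign functor $\varepsilon$ -- really respects the decomposition into the $\mathbb{K}[X_\mathfrak{t}]^\pm$, and that on each block it agrees on the nose, signs included, with the map written in Lemma \ref{technique2}. Everything else is formal: the commutation of (co)invariants with direct sums, the one-degree concentration of each block, and the two-case split of Lemma \ref{technique}. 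It is worth recording where the hypotheses enter: freeness of each $L^k$ yields the homogeneous basis $\mathcal{L}$ and hence the block decomposition, while the integral-domain hypothesis is used only to force $\text{Tor}_2(\mathbb{K})=0$ on the exceptional blocks, which can occur only when $char(\mathbb{K})\neq 2$.
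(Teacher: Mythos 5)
Your proposal is correct and follows essentially the same route as the paper: decompose both sides into the blocks $(\mathbb{K}[X_\mathfrak{t}]^\pm)_{\Sigma_n}$ and $(\mathbb{K}[X_\mathfrak{t}]^\pm)^{\Sigma_n}$, apply Lemma \ref{technique2} on the regular orbits, and use the integral-domain hypothesis (via $\text{\normalfont Tor}_2(\mathbb{K})=0$) together with the first point of Lemma \ref{technique} to kill the invariants on the exceptional orbits. The only cosmetic difference is that the paper splits into the cases $char(\mathbb{K})=2$ and $char(\mathbb{K})\neq 2$, whereas you observe that exceptional orbits can only exist when $char(\mathbb{K})\neq 2$, which handles both cases uniformly.
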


\begin{proof}
    Let $\mathcal{L}$ be a basis of $L$ composed of homogeneous elements. We adopt the same notations as before Lemma \ref{technique} and before the statement of Proposition \ref{Oiso}. We then note that we have
    $$\mathcal{S}^r(\mathcal{P}re\mathcal{L}ie,L)=\bigoplus_{n\geq 1}\bigoplus_{\overline{\mathfrak{t}}\in(\mathcal{RT}(n)\otimes\mathcal{L}^{\otimes n})^r/\Sigma_n}(\mathbb{K}[X_\mathfrak{t}]^\pm)_{\Sigma_n};$$
    $$\Gamma(\mathcal{P}re\mathcal{L}ie,L)=\bigoplus_{n\geq 1}\bigoplus_{\overline{\mathfrak{t}}\in(\mathcal{RT}(n)\otimes\mathcal{L}^{\otimes n})/{\Sigma_n}}(\mathbb{K}[X_\mathfrak{t}]^\pm)^{\Sigma_n}.$$
    
    \noindent If $char(\mathbb{K})=2$, then $2\mathbb{K}=0$ and $\text{\normalfont Tor}_2(\mathbb{K})=\mathbb{K}$ so that the proposition is an immediate consequence of Lemma \ref{technique2}.\\ If $char(\mathbb{K})\neq 2$, then by the first point of Lemma \ref{technique} and by noting that $\text{\normalfont Tor}_2(\mathbb{K})=0$ because $\mathbb{K}$ is an integral domain, we have that $(\mathbb{K}[(\mathcal{RT}(n)\otimes\mathcal{L}^{\otimes n})^o]^\pm)^{\Sigma_n}=0$. We are then reduced to analyze the orbit maps $\mathcal{O}:(\mathbb{K}[X_\mathfrak{t}]^\pm)_{\Sigma_n}\longrightarrow(\mathbb{K}[X_\mathfrak{t}]^\pm)^{\Sigma_n}$ for $\mathfrak{t}\in(\mathcal{RT}(n)\otimes\mathcal{L}^{\otimes n})^r$, which are isomorphisms by the second point of Lemma \ref{technique2}.\\

    In any case, we then obtain that $\mathcal{O}:\mathcal{S}^r(\mathcal{P}re\mathcal{L}ie,L)\longrightarrow\Gamma(\mathcal{P}re\mathcal{L}ie,L)$ is an isomorphism.
\end{proof}

\begin{thm}\label{prelie1}
    Let $\mathbb{K}$ be a ring. A graded pre-Lie algebra with divided powers $ L=\bigoplus_{k\in\mathbb{Z}}L^{k}$ over $\mathbb{K}$ comes equipped with operations, called {\normalfont{weighted braces}}, which have the following form.

    \begin{itemize}
        \item[-] If $char(\mathbb{K})=2$, weighted braces are maps
        $$-\{-,\ldots,-\}_{r_{1},\ldots,r_{n}}:L^{\times n+1}\longrightarrow L,$$
        \noindent defined for any collection of integers $r_{1},\ldots,r_{n}\geq 0$, which satisfy all formulas of Theorem \ref{Cesaro} and preserve the grading in the sense that
        $$L^{k}\{L^{k_{1}},\ldots,L^{k_{n}}\}_{r_{1},\ldots,r_{n}}\subset L^{k+k_{1}r_{1}+\cdots+k_{n}r_{n}}.$$

        \item[-] If $char(\mathbb{K})\neq 2$, by setting $ L^{ev}=\bigoplus_{k\in\mathbb{Z}} L^{2k}$ and $ L^{odd}=\bigoplus_{k\in\mathbb{Z}} L^{2k+1}$, weighted braces are maps
        $$-\{\underbrace{-,\ldots,-}_{p},\underbrace{-,\ldots,-}_{q}\}_{r_{1},\ldots,r_{p},1,\ldots,1}:L\times (L^{ev})^{\times p}\times (L^{odd})^{\times q}\longrightarrow L,$$
        \noindent defined for any collection of integers $p,q,r_{1},\ldots,r_{p}\geq 0$, which satisfy all formulas of Theorem \ref{Cesaro} with a sign given by the Koszul sign rule (see the Remark \ref{remsign} below) and preserve the grading.
    \end{itemize}

    Conversely, if $\mathbb{K}$ is an integral domain, if $L^k$ is a free $\mathbb{K}$-module for every $k\in\mathbb{Z}$ and if $L$ admits weighted brace operations, then $L$ is a $\Gamma(\mathcal{P}re\mathcal{L}ie,-)$-algebra.
\end{thm}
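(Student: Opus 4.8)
This is the graded counterpart of Cesaro's equivalence \cite[Proposition 5.13]{cesaro}, and the plan is to run his argument with Proposition \ref{Oiso} in place of the ungraded orbit isomorphism, carrying all Koszul signs along by means of the cocycle $\varepsilon$ introduced before Lemma \ref{technique}. The starting point is that, exactly as in the Construction preceding Theorem \ref{Cesaro}, the weighted braces are \emph{natural} in $L$: for a free \emph{graded} $\mathbb{K}$-module $E$ on homogeneous generators $e,e_1,\ldots,e_n$ with $|e|=|x|$ and $|e_i|=|y_i|$, and $\psi\colon E\to L$ sending $e\mapsto x$ and $e_i\mapsto y_i$, one sets
$$x\{y_1,\ldots,y_n\}_{r_1,\ldots,r_n}=\gamma\bigl(\Gamma(\mathcal{P}re\mathcal{L}ie,\psi)(\mathcal{O}F_{\sum_i r_i}(e,\underbrace{e_1,\ldots,e_1}_{r_1},\ldots,\underbrace{e_n,\ldots,e_n}_{r_n}))\bigr),$$
where $\gamma$ is the structure map of $L$. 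Since $E$ is free, Lemma \ref{technique2} makes sense of the orbit map $\mathcal{O}$ on the relevant summand of $\mathcal{S}(\mathcal{P}re\mathcal{L}ie,E)$, so the formula is well posed for any $\Gamma(\mathcal{P}re\mathcal{L}ie,-)$-algebra $L$; and by Yoneda, verifying any one of the identities $(i)$--$(vi)$ reduces to the case of a free $\Gamma(\mathcal{P}re\mathcal{L}ie,-)$-algebra on finitely many homogeneous generators.

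For the direct implication one must also account for the shape of the operations. If $char(\mathbb{K})=2$ there is nothing to restrict: $(\mathcal{RT}(n)\otimes\mathcal{L}^{\otimes n})^r=\mathcal{RT}(n)\otimes\mathcal{L}^{\otimes n}$, the orbit map is defined on every decorated corolla, and one gets maps $L^{\times n+1}\to L$ for all tuples $(r_1,\ldots,r_n)$. If $char(\mathbb{K})\neq 2$ and some odd generator $e_i$ occurs with multiplicity $r_i\geq 2$, then the transposition of two of its copies fixes the underlying tensor (the corolla being symmetric in its leaves) yet has Koszul sign $(-1)^{|e_i|^2}=(-1)^{|e_i|}=-1\neq 1$; that tensor therefore lies in $(\mathcal{RT}(n)\otimes\mathcal{L}^{\otimes n})^o$, where $\mathcal{O}$ is unavailable --- its $\Sigma_n$-invariants being $2$-torsion by the first point of Lemma \ref{technique} --- so the only operations one can form are those in which every odd argument carries weight $1$, i.e. maps on $L\times(L^{ev})^{\times p}\times(L^{odd})^{\times q}$ as asserted. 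The grading is preserved because $F_{\sum_i r_i}$ sits in degree $0$, whence $|x\{y_1,\ldots,y_n\}_{r_1,\ldots,r_n}|=|x|+\sum_i r_i|y_i|$.

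It remains to verify $(i)$--$(vi)$ with Koszul signs (read within their domain of definition, cf. Remark \ref{remsign}). By the reduction above we work in a free $\Gamma(\mathcal{P}re\mathcal{L}ie,-)$-algebra on homogeneous generators $W$; there, by Lemma \ref{technique2}, the orbit map embeds $\mathcal{S}^r(\mathcal{P}re\mathcal{L}ie,W)\subset\mathcal{S}(\mathcal{P}re\mathcal{L}ie,W)$ --- a submodule of the free graded pre-Lie algebra on $W$ --- into $\Gamma(\mathcal{P}re\mathcal{L}ie,W)$, and all the elements entering $(i)$--$(vi)$ lie in its image; the weighted braces and the monad multiplication thus become explicit operations on rooted trees decorated by the generators, each term weighted by the Koszul sign $\varepsilon$ of the chosen ordering of its decorations. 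Because $\varepsilon$ satisfies the cocycle identity $\varepsilon(\sigma\tau,\mathfrak{t})=\varepsilon(\sigma,\tau\cdot\mathfrak{t})\varepsilon(\tau,\mathfrak{t})$ and is compatible with operadic composition, the signs on matching terms of the two sides of each identity agree; erasing them turns each graded identity into the corresponding ungraded one of \cite[Propositions 5.9-5.10]{cesaro}, which holds. The one delicate point is $(vi)$ and its fraction-free rewriting recalled after Theorem \ref{Cesaro}: one must check that the grouping-and-$(iv)$ reduction to $\mathbb{Z}$ yields the same binomial coefficients and no extra sign --- which is so because in the $char(\mathbb{K})\neq 2$ regime the repeated arguments occurring in that reduction are necessarily even.

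For the converse, let $\mathbb{K}$ be an integral domain, each $L^k$ free, and $L$ equipped with weighted braces satisfying $(i)$--$(vi)$. By Proposition \ref{Oiso} the orbit map $\mathcal{O}\colon\mathcal{S}^r(\mathcal{P}re\mathcal{L}ie,L)\to\Gamma(\mathcal{P}re\mathcal{L}ie,L)$ is an isomorphism, so defining a structure map $\gamma\colon\Gamma(\mathcal{P}re\mathcal{L}ie,L)\to L$ amounts to defining a $\mathbb{K}$-linear map out of $\mathcal{S}^r(\mathcal{P}re\mathcal{L}ie,L)$. One prescribes the value of $\gamma$ on the orbit-sum of a decorated corolla to be the corresponding weighted brace, and extends to the orbit-sum of an arbitrary decorated rooted tree by decomposing the tree into corollas and applying the monad multiplication of $\Gamma(\mathcal{P}re\mathcal{L}ie,-)$; the composition formula $(vi)$, together with the symmetry $(i)$ and the weight identities $(ii)$--$(v)$, ensures that the value is independent of the chosen decomposition and that the resulting $\gamma$ satisfies the unit axiom (which is just $a\{\}=a$) and the associativity axiom $\gamma\circ\Gamma(\mathcal{P}re\mathcal{L}ie,\gamma)=\gamma\circ\mu$, where $\mu$ is the monad multiplication --- this last axiom unwinding, through the tree basis of Lemma \ref{technique}, precisely into the family $(i)$--$(vi)$. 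This is the content of the converse half of \cite[Proposition 5.13]{cesaro} (see the Remark after Theorem \ref{Cesaro}), now run with Proposition \ref{Oiso} and with Koszul signs carried by $\varepsilon$ as in the direct implication. The technical heart, and the main obstacle throughout, is exactly this sign bookkeeping in $(vi)$: one has to make sure that the reduction to $\mathbb{Z}$ commutes with the cocycle $\varepsilon$ and that no step of it introduces a repeated odd argument when $char(\mathbb{K})\neq 2$, which would contribute an unwanted $2$-torsion term.
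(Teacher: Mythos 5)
Your proposal is correct and follows essentially the same route as the paper: the direct implication defines the weighted braces by pushing the orbit of a corolla in the free graded module $E_{x,y_1,\ldots,y_n}$ through $\Gamma(\mathcal{P}re\mathcal{L}ie,\psi)$ and the structure map, reducing the identities to Cesaro's ungraded ones with Koszul-sign bookkeeping, and the converse uses Proposition \ref{Oiso} to build the structure map from the braces on decorated corollas exactly as in the paper. Your explicit justification of why odd arguments must carry weight $1$ in characteristic $\neq 2$ (such tensors lie in $(\mathcal{RT}(n)\otimes\mathcal{L}^{\otimes n})^{o}$, whose invariants are $2$-torsion) and of why the fraction-free rewriting of $(vi)$ never groups a repeated odd argument are details the paper leaves implicit, but they are consistent with its argument.
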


\begin{remarque}\label{remsign}
    If $char(\mathbb{K})\neq 2$, formulas $(i)$ and $(vi)$ of Theorem \ref{Cesaro} differ by a sign given by the Koszul sign rule. The sign which appears in formula $(i)$ is given by 
    $$y_{\sigma(1)}^{\otimes r_{\sigma(1)}}\otimes\cdots\otimes y_{\sigma(n)}^{\otimes r_{\sigma(n)}}\longmapsto\pm y_1^{\otimes r_1}\otimes\cdots\otimes y_n^{\otimes r_n}.$$
    
    \noindent In formula $(vi)$, for each $\beta_i$'s and $\alpha_i^{\bullet,\bullet}$'s, the sign which appears in the relevant term is given by
    $$z_1^{\otimes s_1}\otimes\cdots\otimes z_m^{\otimes s_m}\longmapsto\pm z_1^{\otimes\alpha_1^{1,1}}\otimes\cdots\otimes z_m^{\otimes\alpha_m^{1,1}}\otimes\cdots\otimes z_1^{\otimes\alpha_1^{n,1}}\otimes\cdots\otimes z_m^{\otimes\alpha_m^{n,r_n}}\otimes z_1^{\otimes\beta_1}\otimes\cdots\otimes z_m^{\otimes\beta_m}.$$

    \noindent Note that these signs are induced by the permutation of the odd degree elements between them. Since their associated weight are equal to $1$, formula $(vi)$ can still be written without rational coefficients by the same process as in the paragraph after Theorem \ref{Cesaro}.
\end{remarque}

In order to handle both of the cases, in the following, when taking elements with associated weights, we will tacitly suppose that if $char(\mathbb{K})\neq 2$, then all odd degree elements will have an associated weight equal to $1$.\\

\begin{proof}
    We basically do the same thing as in \cite[Proposition 5.10]{cesaro}. Let $x,y_{1},\ldots,y_{n}\in L$. Let $E_{x,y_1,\ldots,y_n}$ be the graded $\mathbb{K}$-module generated by $e, e_{1},\ldots,e_{n}$ with matching degrees. We have a morphism of graded modules from $\psi_{x,y_1,\ldots,y_n}:E_{x,y_1,\ldots,y_n}\longrightarrow L$ which sends $e$ to $x$ and $e_{i}$ to $y_{i}$ for every $1\leq i\leq n$. This gives rise by functoriality to a morphism $\Gamma(\mathcal{P}re\mathcal{L}ie,\psi_{x,y_{1},\ldots,y_{n}}):\Gamma(\mathcal{P}re\mathcal{L}ie,E_{x,y_1,\ldots,y_n})\longrightarrow\Gamma(\mathcal{P}re\mathcal{L}ie,L)$. We set
    $$x\{y_{1},\ldots,y_{n}\}_{r_{1},\ldots,r_{n}}:=l(\Gamma(\mathcal{P}re\mathcal{L}ie,\psi_{x, y_{1},\ldots,y_{n}})(\mathcal{O}F_{\sum_i r_i}(e,\underbrace{e_{1},\ldots, e_{1}}_{r_{1}},\ldots,\underbrace{e_{n},\ldots, e_{n}}_{r_{n}}))),$$

    \noindent where $l$ is the $\Gamma(\mathcal{P}re\mathcal{L}ie,-)$-algebra structure on $L$. One can check that all the desired formulas are satisfied.\\

    Now suppose that $\mathbb{K}$ is an integral domain and that $L$ admits weighted brace operations $-\{-,\ldots,-\}_{r_1,\ldots,r_n}$. We first note that every elements in $\Gamma(\mathcal{P}re\mathcal{L}ie,L)$ can be described as a sum of iterated monadic compositions of corollas in some basis of homogeneous elements of $L$. This can be proved by using Proposition \ref{Oiso} and by following the same proofs of \cite[Theorem 5.1]{cesaro} and \cite[Lemma 5.2]{cesaro}, which come from the computation of the monadic composition in $\Gamma(\mathcal{P}re\mathcal{L}ie,-)$. We next pick elements $x, y_1,\ldots, y_n$ of the chosen basis such that $y_i\neq y_j$ if $i\neq j$, and set
    $$l(\mathcal{O}F_{\sum_i r_i}(x,\underbrace{y_1,\ldots,y_1}_{r_1},\ldots,\underbrace{y_n,\ldots,y_n}_{r_n})):=x\{y_1,\ldots,y_n\}_{r_1,\ldots,r_n}.$$

    \noindent Since every elements of $\Gamma(\mathcal{P}re\mathcal{L}ie,L)$ can be described as a composite of corollas in $\Gamma(\mathcal{P}re\mathcal{L}ie,-)$, we have defined $l:\Gamma(\mathcal{P}re\mathcal{L}ie,L)\longrightarrow L$. We see that this construction does not depend on the choice of the basis of the $L^k$'s. Indeed, we can apply the same arguments as in \cite[Lemma 5.15]{cesaro}, which only rely on computations and on the relations satisfied by weighted braces. The same proof of \cite[Lemma 5.18]{cesaro} can also be applied to prove that the resulting morphism $l$ endows $L$ with a structure of a $\Gamma(\mathcal{P}re\mathcal{L}ie,-)$-algebra.
\end{proof}

This theorem admits an analogue in the differential graded case.

\begin{thm}\label{prelie2}
    Every differential graded pre-Lie algebras with divided powers $L=\bigoplus_{k\in\mathbb{Z}} L^{k}$ admits weighted braces which satisfy the same formulas as in Theorem \ref{prelie1}, and which satisfy in addition the identity
$$d(x\{y_{1},\ldots,y_{n}\}_{r_{1},\ldots,r_{n}})=d(x)\{y_{1},\ldots,y_{n}\}_{r_{1},\ldots,r_{n}}+\sum_{k=1}^{n}(-1)^{\varepsilon_{k}} x\{y_{1},\ldots,y_{k},d(y_{k}),\ldots,y_{n}\}_{r_{1},\ldots,r_{k}-1,1,\ldots,r_{n}},$$

\noindent where $\varepsilon_{k}=|x|+|y_{1}|+\cdots+|y_{k-1}|$.\\

Conversely, if $\mathbb{K}$ is an integral domain, if $L^k$ is a free $\mathbb{K}$-module for every $k\in\mathbb{Z}$ and if $L$ admits weighted brace operations which satisfy the previous identity, then $L$ is a $\Gamma(\mathcal{P}re\mathcal{L}ie,-)$-algebra.
\end{thm}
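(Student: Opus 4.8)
The plan is to deduce everything from the structure map $l:\Gamma(\mathcal{P}re\mathcal{L}ie,L)\longrightarrow L$, which by definition of a differential graded $\Gamma(\mathcal{P}re\mathcal{L}ie,-)$-algebra is a morphism in $\text{\normalfont dgMod}_{\mathbb{K}}$. For the direct implication, forgetting the differential makes $L$ a graded $\Gamma(\mathcal{P}re\mathcal{L}ie,-)$-algebra, so Theorem \ref{prelie1} already provides weighted braces satisfying the formulas of Theorem \ref{Cesaro} (with the appropriate Koszul signs) and preserving the grading; only the Leibniz-type identity for $d$ remains. To get it, I would fix $x,y_1,\ldots,y_n\in L$ and introduce the $\mathbb{K}$-module $\widetilde{E}$ that is free on generators $e,e_1,\ldots,e_n,\overline{e},\overline{e_1},\ldots,\overline{e_n}$ as a graded module, with $|\overline{e}|=|e|+1$, $|\overline{e_i}|=|e_i|+1$, equipped with the differential determined by $d(e)=\overline{e}$, $d(e_i)=\overline{e_i}$, $d(\overline{e})=d(\overline{e_i})=0$, together with the morphism of dg modules $\psi:\widetilde{E}\longrightarrow L$ sending $e\mapsto x$, $e_i\mapsto y_i$ (hence necessarily $\overline{e}\mapsto d(x)$, $\overline{e_i}\mapsto d(y_i)$). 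Since the element $\mathcal{O}F_N(e,\underbrace{e_1,\ldots,e_1}_{r_1},\ldots,\underbrace{e_n,\ldots,e_n}_{r_n})$ (with $N=\sum_i r_i$) lies in the sub-$\Gamma$ generated by $e,e_1,\ldots,e_n$, the value $x\{y_1,\ldots,y_n\}_{r_1,\ldots,r_n}=l\,\Gamma(\mathcal{P}re\mathcal{L}ie,\psi)\bigl(\mathcal{O}F_N(e,e_1^{r_1},\ldots,e_n^{r_n})\bigr)$ agrees with the definition used in the proof of Theorem \ref{prelie1}. As $\Gamma(\mathcal{P}re\mathcal{L}ie,-)$ is an endofunctor of $\text{\normalfont dgMod}_{\mathbb{K}}$, the map $\Gamma(\mathcal{P}re\mathcal{L}ie,\psi)$ commutes with differentials, and so does $l$; thus $d\bigl(x\{y_1,\ldots,y_n\}_{r_1,\ldots,r_n}\bigr)=l\,\Gamma(\mathcal{P}re\mathcal{L}ie,\psi)\bigl(d\,\mathcal{O}F_N(e,e_1^{r_1},\ldots,e_n^{r_n})\bigr)$, where on $\Gamma(\mathcal{P}re\mathcal{L}ie,\widetilde{E})$ the differential is induced purely from that of $\widetilde{E}$ because $\mathcal{P}re\mathcal{L}ie(n)$ is concentrated in degree $0$ with zero internal differential.

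The core of the argument is then the evaluation of $d\,\mathcal{O}F_N(e,e_1^{r_1},\ldots,e_n^{r_n})$ in $\Gamma(\mathcal{P}re\mathcal{L}ie,\widetilde{E})$, where $d$ is $\Sigma_{N+1}$-equivariant and acts on the tensor factors by the graded Leibniz rule. Writing $\mathcal{O}F_N$ as a twisted orbit sum as in Lemma \ref{technique} and grouping the Leibniz terms according to which block of tensor slots is differentiated, the slot carrying $e$ contributes $\mathcal{O}F_N(\overline{e},e_1^{r_1},\ldots,e_n^{r_n})$ with sign $+1$, while the $k$-th block, carrying $r_k$ copies of $e_k$, contributes with Koszul sign $(-1)^{|x|+r_1|y_1|+\cdots+r_{k-1}|y_{k-1}|}$ a sum of $r_k$ terms each replacing one copy of $e_k$ by $\overline{e_k}$. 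The decisive combinatorial observation is that the $\Sigma_{N+1}$-stabilizer of a tensor with $\overline{e_k}$ substituted for one copy of $e_k$ is exactly $r_k$ times smaller than that of $e\otimes e_1^{\otimes r_1}\otimes\cdots\otimes e_k^{\otimes r_k}\otimes\cdots$, so these $r_k$ summands reassemble, with consistent signs, into a single orbit sum $\mathcal{O}F_N(e,e_1^{r_1},\ldots,e_k^{r_k-1},\overline{e_k},\ldots,e_n^{r_n})$ — this is the same stabilizer bookkeeping that underlies the reduction of formula $(vi)$ in Theorem \ref{Cesaro} and the Construction after it. Applying $l\,\Gamma(\mathcal{P}re\mathcal{L}ie,\psi)$ and invoking the definition of the weighted braces, the $e$-block yields $d(x)\{y_1,\ldots,y_n\}_{r_1,\ldots,r_n}$ and the $k$-th block yields $(-1)^{\varepsilon_k}x\{y_1,\ldots,y_k,d(y_k),\ldots,y_n\}_{r_1,\ldots,r_k-1,1,\ldots,r_n}$; the identification of the exponent $r_1|y_1|+\cdots+r_{k-1}|y_{k-1}|$ with $\varepsilon_k=|x|+|y_1|+\cdots+|y_{k-1}|$ modulo $2$ uses the convention in force when $char(\mathbb{K})\neq 2$ that every odd-degree argument carries weight $1$, so that $r_j|y_j|\equiv|y_j|\pmod 2$. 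Summing over blocks gives the asserted identity; in characteristic $2$ there are no signs to track.

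For the converse, assume $\mathbb{K}$ is an integral domain, each $L^k$ is free, and $L$ carries weighted braces satisfying the formulas of Theorem \ref{prelie1} together with the differential identity. By the converse half of Theorem \ref{prelie1}, the underlying graded module is a graded $\Gamma(\mathcal{P}re\mathcal{L}ie,-)$-algebra with structure map $l:\Gamma(\mathcal{P}re\mathcal{L}ie,L)\longrightarrow L$ determined on corollas by $l(\mathcal{O}F_N(x,y_1^{r_1},\ldots,y_n^{r_n}))=x\{y_1,\ldots,y_n\}_{r_1,\ldots,r_n}$ for basis elements $x,y_1,\ldots,y_n$ with $y_i\neq y_j$, and extended $\mathbb{K}$-linearly. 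It remains to check that $l$ is a morphism of dg modules, i.e. $d\circ l=l\circ d$, where the differential on $\Gamma(\mathcal{P}re\mathcal{L}ie,L)$ is the one induced by $d_L$. Using Proposition \ref{Oiso} and the description — as in the proof of Theorem \ref{prelie1}, following \cite[Theorem 5.1]{cesaro} and \cite[Lemma 5.2]{cesaro} — of every element of $\Gamma(\mathcal{P}re\mathcal{L}ie,L)$ as a sum of monadic composites of corollas on homogeneous basis elements, it suffices to verify the equality on elements $\mathcal{O}F_N(x,y_1^{r_1},\ldots,y_n^{r_n})$. On such an element the same Leibniz computation as above, carried out now directly in $\Gamma(\mathcal{P}re\mathcal{L}ie,L)$, expands $l\bigl(d\,\mathcal{O}F_N(x,y_1^{r_1},\ldots,y_n^{r_n})\bigr)$ as $d(x)\{y_1,\ldots,y_n\}_{r_1,\ldots,r_n}+\sum_{k=1}^{n}(-1)^{\varepsilon_k}x\{y_1,\ldots,y_k,d(y_k),\ldots,y_n\}_{r_1,\ldots,r_k-1,1,\ldots,r_n}$, which by the assumed differential identity equals $d\bigl(x\{y_1,\ldots,y_n\}_{r_1,\ldots,r_n}\bigr)=d\bigl(l\,\mathcal{O}F_N(x,y_1^{r_1},\ldots,y_n^{r_n})\bigr)$. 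Hence $l$ is a morphism of dg modules and $L$ is a differential graded $\Gamma(\mathcal{P}re\mathcal{L}ie,-)$-algebra.

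The step I expect to be the main obstacle is the combinatorial and sign bookkeeping inside the Leibniz computation: verifying that the $r_k$ terms produced by differentiating the copies of $y_k$ collapse to a single weighted brace with weight pattern $(\ldots,r_k-1,1,\ldots)$ rather than $r_k$ copies of it — which requires a careful use of how stabilizers behave when a repeated argument is split into a smaller repetition plus a singleton — and ensuring that the Koszul signs coming from the graded tensor differential match $\varepsilon_k$ uniformly, both in characteristic $2$ and, via the odd-weight-one convention, in characteristic different from $2$.
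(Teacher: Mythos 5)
Your treatment of the direct implication is essentially the paper's own argument: the same auxiliary free dg module on generators $e,e_1,\ldots,e_n$ and their differentials, the same reduction of the Leibniz terms via the bijection between $Sh(1,r_1,\ldots,r_n)\times\{1,\ldots,r_k\}$ and $Sh(1,r_1,\ldots,r_k-1,1,\ldots,r_n)$, and the same sign analysis using the odd-weight-one convention. That half is fine.

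The converse, however, has a genuine gap at the step where you assert that ``the same Leibniz computation as above, carried out now directly in $\Gamma(\mathcal{P}re\mathcal{L}ie,L)$'' identifies $l\bigl(d\,\mathcal{O}F_N(x,y_1^{r_1},\ldots,y_n^{r_n})\bigr)$ with $d(x)\{y_1,\ldots,y_n\}_{r_1,\ldots,r_n}+\sum_k\pm\, x\{y_1,\ldots,y_k,d(y_k),\ldots,y_n\}_{r_1,\ldots,r_k-1,1,\ldots,r_n}$. The computation is \emph{not} the same: in the direct implication the elements $d(e_k)$ are fresh free generators of $\widetilde{E}$, distinct from all the $e_i$, so no collisions occur in the orbit sums. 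In the converse, $d(y_k)$ is an honest element of $L$ whose expansion in the chosen basis may involve the other $y_i$'s, say $d(y_k)=\sum_{i\neq k}\lambda_{k,i}y_i+f_k$. The $y_i$-components produce tensors with $r_i+1$ copies of $y_i$, and regrouping the shuffle sum $Sh(1,\ldots,r_i,1,\ldots,r_k-1,\ldots)$ into $Sh(1,\ldots,r_i+1,\ldots,r_k-1,\ldots)$ introduces a multiplicity $r_i+1$ when $|y_i|$ is even, and a cancelling pair of signs (hence $0$, using absence of $2$-torsion) when $|y_i|$ is odd. On the brace side, the same terms arise from expanding $x\{\ldots,y_k,d(y_k),\ldots\}_{\ldots,r_k-1,1,\ldots}$ by multilinearity, symmetry, and formula $(iv)$ of Theorem \ref{Cesaro}, which supplies the matching binomial coefficient $\binom{r_i+1}{1}=r_i+1$ and the matching vanishing in the odd case. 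Verifying that these two bookkeepings agree is the actual content of the converse in the paper (and is where the integral-domain and freeness hypotheses do their work beyond Theorem \ref{prelie1}); your proposal asserts the agreement without performing it, and the difficulty you flag at the end concerns only the collision-free situation of the direct implication.
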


\begin{proof}
    Let $x,y_1,\ldots,y_n\in L$. We set $E_{x,y_1,\ldots,y_n}$ to be the dg module generated by  elements $e,e_1,\ldots,e_n,f,f_1,\ldots,f_n$ such that $|e|=|x|,|e_1|=|y_1|,\ldots,|e_n|=|y_n|$ and $d(e)=f$, $d(e_1)=f_1,\ldots,d(e_n)=f_n$. We then have a morphism $\psi_{x,y_1,\ldots,y_n}:E_{x,y_1,\ldots,y_n}\longrightarrow L$ of dg modules defined by sending $e$ to $x$, and $e_i$ (resp. $f_i$) to $y_i$ (resp. $d(y_i)$) for every $1\leq i\leq n$. We set
    $$x\{y_{1},\ldots,y_{n}\}_{r_{1},\ldots,r_{n}}:=l(\Gamma(\mathcal{P}re\mathcal{L}ie,\psi_{x, y_{1},\ldots,y_{n}})(\mathcal{O}F_{\sum_i r_i}(e,\underbrace{e_{1},\ldots, e_{1}}_{r_{1}},\ldots,\underbrace{e_{n},\ldots, e_{n}}_{r_{n}}))),$$

    \noindent where $l$ is the $\Gamma(\mathcal{P}re\mathcal{L}ie,-)$-algebra structure on $L$. By forgetting the differentials and by applying Theorem \ref{prelie1}, we have that the operations $-\{-,...,-\}_{r_1,...,r_n}$ satisfy all the formulas of Theorem \ref{Cesaro}, with a sign. It only remains to prove the compatibility with the differential $d$.
    
    \begin{center}
        $\begin{array}{lll}
            d(x\{y_1,\ldots,y_n\}_{r_1,\ldots,r_n}) & = & dl(\Gamma(\mathcal{P}re\mathcal{L}ie,\psi_{x, y_{1},\ldots,y_{n}})(\mathcal{O}F_{\sum_i r_i}(e,\underbrace{e_1,\ldots,e_1}_{r_1}, \ldots,\underbrace{e_n,\ldots, e_n}_{r_n})))\\
            & = & l(\Gamma(\mathcal{P}re\mathcal{L}ie,\psi_{x, y_{1},\ldots,y_{n}})(d\mathcal{O}F_{\sum_i r_i}(e,\underbrace{e_1,\ldots, e_1}_{r_1}, \ldots,\underbrace{e_n,\ldots, e_n}_{r_n}))),
        \end{array}$
    \end{center}

    \noindent by commutation of $d$ with the algebra structure $l$ and $\psi_{x,y_1,...,y_n}$. Next, we claim that
    \begin{center}
        $\begin{array}{lll}
            d\mathcal{O}F_{\sum_i r_i}(e,\underbrace{e_1,\ldots, e_1}_{r_1}, \ldots,\underbrace{e_n,\ldots, e_n}_{r_n}) & = & \mathcal{O}F_{\sum_i r_i}(f,\underbrace{e_1,\ldots, e_1}_{r_1},\ldots,\underbrace{e_n,\ldots, e_n}_{r_n})\\
            & & \displaystyle+\sum_{k=1}^n\pm \mathcal{O}F_{\sum_i r_i}(e,\underbrace{e_1,\ldots, e_1}_{r_1},\ldots, \underbrace{e_k,\ldots, e_k}_{r_k-1}, f_k,\ldots,\underbrace{e_n, \ldots,e_n}_{r_n}).
        \end{array}$
    \end{center}

    \noindent Indeed, recall that 
    $$\mathcal{O}F_{\sum_i r_i}(e,\underbrace{e_1,\ldots, e_1}_{r_1}, \ldots,\underbrace{e_n,\ldots, e_n}_{r_n}) =\sum_{\sigma\in Sh(1,r_1,\ldots,r_n)}\sigma\cdot(F_{\sum_i r_i}\otimes e\otimes e_1^{\otimes r_1}\otimes\cdots\otimes e_n^{\otimes r_n}).$$

    \noindent We then have\\
    \quad\\
        $\displaystyle d\mathcal{O}F_{\sum_i r_i}(e,\underbrace{e_1,\ldots, e_1}_{r_1}, \ldots,\underbrace{e_n,\ldots, e_n}_{r_n})=\mathcal{O}F_{\sum_i r_i}(f,\underbrace{e_1,\ldots, e_1}_{r_1},\ldots,\underbrace{e_n,\ldots, e_n}_{r_n})$\\
        $$+\sum_{k=1}^n\sum_{\sigma\in Sh(1,r_1,\ldots,r_n)}\sum_{i=1}^{r_k}\pm\sigma\cdot(F_{\sum_i r_i}\otimes e\otimes e_1^{\otimes r_1}\otimes\cdots\otimes e_k^{\otimes i-1}\otimes f_k\otimes e_k^{r_k-i}\otimes\cdots\otimes e_n^{\otimes r_n}).$$

        \noindent Let $1\leq k\leq n$. For every $1\leq i\leq r_k$, we define $\tau_{k,i}$ as the permutation which permutes $f_k$ with the block $e_k^{\otimes r_k-i}$. We then obtain\\
        \quad\\
        $\displaystyle d\mathcal{O}F_{\sum_i r_i}(e,\underbrace{e_1,\ldots, e_1}_{r_1}, \ldots,\underbrace{e_n,\ldots, e_n}_{r_n})=\mathcal{O}F_{\sum_i r_i}(f,\underbrace{e_1,\ldots, e_1}_{r_1},\ldots,\underbrace{e_n,\ldots, e_n}_{r_n})$\\
        $$+\sum_{k=1}^n\sum_{\sigma\in Sh(1,r_1,\ldots,r_n)}\sum_{i=1}^{r_k}\pm\sigma\tau_{k,i}^{-1}\cdot(F_{\sum_i r_i}\otimes e\otimes e_1^{\otimes r_1}\otimes\cdots\otimes e_k^{\otimes r_k-1}\otimes f_k\otimes\cdots\otimes e_n^{\otimes r_n}).$$

        \noindent We note that, for every $1\leq i\leq r_k$, the permutation $\sigma\tau_{k,i}^{-1}$ is in $Sh(1,r_1,\ldots,r_k-1,1,\ldots,r_n)$. In the converse direction, if $\widetilde{\sigma}\in Sh(1,r_1,\ldots,r_k-1,1,\ldots,r_n)$, then there is a unique $1\leq i\leq r_k$ such that $\widetilde{\sigma}\tau_{k,i}\in Sh(1,r_1,\ldots,r_n)$. We thus have proved that\\
        \quad\\
        $\displaystyle d\mathcal{O}F_{\sum_i r_i}(e,\underbrace{e_1,\ldots, e_1}_{r_1}, \ldots,\underbrace{e_n,\ldots, e_n}_{r_n})=\mathcal{O}F_{\sum_i r_i}(f,\underbrace{e_1,\ldots, e_1}_{r_1},\ldots,\underbrace{e_n,\ldots, e_n}_{r_n})$\\
        $$+\sum_{k=1}^n\sum_{\widetilde{\sigma}\in Sh(1,r_1,\ldots,r_k-1,1,\ldots,r_n)}\pm\widetilde{\sigma}\cdot(F_{\sum_i r_i}\otimes e\otimes e_1^{\otimes r_1}\otimes\cdots\otimes e_k^{\otimes r_k-1}\otimes f_k\otimes\cdots\otimes e_n^{\otimes r_n})$$

        \noindent which gives
        \begin{center}
        $\begin{array}{lll}
            d\mathcal{O}F_{\sum_i r_i}(e,\underbrace{e_1,\ldots, e_1}_{r_1}, \ldots,\underbrace{e_n,\ldots, e_n}_{r_n}) & = & \mathcal{O}F_{\sum_i r_i}(f,\underbrace{e_1,\ldots, e_1}_{r_1},\ldots,\underbrace{e_n,\ldots, e_n}_{r_n})\\
            & & \displaystyle+\sum_{k=1}^n\pm \mathcal{O}F_{\sum_i r_i}(e,\underbrace{e_1,\ldots, e_1}_{r_1},\ldots, \underbrace{e_k,\ldots, e_k}_{r_k-1}, f_k,\ldots,\underbrace{e_n, \ldots,e_n}_{r_n}).
        \end{array}$
    \end{center}

    \noindent Applying $\Gamma(\mathcal{P}re\mathcal{L}ie,\psi_{x, y_{1},\ldots,y_{n}})$ and $l$ will then give the desired quantity, by definition of the weighted braces.\\

    Suppose now that $\mathbb{K}$ is an integral domain, that $L^k$ is free for every $k\in\mathbb{Z}$ and that $L$ is endowed with weighted brace operations. By Theorem \ref{prelie1}, and by forgetting the differential of $L$, we can define a morphism of graded modules $l:\Gamma(\mathcal{P}re\mathcal{L}ie,L)\longrightarrow L$ which is compatible with the monadic structure of $\Gamma(\mathcal{P}re\mathcal{L}ie,-)$. We now prove that $l$ commutes with the differential $d$. Since $d$ commutes with the monadic structure, it is sufficient to prove the commutation with $d$ when reducing to corollas. Let $x, y_1,\ldots, y_n$ be some basis elements with $y_i\neq y_j$ if $i\neq j$ and $r_1,\ldots, r_n\geq 1$. We then have
    \begin{center}
        $\begin{array}{lll}
            dl(\mathcal{O} F_{\sum_i r_i}(x,\underbrace{y_1,\ldots,y_1}_{r_1},\ldots,\underbrace{y_n,\ldots,y_n}_{r_n})) & = & d(x\{y_1,\ldots,y_n\}_{r_1,\ldots,r_n})\\
            & = & \displaystyle d(x)\{y_1,\ldots,y_n\}_{r_1,\ldots,r_n}\\
            & & \displaystyle+\sum_{k=1}^n\pm x\{y_1,\ldots,y_k,d(y_k),\ldots,y_n\}_{r_1,\ldots,r_k-1,1,\ldots,r_n}.\\
        \end{array}$
    \end{center}

    \noindent We decompose $d(y_k)$ in the chosen basis, and write
    $$d(y_k)=\sum_{\substack{i=1\\ i\neq k}}^n\lambda_{k,i} y_i+f_k,$$

    \noindent where $f_k=0$ or $f_k\notin Vect(y_1,\ldots,y_n)$ (note that $y_k$ cannot appear in the decomposition of $d(y_k)$ for degree reason). This gives
    \begin{center}
        $\begin{array}{lll}
            dl(\mathcal{O} F_{\sum_i r_i}(x,\underbrace{y_1,\ldots,y_1}_{r_1},\ldots,\underbrace{y_n,\ldots,y_n}_{r_n})) & = & d(x\{y_1,\ldots,y_n\}_{r_1,\ldots,r_n})\\
            & = & \displaystyle d(x)\{y_1,\ldots,y_n\}_{r_1,\ldots,r_n}\\
            & & \displaystyle+\sum_{k=1}^n\sum_{\substack{i=1\\ i\neq k}}^n\pm\lambda_{k,i}x\{y_1,\ldots,y_k,y_i,\ldots,y_n\}_{r_1,\ldots,r_k-1,1,\ldots,r_n}\\
            & & \displaystyle+\sum_{k=1}^n\pm x\{y_1,\ldots,y_k,f_k,\ldots,y_n\}_{r_1,\ldots,r_k-1,1,\ldots,r_n}
        \end{array}$
    \end{center}

    \noindent and then, by using the symmetry relations,
    \begin{center}
        $\begin{array}{lll}
            dl(\mathcal{O} F_{\sum_i r_i}(x,\underbrace{y_1,\ldots,y_1}_{r_1},\ldots,\underbrace{y_n,\ldots,y_n}_{r_n})) & = & d(x\{y_1,\ldots,y_n\}_{r_1,\ldots,r_n})\\
            & = & \displaystyle d(x)\{y_1,\ldots,y_n\}_{r_1,\ldots,r_n}\\
            & & \displaystyle+\sum_{k=1}^n\sum_{\substack{i=1\\ i\neq k}}^n\pm\lambda_{k,i}x\{y_1,\ldots,y_i,y_i,\ldots,y_k,\ldots,y_n\}_{r_1,\ldots,r_i,1,\ldots,r_k-1,\ldots,r_n}\\
            & & \displaystyle+\sum_{k=1}^n\pm x\{y_1,\ldots,y_k,f_k,\ldots,y_n\}_{r_1,\ldots,r_k-1,1,\ldots,r_n}
        \end{array}$
    \end{center}

    \noindent The second sum can be simplified, depending on the parity of $|y_i|$ for every $i$. If $|y_i|$ is even, then by formula $(iv)$ of Theorem \ref{Cesaro}, we have
    $$x\{y_1,\ldots,y_i,y_i,\ldots,y_k,\ldots,y_n\}_{r_1,\ldots,r_i,1,\ldots,r_k-1,\ldots,r_n}=(r_i+1)x\{y_1,\ldots,y_i,\ldots,y_k,\ldots,y_n\}_{r_1,\ldots,r_i+1,\ldots,r_k-1,\ldots,r_n}.$$

    \noindent If $|y_i|$ is odd, then since we have supposed that any odd degree element has an associated weight equal to $1$, we have that $r_i=1$. By using the symmetry relation, and using that we have no $2$-torsion elements since $L$ is free and that $\mathbb{K}$ is an integral domain, we have that
    $$x\{y_1,\ldots,y_i,y_i,\ldots,y_k,\ldots,y_n\}_{r_1,\ldots,1,1,\ldots,r_k-1,\ldots,r_n}=0.$$

    \noindent We finally have that
    \begin{center}
        $\begin{array}{lll}
            dl(\mathcal{O} F_{\sum_i r_i}(x,\underbrace{y_1,\ldots,y_1}_{r_1},\ldots,\underbrace{y_n,\ldots,y_n}_{r_n})) & = & d(x\{y_1,\ldots,y_n\}_{r_1,\ldots,r_n})\\
            & = & \displaystyle d(x)\{y_1,\ldots,y_n\}_{r_1,\ldots,r_n}\\
            & + & \displaystyle\sum_{k=1}^n\sum_{\substack{i=1\\ i\neq k}}^n\pm\delta_i\lambda_{k,i}(r_i+1)x\{y_1,\ldots,y_i,\ldots,y_k,\ldots,y_n\}_{r_1,\ldots,r_i+1,\ldots,r_k-1,\ldots,r_n}\\
            & + & \displaystyle\sum_{k=1}^n\pm x\{y_1,\ldots,y_k,f_k,\ldots,y_n\}_{r_1,\ldots,r_k-1,1,\ldots,r_n}
        \end{array}$
    \end{center}

    \noindent where $\delta_i=0$ if $|y_i|$ is odd, and $\delta_i=1$ else.\\

    We now compute $ld(\mathcal{O} F_{\sum_i r_i}(x,\underbrace{y_1,\ldots,y_1}_{r_1},\ldots,\underbrace{y_n,\ldots,y_n}_{r_n}))$. By the same computations as the beginning of the proof, we have that\\
    \quad\\
    $\displaystyle d\mathcal{O}F_{\sum_i r_i}(x,\underbrace{y_1,\ldots, y_1}_{r_1}, \ldots,\underbrace{y_n,\ldots, y_n}_{r_n})=\mathcal{O}F_{\sum_i r_i}(d(x),\underbrace{y_1,\ldots, y_1}_{r_1},\ldots,\underbrace{y_n,\ldots, y_n}_{r_n})$\\
        $$+\sum_{k=1}^n\sum_{\widetilde{\sigma}\in Sh(1,r_1,\ldots,r_k-1,1,\ldots,r_n)}\pm\widetilde{\sigma}\cdot(F_{\sum_i r_i}\otimes x\otimes y_1^{\otimes r_1}\otimes\cdots\otimes y_k^{\otimes r_k-1}\otimes d(y_k)\otimes\cdots\otimes y_n^{\otimes r_n}).$$

    \noindent We now fix $1\leq k\leq n$. We aim to compute the sum
$$\sum_{\widetilde{\sigma}\in Sh(1,r_1,\ldots,r_k-1,1,\ldots,r_n)}\pm\widetilde{\sigma}\cdot(F_{\sum_i r_i}\otimes x\otimes y_1^{\otimes r_1}\otimes\cdots\otimes y_k^{\otimes r_k-1}\otimes d(y_k)\otimes\cdots\otimes y_n^{\otimes r_n}).$$
    
    \noindent We use the decomposition of $d(y_k)$ in the chosen basis. The $f_k$ part will precisely give
    $$\mathcal{O}F_{\sum_i r_i}(x,\underbrace{y_1,\ldots,y_1}_{r_1},\ldots,\underbrace{y_k,\ldots,y_k}_{r_k-1},f_k,\ldots,\underbrace{y_n,\ldots,y_n}_{r_n}).$$

    \noindent We now look at the other terms. These terms give the sum
    $$\sum_{\substack{i=1\\ i\neq k}}^n\sum_{\widetilde{\sigma}\in Sh(1,r_1,\ldots,r_k-1,1,\ldots,r_n)}\pm\lambda_{k,i}(\widetilde{\sigma}\cdot(F_{\sum_i r_i}\otimes x\otimes y_1^{\otimes r_1}\otimes \cdots\otimes y_k^{\otimes r_k-1}\otimes y_i\otimes\cdots\otimes y_n^{\otimes r_n})).$$

    \noindent By putting the only $y_i$ with the others, we find that this sum is equal to
    $$\sum_{\substack{i=1\\ i\neq k}}^n\sum_{\widetilde{\sigma}\in Sh(1,r_1,\ldots,r_i,1,\ldots,r_k-1,\ldots,r_n)}\pm\lambda_{k,i}(\widetilde{\sigma}\cdot(F_{\sum_i r_i}\otimes x\otimes y_1^{\otimes r_1}\otimes\cdots\otimes y_i^{\otimes r_i}\otimes y_i\otimes\cdots\otimes y_k^{\otimes r_k-1}\otimes\cdots\otimes y_n^{\otimes r_n})).$$

    \noindent We now use that for every $\widetilde{\sigma}\in Sh(1,r_1,\ldots,r_i,1,\ldots,r_{k}-1,\ldots,r_n)$, there exists a unique permutation permutation $\tau_{i,j}$, defined by inserting the last occurrence of $y_i$ among $\underbrace{y_i\otimes\cdots\otimes y_i}_{r_i}$ in position $j$ (so that $1\leq j\leq r_i+1$), such that $\widetilde{\sigma}\tau_{i,j}^{-1}\in Sh(1,r_1,\ldots,r_i+1,\ldots,r_k-1,\ldots,r_n)$. In the other direction, for every $\sigma\in Sh(1,r_1,\ldots,r_i+1,\ldots,r_k-1,\ldots,r_n)$ and $1\leq j\leq r_i+1$, we have that $\sigma\tau_{i,j}\in Sh(1,r_1,\ldots,r_i,1,\ldots,r_k-1,\ldots,r_n)$. We thus obtain the sum
    $$\sum_{\substack{i=1\\ i\neq k}}^n\sum_{{\sigma}\in Sh(1,r_1,\ldots,r_i+1,\ldots,r_k-1,\ldots,r_n)}\sum_{j=1}^{r_i+1}\pm\lambda_{k,i}({\sigma\tau_{i,j}}\cdot(F_{\sum_i r_i}\otimes x\otimes y_1^{\otimes r_1}\otimes\cdots\otimes y_i^{\otimes r_i+1}\otimes\cdots\otimes y_k^{\otimes r_k-1}\otimes\cdots\otimes y_n^{\otimes r_n})).$$
    
    \noindent For a fixed $i\neq k$, we need to distinguish two cases: either $|y_i|$ is even, or $|y_i|$ is odd. In the first case, we have that this sum is
    $$\sum_{\substack{i=1\\ i\neq k}}^n\sum_{{\sigma}\in Sh(1,r_1,\ldots,r_i+1,\ldots,r_k-1,\ldots,r_n)}\sum_{j=1}^{r_i+1}\pm\lambda_{k,i}({\sigma}\cdot(F_{\sum_i r_i}\otimes x\otimes y_1^{\otimes r_1}\otimes\cdots\otimes y_i^{\otimes r_i+1}\otimes\cdots\otimes y_k^{\otimes r_k-1}\otimes\cdots\otimes y_n^{\otimes r_n}))$$

    \noindent which is precisely
$$\sum_{\substack{i=1\\ i\neq k}}^n\pm\lambda_{k,i} (r_i+1)\mathcal{O} F_{\sum_i r_i}(x,\underbrace{y_1,\ldots,y_1}_{r_1},\ldots,\underbrace{y_i,\ldots,y_i}_{r_i+1},\ldots,\underbrace{y_k,\ldots,y_k}_{r_k-1},\ldots,\underbrace{y_n,\ldots,y_n}_{r_n}).$$

\noindent In the second case, since we have supposed that every odd degree element has an associated weight equal to $1$, we have that $r_i=1$. We have that $\tau_{i,1}$ permutes the two $y_i$'s, which gives a sign, while $\tau_{i,2}=id$. The sum of the two obtained elements is then $0$.\\
    
    \noindent We thus have proved that
    \quad\\

    \noindent $d\mathcal{O} F_{\sum_i r_i}(x,\underbrace{y_1,\ldots,y_1}_{r_1},\ldots,\underbrace{y_n,\ldots,y_n}_{r_n}) = \mathcal{O} F_{\sum_i r_i}(d(x),\underbrace{y_1,\ldots,y_1}_{r_1},\ldots,\underbrace{y_n,\ldots,y_n}_{r_n})$
    \begin{flushright}
        $\displaystyle + \sum_{k=1}^n\sum_{\substack{i=1\\ i\neq k}}^n\pm\delta_i\lambda_{k,i} (r_i+1)\mathcal{O} F_{\sum_i r_i}(x,\underbrace{y_1,\ldots,y_1}_{r_1},\ldots,\underbrace{y_i,\ldots,y_i}_{r_i+1},\ldots,\underbrace{y_k,\ldots,y_k}_{r_k-1},\ldots,\underbrace{y_n,\ldots,y_n}_{r_n})$\\
    \end{flushright}
    \begin{center}$\displaystyle +\sum_{k=1}^n\pm\mathcal{O} F_{\sum_i r_i}(x,\underbrace{y_1,\ldots,y_1}_{r_1},\ldots,\underbrace{y_k,\ldots,y_k}_{r_k-1}, f_k,\ldots,\underbrace{y_n,\ldots,y_n}_{r_n}).$\end{center}

    \noindent where we have set $\delta_i=0$ if $|y_i|$ is odd, and $\delta_i=1$ else. By definition of $l$, we have
    \begin{center}
        $\begin{array}{lll}
            l(d\mathcal{O} F_{\sum_i r_i}(x,\underbrace{y_1,\ldots,y_1}_{r_1},\ldots,\underbrace{y_n,\ldots,y_n}_{r_n})) & = &d(x)\{y_1,\ldots,y_n\}_{r_1,\ldots,r_n} \\
           & + &\displaystyle \sum_{k=1}^n\sum_{\substack{i=1\\ i\neq k}}^n\pm\delta_i\lambda_{k,i} (r_i+1)x\{y_1,\ldots,y_i,\ldots,y_k,\ldots,y_n\}_{r_1,\ldots,r_{i}+1,\ldots,r_i-1,\ldots,r_n}\\
           & + & \displaystyle \sum_{k=1}^n\pm x\{y_1,\ldots,y_k, f_k,\ldots,y_n\}_{r_1,\ldots,r_k-1,1,\ldots,r_n}.
        \end{array}$
    \end{center}

    \noindent which proves that $l$ commutes with $d$.
\end{proof}

We then deduce from Propositions \ref{prelie1} and \ref{prelie2} that every differential graded pre-Lie algebra with divided powers is in particular a differential graded pre-Lie algebra, with
$$x\star y=x\{y\}_{1}.$$

\begin{remarque}\label{rembraces}
If $\mathbb{Q}\subset\mathbb{K}$ and if $L$ is a differential graded pre-Lie algebra, then $L$ is a differential graded pre-Lie algebra with divided powers whose weighted braces are explicitly given by
$$x\{y_{1},\ldots,y_{n}\}_{r_{1},\ldots,r_{n}}=\frac{1}{\prod_{i}r_{i}!}x\{\underbrace{y_{1},\ldots,y_{1}}_{r_{1}},\ldots,\underbrace{y_{n},\ldots,y_{n}}_{r_{n}}\}$$

\noindent in terms of symmetric braces.
\end{remarque}

\begin{remarque}
    Every morphism of $\Gamma(\mathcal{P}re\mathcal{L}ie,-)$-algebras preserves the weighted braces:
    $$f(x\{y_{1},\ldots,y_{n}\}_{r_{1},\ldots,r_{n}})=f(x)\{f(y_{1}),\ldots,f(y_{n})\}_{r_{1},\ldots,r_{n}}.$$
\end{remarque}

In order to perform infinite sums, we define the notion of a \textit{complete} $\Gamma(\mathcal{P}re\mathcal{L}ie,-)$-algebra. We recall the following definition.

\begin{defi}
    A {\normalfont filtered} dg module is the data of a dg module $L$ with inclusions of dg modules
    $$\cdots\subset F_n L\subset F_{n-1}L\subset\cdots\subset F_1L =L.$$

    A filtered dg module is {\normalfont complete} if the morphism $L\longrightarrow \lim_{n\geq 1} L/F_nL$ is an isomorphism.
\end{defi}

In general, for every filtered dg module $L$, the dg module $\widehat{L}=\lim_{n\geq 1}L/F_nL$ is a complete dg module with the filtration $F_n\widehat{L}=Ker(\widehat{L}\longrightarrow L/F_nL)$, since $\widehat{L}/F_n\widehat{L}\simeq L/F_nL$.\\

We now define the notion of a filtered $\Gamma(\mathcal{P}re\mathcal{L}ie,-)$-algebra.

\begin{defi}\label{filteredgamma}
    A {\normalfont filtered} $\Gamma(\mathcal{P}re\mathcal{L}ie,-)$-algebra is a $\Gamma(\mathcal{P}re\mathcal{L}ie,-)$-algebra endowed with a filtration preserved by the weighted braces in the sense that
    $$F_{k}L\{F_{k_{1}}L,\ldots,F_{k_{n}}L\}_{r_{1},\ldots,r_{n}}\subset F_{k+k_{1}r_{1}+\cdots+k_{n}r_{n}}L.$$

    A filtered $\Gamma(\mathcal{P}re\mathcal{L}ie,-)$-algebra is {\normalfont complete} if $L$ is complete as a filtered dg module.
\end{defi}

If $L$ is a filtered $\Gamma(\mathcal{P}re\mathcal{L}ie,-)$-algebra, then the weighted braces $-\{-,\ldots,-\}_{r_1,\ldots,r_n}$ induce weighted braces on the completion $\widehat{L}=\lim_{n\geq 1}L/F_nL$, which satisfy the formulas of Theorems \ref{prelie1} and \ref{prelie2}, and preserve the filtration on $\widehat{L}$, so that $\widehat{L}$ forms a complete $\Gamma(\mathcal{P}re\mathcal{L}ie,-)$-algebra (provided that we work over a field).

\subsubsection{Examples of $\Gamma(\mathcal{P}re\mathcal{L}ie,-)$-algebras}

We give examples of dg pre-Lie algebras with divided powers. The first examples are given by dg brace algebras, following the idea of the proof in the non graded framework in \cite{cesaro}.

\begin{defi}
    A {\normalfont differential graded brace algebra} is a differential graded module $L$ endowed with brace operations
    $$-\langle-,\ldots,-\rangle:L^{\otimes n+1}\longrightarrow L$$

    \noindent which are compatible with the differential $d$:
    $$d(f\langle g_{1},\ldots,g_{n}\rangle)=d(f)\langle g_{1},\ldots,g_{n}\rangle+\sum_{k=1}^{n}\pm f\langle g_{1},\ldots,d(g_{k}),\ldots,g_{n}\rangle,$$

    \noindent and such that $f\langle\rangle=f$ and
    $$f\langle g_1,\ldots,g_n\rangle\langle h_1,\ldots,h_r\rangle=\sum \pm f\langle H_1,g_1\langle H_2\rangle,\ldots,H_{2n-1}, g_n\langle H_{2n}\rangle,H_{2n+1}\rangle,$$

    \noindent where the sum is over all consecutive subsets $H_1\sqcup H_2\sqcup \cdots\sqcup H_{2n+1}=\{h_1,\ldots,h_r\}$, and the sign is yielded by the permutation of the $g_i$'s with the $h_j$'s.
\end{defi}

The operad which governs brace algebras is denoted by $\mathcal{B}race$, and is defined, in arity $n$, as the $\mathbb{K}$-module spanned by the planar n-trees, i.e. trees with an order on the set of inputs for each vertex (see \cite[$\mathsection$6.1]{cesaro} or \cite[$\mathsection$2]{chapotonbis} for some details on the operad $\mathcal{B}race$).\\

This operad allows us to represent all operations in brace algebras by the action of a planar tree, or by a planar tree labeled with the inputs. For instance, we have

\begin{equation*}
    \begin{tikzpicture}[baseline={([yshift=-.5ex]current bounding box.center)},scale=0.6]
    \node[draw,circle,scale=0.6] (i) at (0,0) {$f$};
    \node[draw,circle,scale=0.6] (1) at (-1,1) {$g_1$};
    \node[draw,circle,scale=0.6] (1b) at (-1.5,2) {$h_1$};
    \node[draw,circle,scale=0.6] (1bb) at (-0.5,2) {$h_2$};
    \node[draw,circle,scale=0.6] (2) at (0,1) {$g_2$};
    \node[draw,circle,scale=0.6] (3) at (1,1) {$g_3$};
    \node[draw,circle,scale=0.6] (3b) at (1,2) {$h_{3}$};
    \draw (2) -- (i);
    \draw (1) -- (1b);
    \draw (1) -- (1bb);
    \draw (i) -- (1);
    \draw (i) -- (3);
    \draw (3) -- (3b);
    \end{tikzpicture}=f\langle g_{1}\langle h_{1},h_{2}\rangle,g_{2},g_{3}\langle h_{3}\rangle\rangle.
\end{equation*}

\begin{remarque}
    Because the action of the symmetric groups on $\mathcal{B}race$ is free, we have that the trace map induces an isomorphism of monads $Tr:\mathcal{S}(\mathcal{B}race,-)\longrightarrow\Gamma(\mathcal{B}race,-)$.
\end{remarque}

We have an inclusion
$$i:\mathcal{P}re\mathcal{L}ie\hookrightarrow \mathcal{B}race$$

\noindent defined by the \textit{symmetrization} of trees. Namely, $i$ is obtained by summing over all possible ways to write a given tree $t$ as a planar tree. For instance:
\begin{equation*}
    i\left(
    \begin{tikzpicture}[baseline={([yshift=-.5ex]current bounding box.center)},scale=0.6]
    \node[draw,circle,scale=0.6] (i) at (0,0) {$1$};
    \node[draw,circle,scale=0.6] (1) at (-1,1) {$2$};
    \node[draw,circle,scale=0.6] (2) at (0,1) {$3$};
    \node[draw,circle,scale=0.6] (3) at (1,1) {$4$};
    \draw (2) -- (i);
    \draw (i) -- (1);
    \draw (i) -- (3);
    \end{tikzpicture}\right)=\begin{tikzpicture}[baseline={([yshift=-.5ex]current bounding box.center)},scale=0.6]
    \node[draw,circle,scale=0.6] (i) at (0,0) {$1$};
    \node[draw,circle,scale=0.6] (1) at (-1,1) {$2$};
    \node[draw,circle,scale=0.6] (2) at (0,1) {$3$};
    \node[draw,circle,scale=0.6] (3) at (1,1) {$4$};
    \draw (2) -- (i);
    \draw (i) -- (1);
    \draw (i) -- (3);
    \end{tikzpicture}+\begin{tikzpicture}[baseline={([yshift=-.5ex]current bounding box.center)},scale=0.6]
    \node[draw,circle,scale=0.6] (i) at (0,0) {$1$};
    \node[draw,circle,scale=0.6] (1) at (-1,1) {$3$};
    \node[draw,circle,scale=0.6] (2) at (0,1) {$2$};
    \node[draw,circle,scale=0.6] (3) at (1,1) {$4$};
    \draw (2) -- (i);
    \draw (i) -- (1);
    \draw (i) -- (3);
    \end{tikzpicture}+\begin{tikzpicture}[baseline={([yshift=-.5ex]current bounding box.center)},scale=0.6]
    \node[draw,circle,scale=0.6] (i) at (0,0) {$1$};
    \node[draw,circle,scale=0.6] (1) at (-1,1) {$4$};
    \node[draw,circle,scale=0.6] (2) at (0,1) {$3$};
    \node[draw,circle,scale=0.6] (3) at (1,1) {$2$};
    \draw (2) -- (i);
    \draw (i) -- (1);
    \draw (i) -- (3);
    \end{tikzpicture}+\begin{tikzpicture}[baseline={([yshift=-.5ex]current bounding box.center)},scale=0.6]
    \node[draw,circle,scale=0.6] (i) at (0,0) {$1$};
    \node[draw,circle,scale=0.6] (1) at (-1,1) {$2$};
    \node[draw,circle,scale=0.6] (2) at (0,1) {$4$};
    \node[draw,circle,scale=0.6] (3) at (1,1) {$3$};
    \draw (2) -- (i);
    \draw (i) -- (1);
    \draw (i) -- (3);
    \end{tikzpicture}+\begin{tikzpicture}[baseline={([yshift=-.5ex]current bounding box.center)},scale=0.6]
    \node[draw,circle,scale=0.6] (i) at (0,0) {$1$};
    \node[draw,circle,scale=0.6] (1) at (-1,1) {$4$};
    \node[draw,circle,scale=0.6] (2) at (0,1) {$2$};
    \node[draw,circle,scale=0.6] (3) at (1,1) {$3$};
    \draw (2) -- (i);
    \draw (i) -- (1);
    \draw (i) -- (3);
    \end{tikzpicture}+\begin{tikzpicture}[baseline={([yshift=-.5ex]current bounding box.center)},scale=0.6]
    \node[draw,circle,scale=0.6] (i) at (0,0) {$1$};
    \node[draw,circle,scale=0.6] (1) at (-1,1) {$3$};
    \node[draw,circle,scale=0.6] (2) at (0,1) {$4$};
    \node[draw,circle,scale=0.6] (3) at (1,1) {$2$};
    \draw (2) -- (i);
    \draw (i) -- (1);
    \draw (i) -- (3);
    \end{tikzpicture}\ \ .
\end{equation*}

The map $i$ induces a morphism of monads that can be used to define a $\Gamma(\mathcal{P}re\mathcal{L}ie,-)$-algebra structure on every dg brace algebra $L$, given by the following composition:

\[\begin{tikzcd}
	{\Gamma(\mathcal{P}re\mathcal{L}ie,L)} & {\Gamma(\mathcal{B}race,L)} & {\mathcal{S}(\mathcal{B}race,L)} & L,
	\arrow["{\Gamma(i,L)}", from=1-1, to=1-2]
	\arrow["\simeq", from=1-3, to=1-2]
	\arrow["Tr"', from=1-3, to=1-2]
	\arrow["l", from=1-3, to=1-4]
\end{tikzcd}\]

\noindent where we denote by $l:\mathcal{S}(\mathcal{P}re\mathcal{L}ie,L)\longrightarrow L$ the $\mathcal{B}race$-algebra structure. We aim to compute the weighted braces.

\begin{thm}\label{braces}
    Every dg brace algebra $L$ is endowed with a $\Gamma(\mathcal{P}re\mathcal{L}ie,-)$-algebra structure. Moreover, weighted braces $-\{-,\ldots,-\}_{r_{1},\ldots,r_{n}}$ are explicitly given by
    $$f\{g_{1},\ldots,g_{n}\}_{r_{1},\ldots,r_{n}}=\sum_{\sigma\in Sh(r_{1},\ldots,r_{n})}\pm f\langle \overline{g}_{\sigma^{-1}(1)},\ldots,\overline{g}_{\sigma^{-1}(r)}\rangle,$$

    \noindent where we have set $r=\sum_{i}r_{i}$ and $(\overline{g}_{1},\ldots,\overline{g}_{r})=(\underbrace{g_{1},\ldots,g_{1}}_{r_{1}},\ldots,\underbrace{g_{n},\ldots,g_{n}}_{r_{n}})$.
\end{thm}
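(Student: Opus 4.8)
The plan is as follows. For the first assertion there is essentially nothing to do beyond what precedes the statement: the composite $l\circ Tr^{-1}\circ\Gamma(i,L)$ depicted there is a well-defined $\Gamma(\mathcal{P}re\mathcal{L}ie,-)$-algebra structure on $L$ because $Tr$ is an isomorphism of monads (the symmetric groups act freely on $\mathcal{B}race$), $\Gamma(i,-)$ is a morphism of monads, and restricting the structure map $l$ of an $\mathcal{S}(\mathcal{B}race,-)$-algebra along the monad morphism $Tr^{-1}\circ\Gamma(i,-)$ yields a $\Gamma(\mathcal{P}re\mathcal{L}ie,-)$-algebra; this is manifestly functorial in $L$. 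So the substance of the theorem is the explicit formula, which I would prove by unravelling the definition of the weighted braces directly (without passing to characteristic $0$, so that the integral statement comes out).

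By the Construction following Theorem \ref{Cesaro}, in the dg form of Theorem \ref{prelie2}, we have $f\{g_1,\ldots,g_n\}_{r_1,\ldots,r_n}=l_{\Gamma}\big(\Gamma(\mathcal{P}re\mathcal{L}ie,\psi)(\mathcal{O}F_r(e,\underbrace{e_1,\ldots,e_1}_{r_1},\ldots,\underbrace{e_n,\ldots,e_n}_{r_n}))\big)$, with $r=\sum_i r_i$, where $\psi=\psi_{f,g_1,\ldots,g_n}$ sends the distinct generators $e,e_1,\ldots,e_n$ of the free dg module $E_{f,g_1,\ldots,g_n}$ to $f,g_1,\ldots,g_n$ and $l_{\Gamma}=l\circ Tr^{-1}\circ\Gamma(i,L)$. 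Using naturality of $\Gamma(i,-)$ and of $Tr$ this rewrites as $l\big(\mathcal{S}(\mathcal{B}race,\psi)\,Tr_E^{-1}\,\Gamma(i,E)(\mathcal{O}F_r(e,\ldots))\big)$, so it suffices to compute $Tr_E^{-1}\,\Gamma(i,E)(\mathcal{O}F_r(e,\ldots))$ in $\mathcal{S}(\mathcal{B}race,E)$; in particular we may as well take $f,g_1,\ldots,g_n$ pairwise distinct, which I now do to lighten notation. Recall (as in the proof of Theorem \ref{prelie2}) that $\mathcal{O}F_r(f,\overline{g}_1,\ldots,\overline{g}_r)=\sum_{\sigma\in Sh(1,r_1,\ldots,r_n)}\pm\,\sigma\cdot(F_r\otimes f\otimes g_1^{\otimes r_1}\otimes\cdots\otimes g_n^{\otimes r_n})$, the sign being the Koszul sign, and that applying $\Gamma(i,-)$ replaces each corolla $\sigma\cdot F_r$ by the sum of its $r!$ planar refinements.

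Now fix the planar corolla $C^\ast\in\mathcal{B}race(r+1)$ with root the vertex $1$ and with leaves the vertices $2,3,\ldots,r+1$ listed in this order. The corolla-shaped basis trees of $\mathcal{B}race(r+1)$ form a single free $\Sigma_{r+1}$-orbit, so the sub-$\Sigma_{r+1}$-module of $\mathcal{B}race(r+1)\otimes L^{\otimes r+1}$ that they span is free (after untwisting the Koszul signs), and on a free module every invariant equals the trace of the coinvariant class of its $C^\ast$-component. Hence $Tr^{-1}\,\Gamma(i,L)(\mathcal{O}F_r(\ldots))$ is the coinvariant class $[C^\ast\otimes v]$, where $v$ is the coefficient of the basis tree $C^\ast$ in $\Gamma(i,L)(\mathcal{O}F_r(\ldots))$. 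A term $\sigma\cdot(C\otimes f\otimes g_1^{\otimes r_1}\otimes\cdots)$ there, with $C$ a planar refinement of $F_r$ and $\sigma\in Sh(1,r_1,\ldots,r_n)$, contributes to $C^\ast$ exactly when $\sigma$ fixes the slot of $f$, i.e.\ $\sigma(1)=1$, and $C$ is then the unique refinement whose $\sigma$-image is $C^\ast$. The shuffles $\sigma\in Sh(1,r_1,\ldots,r_n)$ with $\sigma(1)=1$ are in canonical bijection with $Sh(r_1,\ldots,r_n)$ (delete the length-one block), and for such $\sigma$ the decoration read off $C^\ast$ is $f\otimes\overline{g}_{\sigma^{-1}(1)}\otimes\cdots\otimes\overline{g}_{\sigma^{-1}(r)}$ with the Koszul sign of that permutation of the $\overline{g}_j$'s. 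Therefore $v=\sum_{\sigma\in Sh(r_1,\ldots,r_n)}\pm\,f\otimes\overline{g}_{\sigma^{-1}(1)}\otimes\cdots\otimes\overline{g}_{\sigma^{-1}(r)}$, and since $l$ sends $C^\ast\otimes(u_0\otimes u_1\otimes\cdots\otimes u_r)$ to $u_0\langle u_1,\ldots,u_r\rangle$, applying $l$ (through $\mathcal{S}(\mathcal{B}race,\psi)$) gives exactly $f\{g_1,\ldots,g_n\}_{r_1,\ldots,r_n}=\sum_{\sigma\in Sh(r_1,\ldots,r_n)}\pm\,f\langle\overline{g}_{\sigma^{-1}(1)},\ldots,\overline{g}_{\sigma^{-1}(r)}\rangle$.

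The main obstacle is precisely the bookkeeping hidden in the last paragraph: one must verify that composing the orbit sum, the symmetrization $i$ of the corolla and the inverse trace map collapses the a priori much larger index set $Sh(1,r_1,\ldots,r_n)\times\Sigma_r$ down to $Sh(r_1,\ldots,r_n)$, and that all signs produced along the way are the expected Koszul signs; one must also treat the case $char(\mathbb{K})\neq2$ with odd-degree inputs (which then carry weight $1$) along the same lines as Theorems \ref{prelie1}--\ref{prelie2}. As a consistency check, over a ring containing $\mathbb{Q}$ the identity also follows from Remark \ref{rembraces} together with the fact that, because $i$ symmetrizes corollas, the symmetric braces of the pre-Lie algebra underlying a brace algebra are $x\{y_1,\ldots,y_r\}=\sum_{\tau\in\Sigma_r}\pm\,x\langle y_{\tau(1)},\ldots,y_{\tau(r)}\rangle$, after grouping the $\prod_i r_i!$ permutations inside each coset of $\Sigma_{r_1}\times\cdots\times\Sigma_{r_n}$.
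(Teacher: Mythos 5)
Your proposal is correct and follows the paper's own proof almost step for step: the same composite structure map $l\circ Tr^{-1}\circ\Gamma(i,L)$, the same expansion of $\mathcal{O}F_{r}$ as a sum over $Sh(1,r_{1},\ldots,r_{n})$, the same symmetrization of the corolla inside $\mathcal{B}race$, and the same final collapse of the index set to $Sh(r_{1},\ldots,r_{n})$ via the condition $\sigma(1)=1$. The only variation is in how you invert the trace map — you read off the coefficient at the standard planar corolla using freeness of the corolla orbit, whereas the paper guesses the preimage $y$ and verifies $Tr(y)=x$ through an explicit bijection of index sets — but these are the same computation organized differently.
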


\begin{proof}
    Let $L$ be a brace algebra. As seen before, we have a $\Gamma(\mathcal{P}re\mathcal{L}ie,-)$-algebra structure on $L$ given by the composite
\[\begin{tikzcd}
	{\Gamma(\mathcal{P}re\mathcal{L}ie,L)} & {\Gamma(\mathcal{B}race,L)} & {\mathcal{S}(\mathcal{B}race,L)} & L
	\arrow["{\Gamma(i,L)}", from=1-1, to=1-2]
	\arrow["\simeq", from=1-3, to=1-2]
	\arrow["Tr"', from=1-3, to=1-2]
	\arrow["l", from=1-3, to=1-4]
\end{tikzcd}\]

    \noindent where $l:\mathcal{S}(\mathcal{B}race,L)\longrightarrow L$ is the brace algebra structure. We now compute the weighted braces. Let $f,g_1,\ldots,g_n\in L$ be homogeneous elements with $g_i\neq g_j$ whenever $i\neq j$ and $r_1,\ldots,r_n\geq 0$ (recall that we have suppose that, in the situation $char(\mathbb{K})\neq 2$, any odd degree element has an associated weight equaled to $0$ or $1$). We set $E=E_{f,g_1,\ldots,g_n}$ and $\psi=\psi_{f,g_1,\ldots,g_n}$ (see the proof of Theorem \ref{prelie2}). We use the following commutative diagram
\[\begin{tikzcd}
	{\Gamma(\mathcal{P}re\mathcal{L}ie,L)} & {\Gamma(\mathcal{B}race,L)} & {\mathcal{S}(\mathcal{B}race,L)} & L \\
	{\Gamma(\mathcal{P}re\mathcal{L}ie,E)} & {\Gamma(\mathcal{B}race,E)} & {\mathcal{S}(\mathcal{B}race,E).}
	\arrow["{\Gamma(i,L)}", from=1-1, to=1-2]
	\arrow["\simeq", from=1-3, to=1-2]
	\arrow["Tr"', from=1-3, to=1-2]
	\arrow["l", from=1-3, to=1-4]
	\arrow["{\Gamma(\mathcal{P}re\mathcal{L}ie,\psi)}", from=2-1, to=1-1]
	\arrow["{\Gamma(i,E)}"', from=2-1, to=2-2]
	\arrow["{\Gamma(\mathcal{B}race,\psi)}", from=2-2, to=1-2]
	\arrow["{\mathcal{S}(\mathcal{B}race,\psi)}", from=2-3, to=1-3]
	\arrow["\simeq"', from=2-3, to=2-2]
	\arrow["Tr", from=2-3, to=2-2]
\end{tikzcd}\]
    
    \noindent We keep the notations $f,g_1,\ldots,g_n$ for the corresponding elements in $E$. Then the element $f\{g_1,\ldots,g_n\}_{r_1,\ldots,r_n}$ is given by the image of $x=\mathcal{O}F_{r}(f,\underbrace{g_{1},\ldots,g_{1}}_{r_{1}},\ldots,\underbrace{g_{n},\ldots,g_{n}}_{r_{n}})\in\Gamma(\mathcal{P}re\mathcal{L}ie,E)$ under the composite
\[\begin{tikzcd}
	{\Gamma(\mathcal{P}re\mathcal{L}ie,E)} && {\Gamma(\mathcal{P}re\mathcal{L}ie,L)} & {\Gamma(\mathcal{B}race,L)} & {\mathcal{S}(\mathcal{B}race,L)} & L.
	\arrow["{\Gamma(\mathcal{P}re\mathcal{L}ie,\psi)}", from=1-1, to=1-3]
	\arrow["{\Gamma(i,L)}", from=1-3, to=1-4]
	\arrow["\simeq", from=1-5, to=1-4]
	\arrow["Tr"', from=1-5, to=1-4]
	\arrow["l", from=1-5, to=1-6]
\end{tikzcd}\]
    
    \noindent Our goal is to compute the image of $x$ under the bottom composite of the diagram, which is
\[\begin{tikzcd}
	{\Gamma(\mathcal{P}re\mathcal{L}ie,E)} & {\Gamma(\mathcal{B}race,E)} & {\mathcal{S}(\mathcal{B}race,E)} && {\mathcal{S}(\mathcal{B}race,L).}
	\arrow["{\Gamma(i,E)}", from=1-1, to=1-2]
	\arrow["\simeq", from=1-3, to=1-2]
	\arrow["Tr"', from=1-3, to=1-2]
	\arrow["{\mathcal{S}(\mathcal{B}race,\psi)}", from=1-3, to=1-5]
\end{tikzcd}\]
    
    \noindent We set $(\overline{g}_{1},\ldots,\overline{g}_{r+1})=(f,\underbrace{g_{1},\ldots,g_{1}}_{r_{1}},\ldots,\underbrace{g_{n},\ldots,g_{n}}_{r_{n}})$ (note that we have added $f$ here so that these $\overline{g}_{i}$'s are different from the $\overline{g}_{i}$'s of the theorem). We then precisely have:
    

    $$x=\sum_{\sigma\in\Sigma_{r+1}/\prod_{i}\Sigma_{r_{i}}}\pm(\sigma\cdot F_{r})\otimes\overline{g}_{\sigma^{-1}(1)}\otimes\cdots\otimes\overline{g}_{\sigma^{-1}(r+1)}$$

    \noindent by definition of the orbit map (see Lemma \ref{technique2}). Now, because $\Sigma_{r+1}/\prod_{i}\Sigma_{r_{i}}$ is in bijection with $Sh(1,r_{1},\ldots,r_{n})$, we can write $x$ as
    $$x=\sum_{\sigma\in Sh(1,r_{1},\ldots,r_{n})}\pm(\sigma\cdot F_{r})\otimes \overline{g}_{\sigma^{-1}(1)}\otimes\cdots\otimes\overline{g}_{\sigma^{-1}(r+1)}.$$

    \noindent We now embed $\mathcal{P}re\mathcal{L}ie$ into $\mathcal{B}race$. The tree $F_{r}$ can be seen in $\mathcal{B}race$ as $\sum_{\substack{s\in\Sigma_{r+1}\\ s(1)=1}}s\cdot\overline{F_{r}}$ where $\overline{F_{r}}$ is the planar tree
    \begin{equation*}
    \overline{F_{r}}=
    \begin{tikzpicture}[baseline={([yshift=-.5ex]current bounding box.center)},scale=0.6]
    \node[draw,circle,scale=0.6] (i) at (0,0) {$1$};
    \node[draw,circle,scale=0.6] (1) at (-1,1) {$2$};
    \node[draw,circle,scale=0.6] (2) at (0,1) {$3$};
    \node[draw,circle,scale=0.6] (3) at (2,1) {$r+1$};
    \node (a) at (0.9,1) {$\cdots$};
    \draw (2) -- (i);
    \draw (i) -- (1);
    \draw (i) -- (3);
    \end{tikzpicture}\ \
\end{equation*}

\noindent We then obtain, in $\Gamma(\mathcal{B}race,E)$,
$$x=\sum_{\sigma\in Sh(1,r_{1},\ldots,r_{n})}\sum_{\substack{s\in\Sigma_{r+1}\\ s(1)=1}}\pm(\sigma s\cdot\overline{F_{r}})\otimes \overline{g}_{\sigma^{-1}(1)}\otimes\cdots\otimes\overline{g}_{\sigma^{-1}(r+1)}.$$

\noindent We now need to compute $y=Tr^{-1}(x)\in\mathcal{S}(\mathcal{B}race,E)$. We claim that
$$y=\sum_{\substack{\omega\in Sh(1,r_{1},\ldots,r_{n})\\\omega(1)=1}}\pm\overline{F_{r}}(\overline{g}_{\omega^{-1}(1)},\ldots,\overline{g}_{\omega^{-1}(r+1)}).$$

\noindent We compute
$$Tr(y)=\sum_{\substack{\omega\in Sh(1,r_{1},\ldots,r_{n})\\\omega(1)=1}}\sum_{\tau\in\Sigma_{r+1}}\pm(\tau\cdot\overline{F_{r}})\otimes \overline{g}_{\omega^{-1}\tau^{-1}(1)}\otimes\cdots\otimes\overline{g}_{\omega^{-1}\tau^{-1}(r+1)}.$$

\noindent The fact that $Tr(y)=x$ comes from the existence of a bijective correspondance 
$$\varphi:Sh(1,r_1,\ldots,r_n)\times\{s\in\Sigma_{r+1}\ |\ s(1)=1\}\longrightarrow\{\omega\in Sh(1,r_1,\ldots,r_n)\ |\ \omega(1)=1\}\times\Sigma_{r+1}.$$

\noindent Indeed, let $\sigma\in Sh(1,r_1,\ldots,r_n)$ and $s\in\Sigma_{r+1}$ be such that $s(1)=1$. We set $\tau=\sigma s$, and decompose $\tau^{-1}\sigma=s^{-1}=\omega\mu$ as a product of $\omega\in Sh(1,r_1,\ldots,r_n)$ with $\mu\in\Sigma_1\times\Sigma_{r_1}\times\cdots\times\Sigma_{r_n}$. Since $s(1)=1$, we have that $\omega(1)=1$. We then set $\varphi(\sigma,s):=(\omega,\tau)$.  Since the couple $(\omega,\mu)$ uniquely depends on $s$, we have a well defined injective map $\varphi$ between two sets with the same cardinal. The map $\varphi$ is then a bijection.\\

\noindent Since we have $\overline{g}_{\mu\sigma^{-1}(i)}=\overline{g}_{\sigma^{-1}(i)}$ for every $i$, we obtain that $Tr(y)=x$, which proves the theorem.
\end{proof}

\begin{cor}\label{preliebrace}
    Let $\mathcal{P}$ be a non symmetric dg operad with $\mathcal{P}(0)=0$. We denote by $1\in\mathcal{P}(1)$ the unit element of $\mathcal{P}$, by $p(q_1,\ldots,q_n)=p\otimes q_1\otimes\cdots\otimes q_n\in\mathcal{P}\circ\mathcal{P}$ and by $\gamma:\mathcal{P}\circ\mathcal{P}\longrightarrow\mathcal{P}$ the operadic composition of $\mathcal{P}$. Then the dg module $\bigoplus_{r\geq 1}\mathcal{P}(r)$ admits a structure of a $\Gamma(\mathcal{P}re\mathcal{L}ie,-)$-algebra induced by the following brace algebra structure:
    $$p\langle q_1,\ldots,q_n\rangle=\sum_{1\leq i_1<\cdots<i_n\leq r}  \gamma(p(1,\ldots,\underset{i_1}{q_1},\ldots,\underset{i_n}{q_n},\ldots,1)).$$

    \noindent We also set $p\langle q_1,\ldots,q_n\rangle=0$ if the operadic composition is not possible.
\end{cor}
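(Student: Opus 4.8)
The plan is to reduce the statement to Theorem \ref{braces}: once we know that the formula of the statement endows $B:=\bigoplus_{r\geq 1}\mathcal{P}(r)$ with a structure of differential graded brace algebra, the $\Gamma(\mathcal{P}re\mathcal{L}ie,-)$-algebra structure, together with its weighted braces, is produced automatically by that theorem. So the work is to verify the three axioms of a dg brace algebra for the operations $p\langle q_1,\ldots,q_n\rangle=\sum_{1\leq i_1<\cdots<i_n\leq r}\gamma(p(1,\ldots,\underset{i_1}{q_1},\ldots,\underset{i_n}{q_n},\ldots,1))$, with the convention (automatically consistent with ``empty sum $=0$'') that this vanishes when $n$ exceeds the arity $r$ of $p$. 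It will be convenient to recall throughout that $\mathcal{P}$ is \emph{non-symmetric}, so that $\mathcal{P}\circ\mathcal{P}$ involves no coinvariants, all insertions are recorded by strictly increasing sequences of positions, and the only signs in play are Koszul signs from permuting homogeneous elements.

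First I would dispose of the two easy axioms. The sum defining $p\langle\rangle$ has a single term $\gamma(p(1,\ldots,1))=p$ by the unit axiom of $\mathcal{P}$, so $f\langle\rangle=f$. For compatibility with the differential, I would use that in a dg operad the unit $1$ is a cycle, that the differential of $\mathcal{P}\circ\mathcal{P}$ is the total differential of a tensor product, and that $\gamma$ is a chain map; differentiating a term $\gamma(p(1,\ldots,q_k,\ldots,1))$ then only hits $p$ and the $q_k$'s (the copies of $1$ being killed), and resumming over the positions yields $d(f\langle g_1,\ldots,g_n\rangle)=d(f)\langle g_1,\ldots,g_n\rangle+\sum_{k=1}^n\pm f\langle g_1,\ldots,d(g_k),\ldots,g_n\rangle$, with the Koszul sign from $d$ crossing $f,g_1,\ldots,g_{k-1}$.

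The main obstacle is the brace relation $f\langle g_1,\ldots,g_n\rangle\langle h_1,\ldots,h_r\rangle=\sum\pm f\langle H_1,g_1\langle H_2\rangle,\ldots,H_{2n-1},g_n\langle H_{2n}\rangle,H_{2n+1}\rangle$. I would unfold the left-hand side as a double sum: over a strictly increasing choice of positions for the $g_i$'s in $f$, and then over a strictly increasing choice of positions for the $h_j$'s in the composite $\gamma(f(\ldots,g_1,\ldots,g_n,\ldots))$. In that composite the inputs split into $n$ consecutive intervals $I_1<\cdots<I_n$ (the inputs coming from the $g_k$'s) interleaved with the ``outer'' inputs coming from $f$. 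An inserted $h_j$ either lands in an outer input, and then, depending on which gap between consecutive $g$'s it lies in, contributes to one of the odd blocks $H_1,H_3,\ldots,H_{2n+1}$; or it lands in $I_k$, in which case it is inserted into a slot originating from $g_k$ and contributes to the even block $H_{2k}$. Since the $I_k$'s are consecutive and the positions increasing, every $H_j$ is automatically a consecutive block and $H_1\sqcup\cdots\sqcup H_{2n+1}=\{h_1,\ldots,h_r\}$. The sequential-composition associativity axiom of $\mathcal{P}$ (i.e.\ $\gamma\circ(\mathrm{id}\otimes\gamma)=\gamma\circ(\gamma\otimes\mathrm{id})$ up to reindexing) then rewrites the nested composition attached to a fixed choice of positions exactly as the term $\pm f\langle H_1,g_1\langle H_2\rangle,\ldots,g_n\langle H_{2n}\rangle,H_{2n+1}\rangle$ of the right-hand side, and conversely every consecutive decomposition of $\{h_1,\ldots,h_r\}$ with a compatible choice of positions arises exactly once; so the two sides match term by term. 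I expect the genuinely delicate point to be the sign check: one must verify that the Koszul sign attached to a given left-hand term, coming from reordering $p$, the $g_k$'s and the $h_j$'s into the order imposed by the nested operadic composition, coincides with the sign prescribed on the right, namely the one from permuting each $g_k$ past exactly the $h_j$'s absorbed into blocks of index $>2k$. Once all three axioms are in place, Theorem \ref{braces} applies and equips $B$ with a $\Gamma(\mathcal{P}re\mathcal{L}ie,-)$-algebra structure, whose underlying pre-Lie product $p\star q=p\langle q\rangle=\sum_i\gamma(p(1,\ldots,\underset{i}{q},\ldots,1))$ is the familiar pre-Lie structure carried by a non-symmetric operad.
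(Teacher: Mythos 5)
Your proposal is correct and follows essentially the same route as the paper: establish that the given operations make $\bigoplus_{r\geq 1}\mathcal{P}(r)$ a dg brace algebra, then invoke Theorem \ref{braces}. The only difference is that the paper simply cites the literature (Gerstenhaber--Voronov) for the brace algebra structure, whereas you sketch the verification of the brace axioms directly via associativity of the operadic composition --- a sound (and classical) argument, with the sign bookkeeping being the only detail left implicit.
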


\begin{proof}
    We refer to \cite{gerstenhaber2} for the brace algebra structure of $\bigoplus_{r\geq 1}\mathcal{P}(r)$. It is endowed with a $\Gamma(\mathcal{P}re\mathcal{L}ie,-)$-algebra structure by Theorem \ref{braces}.
\end{proof}

In the symmetric context, we can recover an analogue of this corollary for $\bigoplus_{r\geq 1}\mathcal{P}(r)^{\Sigma_r}$. However, the operations $-\langle-,\ldots,-\rangle$ do not preserve $\bigoplus_{r\geq 1}\mathcal{P}(r)^{\Sigma_r}$. We thus need to force the symmetry, and then to sum on every possible positions of $q_1,\ldots,q_n$ in order to retrieve a $\Gamma(\mathcal{P}re\mathcal{L}ie,-)$-algebra structure.

\begin{prop}\label{symopealg}
    Suppose that $\mathbb{K}$ is a field and let $\mathcal{P}$ be a symmetric dg operad such that $\mathcal{P}(0)=0$. Then $\mathcal{L}(\mathcal{P})=\bigoplus_{r\geq 1}\mathcal{P}(r)^{\Sigma_r}$ is endowed with a $\Gamma(\mathcal{P}re\mathcal{L}ie,-)$-algebra structure defined by
    $$p\{q_1,\ldots,q_n\}_{r_1,\ldots,r_n}=\sum_{\sigma\in Sh(r_1,\ldots,r_n)}\sum_{\substack{1\leq i_1<\cdots<i_r\leq u\\ \omega\in Sh_\ast(1,...,\underset{i_1}{\overline{s}_{\sigma^{-1}(1)}},\ldots,\underset{i_r}{\overline{s}_{\sigma^{-1}(r)}},\ldots,1)}}\pm \omega\cdot\gamma(p(1,\ldots,\underset{i_1}{\overline{q}_{\sigma^{-1}(1)}},\ldots,\underset{i_r}{\overline{q}_{\sigma^{-1}(r)}},\ldots,1)),$$

    \noindent for elements of homogeneous arity $p\in\mathcal{P}(m)^{\Sigma_m}, q_1\in\mathcal{P}(s_1)^{\Sigma_{s_1}},\ldots,q_n\in\mathcal{P}(s_n)^{\Sigma_{s_n}}$ and where we have set $r=\sum_i r_i$, $(\overline{q}_1,\ldots,\overline{q}_r)=(\underbrace{q_1,\ldots,q_1}_{r_1},\ldots,\underbrace{q_n,\ldots,q_n}_{r_n})$ and $(\overline{s}_1,\ldots,\overline{s}_r)=(\underbrace{s_1,...,s_1}_{r_1},\ldots,\underbrace{s_n,\ldots,s_n}_{r_n})$. The sign is induced by the commutation of $\overline{q}_1,\ldots,\overline{q}_r$ to $\overline{q}_{\sigma^{-1}(r)},\ldots,\overline{q}_{\sigma^{-1}(1)}$. We also set $p\{q_1,\ldots,q_n\}_{r_1,\ldots,r_n}=0$ if $u<r_1+\cdots+r_n$. The weighted brace operations are then extended to the sum $\bigoplus_{r\geq 1}\mathcal{P}(r)^{\Sigma_r}$ by using Formula $(v)$ of Theorem \ref{Cesaro}.
\end{prop}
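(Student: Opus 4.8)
The plan is to reduce the statement to the converse direction of Theorem \ref{prelie2}. Since $\mathbb{K}$ is a field it is in particular a noetherian integral domain and every $\mathcal{L}(\mathcal{P})^{k}$ is a free $\mathbb{K}$-module, so it suffices to check that the operations $-\{-,\ldots,-\}_{r_{1},\ldots,r_{n}}$ given by the displayed formula on homogeneous-arity inputs, and extended to arbitrary elements of $\bigoplus_{r\geq 1}\mathcal{P}(r)^{\Sigma_{r}}$ through identity $(v)$ of Theorem \ref{Cesaro}, are well defined, preserve the grading, satisfy all the identities $(i)$--$(vi)$ of Theorem \ref{Cesaro} with the Koszul signs, and are compatible with the differential in the sense of Theorem \ref{prelie2}; the converse part of Theorem \ref{prelie2} then equips $\mathcal{L}(\mathcal{P})$ with the asserted $\Gamma(\mathcal{P}re\mathcal{L}ie,-)$-algebra structure. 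Checking that the extension by $(v)$ is consistent is an easy but not entirely formal point, handled exactly as in the proofs of the converse parts of Theorems \ref{prelie1} and \ref{prelie2}.

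I would first check that the right-hand side of the formula actually lands in $\mathcal{L}(\mathcal{P})$, i.e. is $\Sigma$-invariant. The key point is that, for a fixed $\sigma\in Sh(r_{1},\ldots,r_{n})$, the inner double sum — over positions $1\leq i_{1}<\cdots<i_{r}\leq u$ and over pointed shuffles $\omega$ of type $(1,\ldots,\overline{s}_{\sigma^{-1}(1)},\ldots,\overline{s}_{\sigma^{-1}(r)},\ldots,1)$ — runs exactly once over a complete set of coset representatives modulo the Young subgroup stabilising one fixed insertion $\gamma(p(1,\ldots,\overline{q}_{\sigma^{-1}(1)},\ldots,\overline{q}_{\sigma^{-1}(r)},\ldots,1))$ inside $\mathcal{P}(u-r+\sum_{i}r_{i}s_{i})$; this is the standard description of operadic composition restricted to $\Sigma$-invariants via pointed shuffles, and it uses the invariance of $p$ and of the $q_{i}$'s. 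Hence the inner sum is the orbit sum of a single insertion, in particular $\Sigma$-invariant, and summing over $\sigma$ preserves invariance. The grading statement is immediate because $\gamma$ and the $\Sigma$-action are degree-preserving. It is worth observing that the displayed formula is exactly the weighted braces of the brace algebra $\bigoplus_{r\geq 1}\mathcal{P}(r)$ of Corollary \ref{preliebrace} — as computed by Theorem \ref{braces} — post-composed with this orbit map onto invariants; this is the conceptual reason the identities will survive.

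The heart of the proof, and the step I expect to be the main obstacle, is the verification of identity $(vi)$ (the composition formula). Identities $(i)$--$(v)$ are comparatively routine: $(i)$, with the stated sign convention, follows from unshuffling the blocks $\overline{q}_{i}$; $(ii)$ and $(iii)$ are formal; $(iv)$ comes from counting the $\binom{r_{i}+r_{i+1}}{r_{i}}$ ways of splitting a merged block into two sub-blocks; and $(v)$ is built into the definition. For $(vi)$ the plan is to expand both sides as sums over iterated operadic insertions of the $z_{j}$'s into the $q_{i}$'s and then into $p$, and to match them using associativity and $\Sigma$-equivariance of the operadic composition $\gamma$ together with the decomposition of (pointed) shuffles into nested (pointed) shuffles; as explained in the remark after Theorem \ref{Cesaro}, one first allows rational coefficients and then reduces to $\mathbb{Z}$ using the other identities. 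Concretely this is the symmetric refinement of the computation behind Theorem \ref{braces} combined with the non-symmetric case of Corollary \ref{preliebrace}, and the pointed-shuffle and sign bookkeeping is where all the care is required. Compatibility with the differential then follows from the Leibniz rule for the operadic differential of $\mathcal{P}$, exactly as in the corresponding part of the proof of Theorem \ref{prelie2}. Assembling these verifications and invoking the converse direction of Theorem \ref{prelie2} completes the argument, and the final assertion about extending the operations to all of $\bigoplus_{r\geq 1}\mathcal{P}(r)^{\Sigma_{r}}$ by $(v)$ is precisely the well-definedness already established.
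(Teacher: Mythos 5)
Your overall architecture matches the paper's: check that the formula lands in the invariants $\mathcal{L}(\mathcal{P})$, verify the weighted-brace identities and the Leibniz rule, and then invoke the converse direction of Theorem \ref{prelie2} (legitimate here since $\mathbb{K}$ is a field, so the $\mathcal{L}(\mathcal{P})^k$ are free). Your invariance argument via cosets of Young subgroups and pointed shuffles is essentially the computation the paper performs with the decomposition $\mu\omega=\widetilde{\omega}\cdot\nu(\tau_1,\ldots,\tau_r)$. Where you genuinely diverge is on identity $(vi)$, which you correctly identify as the crux. You propose a direct combinatorial matching of iterated insertions via associativity, equivariance and nested shuffle decompositions; the paper instead sidesteps this entirely by a base-change argument: it introduces the free operad $\mathcal{F}(M_{\mathbb{K}})$ on a symmetric sequence of abstract generators with trivial $\Sigma$-action, observes that $(vi)$ holds over $\mathbb{Q}$ (where the trace map is an isomorphism of monads, so one only has to recognize the underlying pre-Lie structure of \cite[$\mathsection$5.3.16]{loday} and check that the $-\{-,\ldots,-\}_{1,\ldots,1}$ agree with the symmetric braces), deduces it in $\mathcal{F}(M_{\mathbb{Z}})\hookrightarrow\mathcal{F}(M_{\mathbb{Q}})$, pushes it forward along $\mathbb{Z}\to\mathbb{K}$, and finally specializes along $\mathcal{F}(M_{\mathbb{K}})\to\mathcal{P}$. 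Your route buys a self-contained, explicit verification; the paper's buys a short proof that delegates all the shuffle and sign bookkeeping to the characteristic-zero case, at the price of setting up the free-operad machinery.

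One caution about your plan for $(vi)$: the phrase ``one first allows rational coefficients and then reduces to $\mathbb{Z}$'' cannot be executed inside $\mathcal{P}$ itself when $\mathrm{char}(\mathbb{K})=p>0$, since the coefficients $1/\prod_j r_j!$ need not exist there. To make that step honest you must perform it in a torsion-free model mapping onto your situation --- which is exactly the role of $\mathcal{F}(M_{\mathbb{Z}})\hookrightarrow\mathcal{F}(M_{\mathbb{Q}})$ in the paper. Alternatively, your direct term-by-term matching must be carried out on the integral form of $(vi)$ (the reduction described after Theorem \ref{Cesaro}), which requires matching orbit multiplicities exactly rather than dividing by factorials. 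Either fix is available, but as written this step is the one point of your proposal that would not compile in positive characteristic without modification. Your side remark identifying the formula with the braces of Corollary \ref{preliebrace} composed with a symmetrization should also be treated only as motivation: as the paper notes, the brace operations do not preserve the invariants, so the identification is not literal.
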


\begin{proof}
     We first prove that these operations preserve $\mathcal{L}(\mathcal{P})$. Let $p\in\mathcal{P}(m)^{\Sigma_m}, q_1\in\mathcal{P}(s_1)^{\Sigma_{s_1}},\ldots,q_n\in\mathcal{P}(s_n)^{\Sigma_{s_n}}$ and $r_1,\ldots,r_n\geq 0$. Notice that since we have
     $$p\{q_1,\ldots,q_n\}_{r_1,\ldots,r_n}=p\{q_1,\ldots,q_n,1\}_{r_1,\ldots,r_n,m-(r_1+\cdots+r_n)},$$ 
     
     \noindent we can suppose that $m=r_1+\cdots+r_n$. We then have
    $$p\{q_1,\ldots,q_n\}_{r_1,\ldots,r_n}=\sum_{\sigma\in Sh(r_1,\ldots,r_n)}\sum_{\omega\in Sh_\ast(\overline{s}_{\sigma^{-1}(1)},\ldots,\overline{s}_{\sigma^{-1}(r)})}\pm\omega\cdot\gamma(p(\overline{q}_{\sigma^{-1}(1)},\ldots,\overline{q}_{\sigma^{-1}(r)})).$$

    Let $\mu\in\Sigma_{s_1r_1+\cdots+s_nr_n}$. For a given $\sigma\in Sh(r_1,\ldots,r_n)$, we write $\mu\omega=\widetilde{\omega}\cdot\nu(\tau_1,\ldots,\tau_r)$ where $\nu\in\Sigma_{r},\tau_1\in\Sigma_{\overline{s}_{\sigma^{-1}(1)}},\ldots,\tau_r\in\Sigma_{\overline{s}_{\sigma^{-1}(r)}},\widetilde{\omega}\in Sh_\ast(\overline{s}_{(\nu\sigma)^{-1}(1)},\ldots,\overline{s}_{(\nu\sigma)^{-1}(r)})$. We also have set $\nu(\tau_1,\ldots,\tau_r)$ to be the composite of $\tau_1\oplus\cdots\oplus\tau_r$ with the corresponding blocks permutation given by $\nu\in\Sigma_r$. We obtain
    $$\mu\omega\cdot\gamma(p(\overline{q}_{\sigma^{-1}(1)},\ldots,\overline{q}_{\sigma^{-1}(r)}))=\widetilde{\omega}\cdot\gamma(p(\overline{q}_{(\nu\sigma)^{-1}(1)},\ldots,\overline{q}_{(\nu\sigma)^{-1}(r)})),$$

    \noindent as $p,q_1,\ldots,q_n$ are invariants. We now write $\nu\sigma=\widetilde{\sigma}\cdot(\widetilde{\tau_1}\oplus\cdots\oplus\widetilde{\tau_r})$ where $\widetilde{\sigma}\in Sh(r_1,\ldots,r_n),\widetilde{\tau_1}\in\Sigma_{r_1},\ldots,\widetilde{\tau_r}\in\Sigma_{r_n}$. We thus obtain
    $$\mu\omega\cdot\gamma(p(\overline{q}_{\sigma^{-1}(1)},\ldots,\overline{q}_{\sigma^{-1}(r)}))=\widetilde{\omega}\cdot\gamma(p(\overline{q}_{\widetilde{\sigma}^{-1}(1)},\ldots,\overline{q}_{\widetilde{\sigma}^{-1}(r)})).$$
    
    \noindent We thus have proved that
    $$\mu\cdot(p\{q_1,\ldots,q_n\}_{r_1,\ldots,r_n})=\sum_{\widetilde{\sigma}\in Sh(r_1,\ldots,r_n)}\sum_{\widetilde{\omega}\in Sh_\ast(\overline{s}_{\widetilde{\sigma}^{-1}(1)},\ldots,\overline{s}_{\widetilde{\sigma}^{-1}(r)})}\pm\widetilde{\omega}\cdot\gamma(p(\overline{q}_{\widetilde{\sigma}^{-1}(1)},\ldots,\overline{q}_{\widetilde{\sigma}^{-1}(r)}))=p\{q_1,\ldots,q_n\}_{r_1,\ldots,r_n}.$$

    \noindent The operations $-\{-,\ldots,-\}_{r_1,\ldots,r_n}$ then preserve $\mathcal{L}(\mathcal{P})$.\\
    
    We now prove formulas of Theorem \ref{prelie1}. We can immediately check that formulas $(i)-(v)$ are satisfied. The commutation with the differential is also satisfied since the operadic structure is compatible with the differential. It remains to prove formula $(vi)$ of Theorem \ref{prelie1}. We first note that the theorem holds if $\mathbb{K}=\mathbb{Q}$. Indeed, in that case, the trace map $Tr:\mathcal{S}(\mathcal{P}re\mathcal{L}ie,-)\longrightarrow\Gamma(\mathcal{P}re\mathcal{L}ie,-)$ induces an isomorphism of monads. We thus only need to prove that the $\Gamma(\mathcal{P}re\mathcal{L}ie,-)$-algebra structure is induced by a pre-Lie algebra structure. This presumed pre-Lie algebra structure is given by
    $$p\{q\}_1=\sum_{i=1}^m\sum_{\omega\in Sh_\ast(1,\ldots,\underset{i}{n},\ldots,1)}\omega\cdot(p\circ_i q),$$
    
    \noindent where $p\in\mathcal{P}(m)^{\Sigma_m}$ and $q\in\mathcal{P}(n)^{\Sigma_n}$. We then recover the pre-Lie algebra structure given in \cite[$\mathsection$5.3.16]{loday}. We now need to prove that the operations $-\{-,\ldots,-\}_{1,\ldots,1}$ coincide with the symmetric braces $-\{-,\ldots,-\}$ induced by the pre-Lie operation (see Definition \ref{prelie}). It is equivalent to prove the identity
    $$p\{q_1,\ldots,q_{n+1}\}_{1,\ldots,1}=p\{q_1,\ldots,q_n\}_{1,\ldots,1}\{q_{n+1}\}_1-\sum_{k=1}^n\pm p\{q_1,\ldots,q_k\{q_{n+1}\}_{1},\ldots,q_n\}_{1,\ldots,1}.$$

    \noindent This follows from the associativity of the operadic composition. More precisely, the term $p\{q_1,\ldots,q_n\}_{1,\ldots,1}\{q_{n+1}\}_1$ is composed of two types of operadic composition. Either $q_{n+1}$ is in the same level as $q_1,\ldots,q_n$, which will give exactly $p\{q_1,\ldots,q_{n+1}\}_{1,\ldots,1}$ by definition, or $q_{n+1}$ will be attached to one of $q_1,\ldots,q_n$. These last terms are removed in order to retrieve $p\{q_1,\ldots,q_{n+1}\}_{1,\ldots,1}$.\\

    We now prove the general case. Consider elements $p,q_1,\ldots,q_n,f_1,\ldots,f_m$ in $\bigoplus_{r\geq 1}\mathcal{P}(r)^{\Sigma_r}$ which are homogeneous in degrees and in arities, and $r_1,\ldots,r_n,s_1,\ldots,s_m\geq 0$. We need to compute $p\{q_1,\ldots,q_n\}_{r_1,\ldots,r_n}\{f_1,\ldots,f_m\}_ {s_1,\ldots,s_m}$ and to find the right hand-side of Theorem \ref{prelie1}. We consider the symmetric sequence $M_\mathbb{K}$, defined over any ring $\mathbb{K}$, spanned by abstract variables $P,Q_1,\ldots,Q_n,F_1,\ldots,F_m$ of the same arities and degrees as $p,q_1,\ldots,q_n,f_1,\ldots,f_m$ and endowed with a trivial action of the symmetric groups. We have an obvious morphism of symmetric sequences $M_\mathbb{K}\longrightarrow\mathcal{P}$ which sends $P$ to $p$, the $Q_i$'s to the $q_i$'s and the $F_j$'s to the $f_j$'s. We thus have a unique morphism of operads $\mathcal{F}(M_\mathbb{K})\longrightarrow\mathcal{P}$ which extends the morphism $M_\mathbb{K}\longrightarrow\mathcal{P}$, where $\mathcal{F}(M_\mathbb{K})$ is the free operad generated by the symmetric sequence $M_\mathbb{K}$. Because the presumed $\Gamma(\mathcal{P}re\mathcal{L}ie,-)$-algebra structure is written in terms of the operadic composition, if formula $(vi)$ of Theorem \ref{prelie1} holds for $\mathcal{F}(M_\mathbb{K})$, then it holds also for $\mathcal{P}$.\\

    We prove first that the formula is satisfied for $\mathcal{F}(M_\mathbb{Z})$. Since the morphism of rings $\mathbb{Z}\hookrightarrow\mathbb{Q}$ induces an injective morphism of operads $\mathcal{F}(M_\mathbb{Z})\hookrightarrow\mathcal{F}(M_\mathbb{Q})$ and since the relation $(vi)$ of Theorem \ref{prelie1} is true in $\mathcal{F}(M_\mathbb{Q})$, then it is also satisfied in $\mathcal{F}(M_\mathbb{Z})$. By using the morphism $\mathbb{Z}\longrightarrow\mathbb{K}$ which gives rise to a morphism of operads $\mathcal{F}(M_\mathbb{Z})\longrightarrow\mathcal{F}(M_\mathbb{K})$, we find that it is also satisfied in $\mathcal{F}(M_\mathbb{K})$.\\

    We thus have weighted braces operation on $\bigoplus_{r\geq 1}\mathcal{P}(r)^{\Sigma_r}$. Since we work on a field $\mathbb{K}$, by Theorem \ref{prelie2}, it implies that this structure endows $\bigoplus_{r\geq 1}\mathcal{P}(r)^{\Sigma_r}$ with a structure of a $\Gamma(\mathcal{P}re\mathcal{L}ie,-)$-algebra.
    \end{proof}

\begin{cor}\label{prod}
    Suppose that $\mathbb{K}$ is a field and let $\mathcal{P}$ be a dg operad such that $\mathcal{P}(0)=0$ and $\mathcal{P}(1)=\mathbb{K}$. Then $\prod_{r\geq 2}\mathcal{P}(r)^{\Sigma_r}$ is a complete $\Gamma(\mathcal{P}re\mathcal{L}ie,-)$-algebra.
\end{cor}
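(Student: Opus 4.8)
\emph{Proof proposal.} The plan is to deduce the statement from Proposition~\ref{symopealg} by restricting the $\Gamma(\mathcal{P}re\mathcal{L}ie,-)$-algebra structure of $\bigoplus_{r\geq 1}\mathcal{P}(r)^{\Sigma_r}$ to the summands of arity at least $2$ and then completing with respect to the arity filtration.

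By Proposition~\ref{symopealg}, $\mathcal{L}(\mathcal{P})=\bigoplus_{r\geq 1}\mathcal{P}(r)^{\Sigma_r}$ is a $\Gamma(\mathcal{P}re\mathcal{L}ie,-)$-algebra with the weighted braces given there. The first step is an arity count: inspecting that formula — after using $p\{q_1,\ldots,q_n\}_{r_1,\ldots,r_n}=p\{q_1,\ldots,q_n,1\}_{r_1,\ldots,r_n,m-\sum_ir_i}$ to reduce to the case $m=\sum_i r_i$ — one finds that for homogeneous $p\in\mathcal{P}(m)^{\Sigma_m}$ and $q_i\in\mathcal{P}(s_i)^{\Sigma_{s_i}}$ the element $p\{q_1,\ldots,q_n\}_{r_1,\ldots,r_n}$ is homogeneous of arity $m+\sum_i(s_i-1)r_i$. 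Since $\mathcal{P}(1)=\mathbb{K}$ is spanned by the unit, which occurs in the formulas of Proposition~\ref{symopealg} only as internal padding and is not part of the algebra, every corolla sends inputs of arity $\geq 2$ to an output of arity $\geq m\geq 2$; as every element of $\Gamma(\mathcal{P}re\mathcal{L}ie,-)$ of a module is an iterated composite of corollas in a basis of homogeneous elements (proof of Theorem~\ref{prelie1}), this shows that $L_0:=\bigoplus_{r\geq 2}\mathcal{P}(r)^{\Sigma_r}$ is a $\Gamma(\mathcal{P}re\mathcal{L}ie,-)$-subalgebra of $\mathcal{L}(\mathcal{P})$.

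Next I would equip $L_0$ with the arity filtration $F_kL_0=\bigoplus_{r\geq k+1}\mathcal{P}(r)^{\Sigma_r}$, so that $F_1L_0=L_0$ and an arity-$r$ element has filtration degree $r-1$. Because the operadic differential preserves arities, each $F_kL_0$ is a sub-dg-module, and the arity count above — which it suffices to check on homogeneous components — yields exactly
\[ F_kL_0\{F_{k_1}L_0,\ldots,F_{k_n}L_0\}_{r_1,\ldots,r_n}\subset F_{k+k_1r_1+\cdots+k_nr_n}L_0, \]
so $L_0$ is a filtered $\Gamma(\mathcal{P}re\mathcal{L}ie,-)$-algebra in the sense of Definition~\ref{filteredgamma}. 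Its completion is
\[ \widehat{L_0}=\lim_{n\geq 1}L_0/F_nL_0=\lim_{n\geq 1}\Big(\bigoplus_{2\leq r\leq n}\mathcal{P}(r)^{\Sigma_r}\Big)=\prod_{r\geq 2}\mathcal{P}(r)^{\Sigma_r}, \]
and since $\mathbb{K}$ is a field, the remark following Definition~\ref{filteredgamma} guarantees that the weighted braces of $L_0$ extend to $\widehat{L_0}$ and turn it into a complete $\Gamma(\mathcal{P}re\mathcal{L}ie,-)$-algebra. This is the asserted structure.

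The step I expect to be the main obstacle is the arity computation: one must work carefully through the double sum of Proposition~\ref{symopealg}, tracking the pointed shuffle permutations $\omega$ and the padding by units, to confirm that the output lies in arity \emph{exactly} $m+\sum_i(s_i-1)r_i$ rather than merely in arity $\geq m$; this precise value is what makes the filtration degree additive under the weighted braces and hence gives the clean inclusion required by Definition~\ref{filteredgamma}. A subsidiary point, handled by the corolla description recalled above, is to make sure that restriction to arity $\geq 2$ produces a genuine subalgebra over the monad $\Gamma(\mathcal{P}re\mathcal{L}ie,-)$, not merely a graded subspace stable under the weighted brace operations.
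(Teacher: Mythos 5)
Your proposal is correct and follows essentially the same route as the paper: restrict to the arity $\geq 2$ part of $\bigoplus_{r\geq 1}\mathcal{P}(r)^{\Sigma_r}$, filter by arity, observe the weighted braces respect the filtration of Definition~\ref{filteredgamma}, and complete using the remark after that definition together with the field hypothesis. The paper states the arity bookkeeping without proof, so your explicit computation of the output arity $m+\sum_i(s_i-1)r_i$ simply fills in what the paper leaves implicit.
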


\begin{proof}
    Let $\mathcal{L}(\mathcal{P})=\bigoplus_{r\geq 2}\mathcal{P}(r)^{\Sigma_r}$. Note that $\mathcal{L}(\mathcal{P})$ is a sub $\Gamma(\mathcal{P}re\mathcal{L}ie,-)$-algebra of $\bigoplus_{r\geq 1}\mathcal{P}(r)^{\Sigma_r}$. We have a filtration on it given by $F_k\mathcal{L}(\mathcal{P})=\bigoplus_{r\geq k+1}\mathcal{P}(r)^{\Sigma_r}$ which is preserved by the weighted brace operations. The completion with respect to this filtration is exactly $\prod_{r\geq 2}\mathcal{P}(r)^{\Sigma_r}$. By the remark after Definition \ref{filteredgamma}, we have weighted brace operations on $\prod_{r\geq 2}\mathcal{P}(r)^{\Sigma_r}$. Since we work over a field, this endows $\prod_{r\geq 2}\mathcal{P}(r)^{\Sigma_r}$ with a structure of a $\Gamma(\mathcal{P}re\mathcal{L}ie,-)$-algebra by Theorem \ref{prelie2}.
\end{proof}

\subsection{The gauge group}\label{sec:22}

We can now define an analogue of the circular product given in \cite{dotsenko} using the weighted brace operations. Before doing so, we adopt the following notations. Let $L$ be a complete $\Gamma(\mathcal{P}re\mathcal{L}ie,-)$-algebra and $L_{+}=\mathbb{K}1\oplus L$. We extend the weighted braces $-\{-,\ldots,-\}_{r_1,\ldots,r_n}:L^{\times n+1}\longrightarrow L$ on $L_+\times L^{\times n}$ by setting 
$$1\{y_{1},\ldots,y_{n}\}_{r_1,\ldots,r_n}=\left\{\begin{array}{ll} y_{i} & \text{if}\ r_i=1\ \text{and}\ \forall k\neq i,r_k=0\ \\ 0 & \text{if}\ r_1+\cdots+r_n>1\end{array}\right..$$

\noindent for every $y_1,\ldots,y_n\in L$. We can check that all the formulas from Theorem \ref{Cesaro} are still satisfied if we take $x\in L_+$.

\begin{defi}
    Let $\alpha\in L_+$ and $\mu\in L^{0}$. We set
    $$\alpha\circledcirc (1+\mu)=\sum_{n=0}^{+\infty}\alpha\{\mu\}_{n}.$$
\end{defi}

Note that this quantity is well defined since $L$ is complete, and because $1\{y\}_n=0$ as soon as $n\geq 2$.\\

By applying this definition in the case $\mathbb{Q}\subset\mathbb{K}$ and using the weighted braces given by Remark \ref{rembraces}, we retrieve the usual circular product given in \cite{dotsenko}.

\begin{remarque}
    One can see that we have $1\circledcirc(1+\mu)=1+\mu=(1+\mu)\circledcirc 1$ so that $1$ is a unit element for $\circledcirc$. We thus have
    $$\forall \mu,\nu\in L^{0},(1+\mu)\circledcirc(1+\nu)=1+\nu+\sum_{n=0}^{+\infty}\mu\{\nu\}_{n},$$
    \noindent which shows that $\circledcirc$ preserves $1+L^0$.
\end{remarque}

\begin{lm}\label{ass}
The circular product $\circledcirc$ is associative, in the sense that for all $\alpha\in L_+$ and $\mu,\nu\in L^{0}$,
$$(\alpha\circledcirc(1+\mu))\circledcirc(1+\nu)=\alpha\circledcirc((1+\mu)\circledcirc(1+\nu)).$$
\end{lm}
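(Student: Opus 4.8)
The plan is to reduce the associativity of $\circledcirc$ to the composition formula $(vi)$ of Theorem~\ref{Cesaro}, exactly in the spirit of the characteristic-$0$ argument of Dotsenko--Shadrin--Vallette but keeping track of the weights rather than the $\frac{1}{n!}$ coefficients. First I would expand both sides directly from the definition. The left-hand side is
$$(\alpha\circledcirc(1+\mu))\circledcirc(1+\nu)=\left(\sum_{p\geq 0}\alpha\{\mu\}_p\right)\circledcirc(1+\nu)=\sum_{q\geq 0}\left(\sum_{p\geq 0}\alpha\{\mu\}_p\right)\{\nu\}_q,$$
and since the weighted brace $-\{\nu\}_q$ is additive in its first slot (this is the $r_i=0$ part of the general additivity, or directly formula $(v)$ applied on the outside), this equals $\sum_{p,q\geq 0}(\alpha\{\mu\}_p)\{\nu\}_q$. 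The right-hand side is $\alpha\circledcirc(1+\nu+\sum_{n\geq 0}\mu\{\nu\}_n)$; using the Remark after the definition of $\circledcirc$ to identify $(1+\mu)\circledcirc(1+\nu)=1+\bigl(\nu+\sum_{n\geq1}\mu\{\nu\}_n\bigr)$, this is $\sum_{m\geq 0}\alpha\{\,\nu+\sum_{n\geq 1}\mu\{\nu\}_n\,\}_m$ (the $n=0$ term of $\mu\{\nu\}_n$ is $\mu$, so one should be slightly careful and write the argument as $\mu+\nu+\sum_{n\geq1}\mu\{\nu\}_n$ minus $\mu$; in fact the cleanest route is to write $(1+\mu)\circledcirc(1+\nu)=1+\eta$ with $\eta=\nu+\sum_{n\geq1}\mu\{\nu\}_n$ and expand $\alpha\circledcirc(1+\eta)=\sum_m\alpha\{\eta\}_m$).

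Next I would expand $\alpha\{\eta\}_m$ using the multilinearity formula $(v)$ of Theorem~\ref{Cesaro} repeatedly (or its multi-term version): since $\eta$ is a sum of the terms $\nu$ and $\mu\{\nu\}_n$ for $n\geq 1$, $\alpha\{\eta\}_m$ decomposes as a sum over all ways of distributing the weight $m$ among these summands. A term in which $\nu$ receives weight $\beta$ and, for each multiset of values $n_1\leq\cdots\leq n_k$, the element $\mu\{\nu\}_{n_j}$ receives weight $t_j$, contributes (after using the symmetry formula $(i)$ and the repeated-argument formula $(iv)$ to collapse equal entries, exactly as in the reduction paragraph after Theorem~\ref{Cesaro}) a term of the shape $\alpha\{\mu\{\nu\}_{n_1},\ldots,\mu\{\nu\}_{n_1},\ldots,\nu,\ldots,\nu\}_{t_1,\ldots,1,\ldots,1,\beta}$ — and this is precisely the general term appearing on the right-hand side of the composition formula $(vi)$ applied to $\alpha\{\mu\}_p\{\nu\}_q$ with $p=t_1+\cdots+t_k$ and $q$ the total $\nu$-weight, namely $q=\beta+\sum_j t_j n_j$. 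So the bookkeeping amounts to checking that the two index sets — "ways of distributing $m$ over $\nu$ and the $\mu\{\nu\}_n$'s" on the RHS, and "pairs $(p,q)$ together with the internal data of $(vi)$" on the LHS — are in bijection and that the coefficients (which on the LHS come from $(vi)$'s $\frac{1}{\prod r_j!}$ and on the RHS from the $\frac{1}{n!}$-type collapses in formula $(v)$, here all trivial since weight-$1$ slots carry no factorial) match.

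The main obstacle I expect is purely combinatorial: organizing the double sum $\sum_{p,q}(\alpha\{\mu\}_p)\{\nu\}_q$ so that each application of $(vi)$ lines up, after reindexing, with one homogeneous piece of $\sum_m \alpha\{\eta\}_m$, and verifying that the signs produced by the Koszul rule (Remark~\ref{remsign}) are consistent on the two sides — but since $\mu,\nu\in L^0$ are of even degree $0$, all these signs are trivial, which removes the delicate part. I would therefore present the proof by: (1) expanding both sides as above; (2) fixing $m$ on the RHS and $m$ as the total number of copies of $\mu$ on the LHS, i.e. collecting the LHS terms with $p$ copies of $\mu$ into the coefficient of the appropriate RHS piece; (3) invoking formula $(vi)$ term-by-term to rewrite $(\alpha\{\mu\}_p)\{\nu\}_q$ and formula $(v)$ to expand $\alpha\{\eta\}_m$; (4) exhibiting the bijection between the two indexing sets and checking the coefficients agree, which is exactly the reduction-to-$\mathbb{Z}$ computation spelled out after Theorem~\ref{Cesaro}. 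Convergence of all infinite sums is guaranteed throughout by completeness of $L$ and the filtration condition of Definition~\ref{filteredgamma} (together with $1\{y\}_n=0$ for $n\geq 2$), so no analytic issue arises.
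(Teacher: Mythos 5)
Your proposal follows essentially the same route as the paper's proof: expand both sides from the definition, rewrite each $(\alpha\{\mu\}_p)\{\nu\}_q$ via formula $(vi)$ of Theorem \ref{Cesaro}, count the multiplicity $p!/(t_1!\cdots t_k!)$ of each partition to absorb the $\frac{1}{p!}$ and collapse repeated entries, and conclude with formula $(v)$. The one slip to fix is your ``cleanest route'': since $\mu\{\nu\}_0=\mu$, one has $(1+\mu)\circledcirc(1+\nu)=1+\eta$ with $\eta=\nu+\sum_{n\geq 0}\mu\{\nu\}_n$, and the $n=0$ summand $\mu$ must be kept in $\eta$ (it matches the entries $\alpha^{i}=0$ produced by formula $(vi)$ on the left-hand side), so your bijection as literally indexed by $n\geq 1$ misses those terms; with that correction the argument is exactly the paper's.
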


\begin{proof}
    Let $\alpha\in L_+$ and $\mu,\nu\in L^{0}$. We first have
\begin{center}
$\begin{array}{lll}
    (\alpha\circledcirc (1+\mu))\circledcirc (1+\nu) & = & \displaystyle\left(\sum_{n=0}^{+\infty}\alpha\{\mu\}_{n}\right)\circledcirc (1+\nu)  \\
    & = &\displaystyle \sum_{n,p=0}^{+\infty}\alpha\{\mu\}_{n}\{\nu\}_{p}.  
\end{array}$
\end{center}

On the other hand, we have
\begin{center}
$\begin{array}{lll}
    \alpha\circledcirc((1+\mu)\circledcirc (1+\nu)) & = & \displaystyle \alpha\circledcirc\left(1+\nu+\sum_{n=0}^{+\infty}\mu\{\nu\}_{n}\right)  \\
    & = &\displaystyle \sum_{p=0}^{+\infty}\alpha\left\{\nu+\sum_{n=0}^{+\infty}\mu\{\nu\}_{n}\right\}_{p}.
\end{array}$
\end{center}

We thus need to prove that
$$\sum_{n,p=0}^{+\infty}\alpha\{\mu\}_{n}\{\nu\}_{p}=\sum_{p=0}^{+\infty}\alpha\left\{\nu+\sum_{n=0}^{+\infty}\mu\{\nu\}_{n}\right\}_{p}.$$

To prove this identity, we use formula $(vi)$ of Theorem \ref{Cesaro}:

$$\alpha\{\mu\}_{n}\{\nu\}_{p}=\sum_{p=\beta+\sum_{i=1}^{n}\alpha^{i}}\frac{1}{n!}\alpha\{\mu\{\nu\}_{\alpha^{1}},\ldots,\mu\{\nu\}_{\alpha^{n}},\nu\}_{1,\ldots,1,\beta},$$

\noindent which gives

$$\sum_{n,p=0}^{+\infty}\alpha\{\mu\}_{n}\{\nu\}_{p}=\sum_{n=0}^{+\infty}\sum_{p=0}^{+\infty}\sum_{\beta=0}^{p}\sum_{p-\beta=\sum_{i=1}^{n}\alpha^{i}}\frac{1}{n!}\alpha\{\mu\{\nu\}_{\alpha^{1}},\ldots,\mu\{\nu\}_{\alpha^{n}},\nu\}_{1,\ldots,1,\beta}.$$

In this sum, because of the symmetry, some terms occur several times. For a given $p$ and $\beta$, we count the number of partitions of $p-\beta=\alpha^{1}+\cdots+\alpha^{n}$ of the particular form $r_{1}\widetilde{\alpha^{1}}+\cdots+r_{q}\widetilde{\alpha^{q}}$. We get $ n(\widetilde{\alpha^{1}},\ldots,\widetilde{\alpha^{n}})=\frac{n!}{r_{1}!\cdots r_{q}!}$ for this number. We then have
$$\frac{1}{r_{1}!\cdots r_{q}!}\alpha\{\mu\{\nu\}_{\widetilde{\alpha}^{1}},\ldots,\mu\{\nu\}_{\widetilde{\alpha}^{1}},\ldots,\mu\{\nu\}_{\widetilde{\alpha}^{q}},\ldots,\mu\{\nu\}_{\widetilde{\alpha}^{q}},\nu\}_{1,\ldots,1,\beta}=\alpha\{\mu\{\nu\}_{\widetilde{\alpha}^{1}},\ldots,\mu\{\nu\}_{\widetilde{\alpha}^{q}},\nu\}_{r_{1},\ldots,r_{q},\beta}.$$

We conclude by formula $(v)$ of Theorem \ref{Cesaro}.
\end{proof}

We now need to find an explicit inverse for a given element $1-\mu$ with $\mu\in L^{0}$.

\begin{defi}
    Let $t$ be a non-labeled tree with $n$ vertices and $\mu\in L^0$. We set
    $$\mathcal{O}t(\mu)=\gamma(\mathcal{O}t(\mu^{\otimes n})),$$

    \noindent for some choice of labeling of $t$.
\end{defi}

Note that because $\mathcal{O}$ is $\Sigma$-invariant, this quantity does not depend on the choice of a labeling for $t$. For example, let $t$ be the non-labeled tree
\begin{equation*}
    t=
    \begin{tikzpicture}[baseline={([yshift=-.5ex]current bounding box.center)},scale=0.6]
    \node[draw,circle,scale=0.6] (i) at (0,0) {};
    \node[draw,circle,scale=0.6] (1) at (-3,1) {};
    \node[draw,circle,scale=0.6] (1bis) at (-3.5,2) {};
    \node[draw,circle,scale=0.6] (1bbis) at (-2.5,2) {};
    \node[draw,circle,scale=0.6] (2) at (-1,1) {};
    \node[draw,circle,scale=0.6] (2bis) at (-1.5,2) {};
    \node[draw,circle,scale=0.6] (2bbis) at (-0.5,2) {};
    \node[draw,circle,scale=0.5] (3) at (1,1) {};
    \node[draw,circle,scale=0.5] (3bis) at (1.5,2) {};
    \node[draw,circle,scale=0.5] (3bbis) at (0.5,2) {};
    \node[draw,circle,scale=0.5] (3bbbis) at (1,2) {};
    \node[draw,circle,scale=0.5] (4) at (2.5,1) {};
    \draw (1) -- (1bis);
    \draw (1) -- (1bbis);
    \draw (2) -- (2bis);
    \draw (2) -- (2bbis);
    \draw (3) -- (3bis);
    \draw (3) -- (3bbis);
    \draw (3) -- (3bbbis);
    \draw (4) -- (i);
    \draw (2) -- (i);
    \draw (i) -- (1);
    \draw (i) -- (3);
    \end{tikzpicture}.\ \ 
\end{equation*}

\noindent Then
$$\mathcal{O}t(\mu)=\mu\{\mu\{\mu\}_{2},\mu\{\mu\}_{3},\mu\}_{2,1,1}.$$

\begin{lm}\label{inverse}
    For every $\mu\in L^{0}$, the element $1-\mu$ has an inverse in $1+L^0$ for the circular product $\circledcirc$ given by
    $$(1-\mu)^{\circledcirc -1}=1+\sum_{t\in rRT^{\ast}}\mathcal{O}t(\mu),$$

    \noindent where $rRT^{\ast}$ is the set of trees without any labeling and with at least one vertex.
\end{lm}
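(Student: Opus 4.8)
The plan is to check directly that $1+S$, with $S:=\sum_{t\in rRT^{\ast}}\mathcal{O}t(\mu)$, is a two-sided $\circledcirc$-inverse of $1-\mu$. First one must see that $S$ is well defined in the complete algebra $L$: since $\mu\in F_1L=L$ and the weighted braces preserve the filtration, one checks that $\mathcal{O}t(\mu)\in F_{|t|}L$, where $|t|$ denotes the number of vertices of $t$; as there are only finitely many trees with a bounded number of vertices, the series converges, exactly as for $\circledcirc$ itself, and moreover $S\in L^0$ because the weighted braces preserve the grading. Next, using that $1$ is a unit for $\circledcirc$ and that $\alpha\mapsto\alpha\circledcirc(1+\nu)$ is additive (the weighted braces being $\mathbb{K}$-linear in their first argument and extended $\mathbb{K}$-linearly to $L_+$), one computes
$$(1-\mu)\circledcirc(1+S)=\bigl(1\circledcirc(1+S)\bigr)-\bigl(\mu\circledcirc(1+S)\bigr)=(1+S)-\mu-\sum_{n\ge1}\mu\{S\}_n,$$
so that the statement reduces to the fixed-point identity
$$S=\mu+\sum_{n\ge1}\mu\{S\}_n.$$

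The heart of the matter is a structural identity describing $\mathcal{O}t$ in terms of grafting: if $t$ is a tree whose root carries, as subtrees, exactly $r_i$ copies of a tree $s_i$ for $i=1,\dots,m$, with $s_1,\dots,s_m$ pairwise distinct (and with the convention $\mathcal{O}\bullet(\mu)=\mu=\mu\{\}$ for the one-vertex tree $\bullet$), then
$$\mathcal{O}t(\mu)=\mu\{\mathcal{O}s_1(\mu),\dots,\mathcal{O}s_m(\mu)\}_{r_1,\dots,r_m}.$$
This follows from the description of the weighted braces as $\gamma\circ\mathcal{O}$ evaluated on the corollas $F_r$ (the Construction above) together with the compatibility of the orbit map with the monadic composition of $\Gamma(\mathcal{P}re\mathcal{L}ie,-)$ — i.e.\ with grafting a forest onto the leaves of a corolla — which is Cesaro's computation of this monad already used in the proof of Theorem \ref{prelie1}. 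Granting it, one expands the right-hand side of the fixed-point identity: iterating formula $(v)$ of Theorem \ref{Cesaro} (and using formulas $(i)$--$(ii)$ to collect terms and discard weight-zero entries) rewrites $\mu\{S\}_n=\mu\bigl\{\textstyle\sum_t\mathcal{O}t(\mu)\bigr\}_n$ as the sum of $\mu\{\mathcal{O}s_{i_1}(\mu),\dots,\mathcal{O}s_{i_k}(\mu)\}_{r_{i_1},\dots,r_{i_k}}$ over all finitely supported weightings of $rRT^{\ast}$ of total weight $n$, the level-by-level filtration making these manipulations legitimate in the complete algebra; the extra term $\mu=\mu\{\}$ accounts for the zero weighting. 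Summing over all $n\ge0$ we obtain $\sum_w\mu\{\dots\}_w$ over every finitely supported weighting $w$ of $rRT^{\ast}$. The assignment sending such a $w$ to the tree $t_w$ whose root carries the corresponding multiset of subtrees is a bijection onto $rRT^{\ast}$, and by the structural identity each summand equals $\mathcal{O}t_w(\mu)$; hence the right-hand side equals $\sum_{t\in rRT^{\ast}}\mathcal{O}t(\mu)=S$, which proves the fixed-point identity and therefore $(1-\mu)\circledcirc(1+S)=1$.

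It remains to get the inverse on the other side. Since $1+S=1-(-S)$ with $-S\in L^0$, what has just been proved provides a right $\circledcirc$-inverse $1+S'$ of $1+S$; then, using associativity (Lemma \ref{ass}) and the unit,
$$1-\mu=(1-\mu)\circledcirc\bigl((1+S)\circledcirc(1+S')\bigr)=\bigl((1-\mu)\circledcirc(1+S)\bigr)\circledcirc(1+S')=1+S',$$
so $(1+S)\circledcirc(1-\mu)=(1+S)\circledcirc(1+S')=1$ as well, and $1+S$ is the two-sided inverse of $1-\mu$. The main obstacle is the structural identity $\mathcal{O}t(\mu)=\mu\{\mathcal{O}s_i(\mu)\}_{r_i}$: everything else is a convergence check plus bookkeeping with formula $(v)$ and the tree bijection, but making precise how the orbit map interacts with operadic grafting — and verifying that in degree $0$ no coefficient or Koszul sign is lost when passing through $\Gamma(\mathcal{P}re\mathcal{L}ie,-)$ — is where the real work lies.
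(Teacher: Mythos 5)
Your proof of the right-inverse identity $(1-\mu)\circledcirc(1+S)=1$ follows the same route as the paper: decompose each tree into its root and the multiset of branches, and match the resulting nested weighted brace $\mu\{\mathcal{O}s_1(\mu),\dots,\mathcal{O}s_m(\mu)\}_{r_1,\dots,r_m}$ against the expansion of $\sum_n\mu\{S\}_n$ via formula $(v)$; your ``structural identity'' is exactly what the paper uses implicitly (it is visible in the worked example $\mathcal{O}t(\mu)=\mu\{\mu\{\mu\}_2,\mu\{\mu\}_3,\mu\}_{2,1,1}$ given right after the definition of $\mathcal{O}t(\mu)$), and since $\mu$ has degree $0$ the Koszul signs you worry about are all trivial. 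Where you genuinely diverge is the left-inverse half. The paper proves $(1+S)\circledcirc(1-\mu)=1$ directly, by a combinatorial argument on removing $k$ of the $m_t$ leaves of each tree $t$, which produces coefficients $(-1)^k\binom{m_t}{k}$ whose alternating sum over $1\le k\le m_t$ is $-1$ and cancels $\mathcal{O}t(\mu)$; the paper itself flags this as the harder direction. You instead invoke the standard monoid argument: having shown every element of $1+L^0$ admits a right $\circledcirc$-inverse, you apply this to $1+S=1-(-S)$ to get a right inverse $1+S'$, and associativity (Lemma \ref{ass}) plus the unit forces $1+S'=1-\mu$, whence the inverse is two-sided. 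This is correct and noticeably cleaner --- it replaces the leaf-removal combinatorics entirely --- at the cost of telling you nothing about the combinatorial structure of the cancellation; both the associativity lemma and the unit property are established before this point in the paper, so there is no circularity.
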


\begin{proof}
    We first see that this defines a right-inverse for $1-\mu$. Indeed, we first have that
    $$(1-\mu)\circledcirc\left(1+\sum_{t\in rRT^{*}}\mathcal{O}t(\mu)\right)=1+\sum_{t\in rRT^{*}}\mathcal{O}t(\mu)-\sum_{k=0}^{+\infty}\mu\left\{\sum_{t\in rRT^{*}}\mathcal{O}t(\mu)\right\}_{k}.$$

    \noindent Then, as every $t\in rRT^{\ast}$ can be uniquely described by its root and branches, we have that every term in the first sum at the right hand side can be uniquely described by an element from the second sum, and vice versa. Formula $(v)$ from Theorem \ref{Cesaro} thus give the result.\\

    We now need to prove that it is a left inverse, which is slightly more difficult. We compute
    $$\left(1+\sum_{t\in rRT^{\ast}}\mathcal{O}t(\mu)\right)\circledcirc (1-\mu)=1-\mu+\sum_{t\in rRT^{\ast}}\mathcal{O}t(\mu)+\sum_{k\geq 1}\left(\sum_{t\in rRT^{*}}\mathcal{O}t(\mu)\right)\{-\mu\}_{k}.$$

    We focus on one term $\mathcal{O}t(\mu)$ from the first sum, for some tree $t\in rRT^{\ast}$. Recall that a vertex of $t$ is called a \textit{leaf} if it is not the root of $t$ and if it is connected to one and only one other vertex in $t$. We denote by $m_t$ the number of leaves.\\
    
    If $m_t=0$, then $t$ is the trivial tree: $\mathcal{O}t(\mu)=\mu$. This term does not appear in the second sum (because $k\geq 1$) and vanishes with $-\mu$.\\
    
    If $m_t\neq 0$, the idea is to fix a number $1\leq k\leq m_t$, and to see which trees we can obtain if we remove $k$ leaves of $t$. These trees will occur in the second sum and give $(-1)^{k}\mathcal{O}t(\mu)$ by adding $k$ copies of $-\mu$.\\

    Let $X_{t}$ be the set of leaves of $t$. Let $X_{t,k}$ be the set of non ordered subsets of $X_{t}$ with $k$ elements. When we remove $k$ leaves, we need to take into account that we can obtain the same tree by removing a different non ordered set of $k$ leaves. For example, if we take the previous tree
    \begin{equation*}
    t=
    \begin{tikzpicture}[baseline={([yshift=-.5ex]current bounding box.center)},scale=0.6]
    \node[draw,circle,scale=0.6] (i) at (0,0) {};
    \node[draw,circle,scale=0.6] (1) at (-3,1) {};
    \node[draw,circle,scale=0.6] (1bis) at (-3.5,2) {};
    \node[draw,circle,scale=0.6] (1bbis) at (-2.5,2) {};
    \node[draw,circle,scale=0.6] (2) at (-1,1) {};
    \node[draw,circle,scale=0.6] (2bis) at (-1.5,2) {};
    \node[draw,circle,scale=0.6] (2bbis) at (-0.5,2) {};
    \node[draw,circle,scale=0.5] (3) at (1,1) {};
    \node[draw,circle,scale=0.5] (3bis) at (1.5,2) {};
    \node[draw,circle,scale=0.5] (3bbis) at (0.5,2) {};
    \node[draw,circle,scale=0.5] (3bbbis) at (1,2) {};
    \node[draw,circle,scale=0.5] (4) at (2.5,1) {};
    \draw (1) -- (1bis);
    \draw (1) -- (1bbis);
    \draw (2) -- (2bis);
    \draw (2) -- (2bbis);
    \draw (3) -- (3bis);
    \draw (3) -- (3bbis);
    \draw (3) -- (3bbbis);
    \draw (4) -- (i);
    \draw (2) -- (i);
    \draw (i) -- (1);
    \draw (i) -- (3);
    \end{tikzpicture}\ \
\end{equation*}

\noindent and if we look at the first branch, removing the vertex at the left gives the same tree as removing the vertex at the right.\\

Let $t_{k}^{1},\ldots,t_{k}^{p_{k}}$ be all the different trees that we can get from $t$ by removing $k$ leaves. We denote by $X_{t,k}^{t_{k}^{i}}$ the subset of $X_{t,k}$ formed by all the vertices that lead to $t_{k}^{i}$ when removing them from $t$. We then have a disjoint union $\displaystyle X_{t,k}=\bigsqcup_{i=1}^{p_{k}} X_{t,k}^{t_{k}^{i}}$.\\

Each terms $\mathcal{O}t_{k}^{i}(\mu)\{-\mu\}_{k}$ will then give, among other terms, $(-1)^{k}Card(X_{t,k}^{t_{k}^{i}}) \mathcal{O}t(\mu)$. When we take the sum over $i$, we obtain $\displaystyle (-1)^{k}Card(X_{t,k})\mathcal{O}t(\mu)=(-1)^{k}\binom{m_t}{k}\mathcal{O}t(\mu)$. By taking the sum over $k$, we therefore obtain $-\mathcal{O}t(\mu)$ which vanishes with $\mathcal{O}t(\mu)$ given by the first sum.
\end{proof}

From Lemma \ref{ass} and Lemma \ref{inverse}, we deduce assertion $(i)$ of Theorem \ref{theoremA}:

\begin{thm}
    The triple $G=(1+L^{0},\circledcirc,1)$ is a group called the gauge group of $L$.\\
\end{thm}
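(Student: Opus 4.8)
The plan is to verify the three group axioms for $G=(1+L^0,\circledcirc,1)$ by assembling the pieces already established in the excerpt. First I would recall that the previous remark shows $\circledcirc$ preserves $1+L^0$, so $\circledcirc$ is a well-defined binary operation on $1+L^0$; it remains to check associativity, existence of a unit, and existence of inverses.

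For associativity, Lemma~\ref{ass} gives exactly the needed identity $(\alpha\circledcirc(1+\mu))\circledcirc(1+\nu)=\alpha\circledcirc((1+\mu)\circledcirc(1+\nu))$ for all $\alpha\in L_+$ and $\mu,\nu\in L^0$; specializing $\alpha=1+\lambda$ with $\lambda\in L^0$ gives associativity on $1+L^0$. For the unit, the remark following the definition of $\circledcirc$ already records that $1\circledcirc(1+\mu)=1+\mu=(1+\mu)\circledcirc 1$, so $1$ is a two-sided identity. For inverses, Lemma~\ref{inverse} exhibits, for every $\mu\in L^0$, a two-sided inverse $(1-\mu)^{\circledcirc-1}\in 1+L^0$ of $1-\mu$; since every element of $1+L^0$ can be written as $1-\mu$ for some $\mu\in L^0$ (namely $\mu=-\nu$ if the element is $1+\nu$), every element of $1+L^0$ is invertible. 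Combining these three facts yields that $G$ is a group.

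The proof is therefore essentially a bookkeeping assembly: each axiom is a direct citation of a lemma proved above, with at most a trivial substitution of variables. There is no genuine obstacle remaining at this stage — the substantive work (the convergence of the infinite sums, associativity via formula $(vi)$, and the combinatorial identity producing the inverse) has already been carried out in Lemmas~\ref{ass} and~\ref{inverse}. If I were to flag anything, it would be to make explicit once more that all the sums involved converge because $L$ is complete and $1\{y\}_n=0$ for $n\geq 2$, which has already been noted, so the operation and the inverse formula are well-defined elements of $1+L^0$.

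\begin{proof}
    The remark following the definition of $\circledcirc$ shows that $\circledcirc$ restricts to a binary operation on $1+L^0$ and that $1$ is a two-sided unit for $\circledcirc$. Associativity follows from Lemma~\ref{ass} applied with $\alpha=1+\lambda$ for $\lambda\in L^0$. Finally, by Lemma~\ref{inverse}, for every $\mu\in L^0$ the element $1-\mu$ admits a two-sided inverse in $1+L^0$ for $\circledcirc$; since any element of $1+L^0$ has the form $1-\mu$ with $\mu\in L^0$, every element of $1+L^0$ is invertible. Hence $G=(1+L^0,\circledcirc,1)$ is a group.
\end{proof}
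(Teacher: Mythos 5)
Your proof is correct and matches the paper exactly: the paper deduces the theorem directly from Lemma~\ref{ass} (associativity), Lemma~\ref{inverse} (inverses), and the remark establishing that $1$ is a unit and that $\circledcirc$ preserves $1+L^0$. Your assembly of these facts, including the observation that every element of $1+L^0$ can be written as $1-\mu$, is precisely the intended argument.
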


\subsection{Maurer-Cartan elements and the Deligne groupoid}\label{sec:23}



We now aim to prove assertion $(ii)$ of Theorem \ref{theoremA}. We first make explicit the definition of the Maurer-Cartan set.

\begin{defi}
    Let $L$ be a dg $\Gamma(\mathcal{P}re\mathcal{L}ie,-)$-algebra. A given $\alpha\in L^{1}$ is a {\normalfont Maurer-Cartan element} if it satisfies the {\normalfont Maurer-Cartan equation}:
    $$d(\alpha)+\alpha\{\alpha\}_{1}=0.$$

    We let $\mathcal{MC}(L)$ to be the set of all Maurer-Cartan elements of $L$.
\end{defi}

\begin{remarque}
    In the case $\mathbb{Q}\subset\mathbb{K}$, we retrieve the classical definition:
    $$d(\alpha)+\frac{1}{2}[\alpha,\alpha]=0,$$

    \noindent written with the dg Lie algebra structure on $L$.
\end{remarque}

As in the case of characteristic zero, we expect the gauge group to act on the Maurer-Cartan set. Before seeing that, we define a new operation.

\begin{defi}
    Let $\alpha\in L_+,\beta\in L$ and $1+\mu\in G$. We set
    $$\alpha\circledcirc(1+\mu;\beta)=\sum_{n=0}^{+\infty}\alpha\{\mu,\beta\}_{n,1}.$$
\end{defi}

\begin{lm}
    We have the following identities:

    \begin{center}
$\begin{array}{lll}
(\alpha\circledcirc(1+\mu))\{\beta\}_{1} & = & \alpha\circledcirc(1+\mu;\beta+\mu\{\beta\}_{1}),\\
\alpha\{\beta\}_{1}\circledcirc(1+\mu) & = & \alpha\circledcirc(1+\mu;\beta\circledcirc(1+\mu)),\\
d(\alpha\circledcirc(1+\mu)) & = & d(\alpha)\circledcirc(1+\mu)+(-1)^{|\alpha|}\alpha\circledcirc(1+\mu;d(\mu)).
\end{array}$
\end{center}
\end{lm}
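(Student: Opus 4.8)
The plan is to verify each of the three identities by expanding both sides as infinite sums of weighted braces and matching terms using the combinatorial identities (i)--(vi) of Theorem \ref{Cesaro}, exactly in the spirit of the proof of Lemma \ref{ass}. All three sums converge because $L$ is complete and the filtration is preserved by the weighted braces, so there is no convergence issue beyond what has already been established; the work is purely in the rearrangement of terms.

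\textbf{First identity.} Starting from $\alpha\circledcirc(1+\mu)=\sum_{n\geq 0}\alpha\{\mu\}_n$, I would apply $-\{\beta\}_1$ and use the composition formula (vi) of Theorem \ref{Cesaro} to expand $\alpha\{\mu\}_n\{\beta\}_1$. Since the outer brace has a single entry $\beta$ with weight $1$, the sum in (vi) splits according to whether this $\beta$ is ``absorbed'' into one of the $n$ copies of $\mu$ (producing $\mu\{\beta\}_1$ in that slot, with one copy of $\mu$ converted) or attached at the root level alongside the remaining copies of $\mu$. Collecting terms and reindexing, this reconstitutes $\sum_{n\geq 0}\alpha\{\mu,\beta+\mu\{\beta\}_1\}_{n,1}$ — here one uses formula (v) to combine the two kinds of occurrence into a single entry $\beta+\mu\{\beta\}_1$ of weight $1$, and the symmetry (i)/(iv) to handle multiplicities exactly as in Lemma \ref{ass}. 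This is $\alpha\circledcirc(1+\mu;\beta+\mu\{\beta\}_1)$ by definition.

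\textbf{Second identity.} Here I would expand $\alpha\{\beta\}_1\circledcirc(1+\mu)=\sum_{n\geq 0}\alpha\{\beta\}_1\{\mu\}_n$ and again invoke (vi). Now the outer brace distributes $n$ copies of $\mu$ among the single slot $\beta$ (giving $\beta\{\mu\}_k$ for various $k$) and the root level (giving the remaining copies of $\mu$). Since $\beta$ occupies one slot of weight $1$, the result is $\sum_{n\geq 0}\sum_{k+j=n}\alpha\{\beta\{\mu\}_k,\mu\}_{1,j}$, and recognizing $\sum_k \beta\{\mu\}_k = \beta\circledcirc(1+\mu)$ this is $\alpha\circledcirc(1+\mu;\beta\circledcirc(1+\mu))$. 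Formula (v) is again what lets one package $\beta\{\mu\}_k$ as a single weight-$1$ entry.

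\textbf{Third identity.} This follows from the Leibniz-type formula for the differential on weighted braces in Theorem \ref{prelie2}. Applying $d$ to $\alpha\circledcirc(1+\mu)=\sum_{n\geq 0}\alpha\{\mu\}_n$ termwise gives $\sum_{n\geq 0}\bigl(d(\alpha)\{\mu\}_n + \sum_{k=1}^{n}(-1)^{|\alpha|}\alpha\{\mu,d(\mu)\}_{n-1,1}\bigr)$, where the sign $(-1)^{\varepsilon_k}$ from Theorem \ref{prelie2} reduces to $(-1)^{|\alpha|}$ because all the intermediate entries are copies of $\mu\in L^0$. The first part sums to $d(\alpha)\circledcirc(1+\mu)$; in the second part the inner sum over $k$ contributes $n$ identical terms, but after using symmetry (i) together with (iv) to recombine a $d(\mu)$-entry of weight $1$ adjacent to the $\mu$-block — note $d(\mu)\in L^1$ so if $\mathrm{char}\,\mathbb{K}\neq 2$ it has associated weight $1$ as per our convention — one checks the factor works out so that $\sum_{n\geq 1}\alpha\{\mu,d(\mu)\}_{n-1,1}$ reindexes to $\sum_{m\geq 0}\alpha\{\mu,d(\mu)\}_{m,1} = \alpha\circledcirc(1+\mu;d(\mu))$.

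\textbf{Main obstacle.} The delicate point in all three parts is the bookkeeping of multiplicities when the symmetry formula (i) collapses many terms of formula (vi) into one, together with the signs produced by the Koszul rule when $\mathrm{char}\,\mathbb{K}\neq 2$ — precisely the same subtlety already handled in Lemma \ref{ass} and in the passage from fractions to $\mathbb{Z}$ after Theorem \ref{Cesaro}. I would therefore carry out the computations by systematically reducing to that reference argument, treating the $\mathrm{char}\,\mathbb{K}=2$ case (no signs) first and then inserting the Koszul signs, which are forced by the convention that odd-degree entries have associated weight $1$.
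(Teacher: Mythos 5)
Your treatment of the first two identities is essentially the paper's own proof: expand the circular product, apply formula $(vi)$ of Theorem \ref{Cesaro} so that the single weight-one entry either stays at the root level or is absorbed into the $\mu$-block, and repackage with formula $(v)$; the multiplicity bookkeeping is the same as in Lemma \ref{ass}. No objection there.

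The third identity, however, contains a genuine error. Theorem \ref{prelie2} gives
$$d(x\{y_1,\ldots,y_n\}_{r_1,\ldots,r_n})=d(x)\{y_1,\ldots,y_n\}_{r_1,\ldots,r_n}+\sum_{k=1}^{n}(-1)^{\varepsilon_k}x\{y_1,\ldots,y_k,d(y_k),\ldots,y_n\}_{r_1,\ldots,r_k-1,1,\ldots,r_n},$$
where the sum is indexed by the $n$ \emph{distinct entries} $y_1,\ldots,y_n$, not by the copies counted by the weights $r_k$. For $\alpha\{\mu\}_n$ there is a single entry $\mu$ of weight $n$, so the Leibniz rule produces exactly one correction term, $(-1)^{|\alpha|}\alpha\{\mu,d(\mu)\}_{n-1,1}$, with no multiplicity. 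You instead write $\sum_{k=1}^{n}(-1)^{|\alpha|}\alpha\{\mu,d(\mu)\}_{n-1,1}$, i.e.\ $n$ identical terms, and then assert that ``the factor works out'' after using $(i)$ and $(iv)$. It does not: formula $(iv)$ merges repeated entries at the cost of a binomial coefficient and cannot cancel an overall factor of $n$; and if the factor $n$ were really present, the contribution would vanish whenever $\mathrm{char}\,\mathbb{K}$ divides $n$, breaking the identity. This is precisely the divided-powers point the whole construction is designed to handle: the weight drops by one with coefficient $1$, which replaces the classical cancellation $n\cdot\frac{1}{n!}=\frac{1}{(n-1)!}$. The correct statement follows immediately from Theorem \ref{prelie2} read as above, with no recombination step --- so the gap is easily repaired, but as written your argument for the third identity is not valid in positive characteristic.
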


\begin{proof}
    By applying formula $(vi)$ of Theorem \ref{Cesaro}, we find that
\begin{center}
    $\begin{array}{lll}
        (\alpha\circledcirc(1+\mu))\{\beta\}_{1} & = & \displaystyle\sum_{n=0}^{+\infty}\alpha\{\mu\}_{n}\{\beta\}_{1} \\
         & = & \displaystyle\sum_{n=0}^{+\infty}\alpha\{\mu,\beta\}_{n,1}+\sum_{n=1}^{+\infty}\alpha\{\mu,\mu\{\beta\}_{1}\}_{n-1,1}\\
         & = & \displaystyle\sum_{n=0}^{+\infty}\alpha\{\mu,\beta+\mu\{\beta\}_{1}\}_{n,1}\\
         & = & \alpha\circledcirc(1+\mu;\beta+\mu\{\beta\}_{1}),
    \end{array}$
\end{center}

\noindent as well as
\begin{center}
    $\begin{array}{lll}
        \alpha\{\beta\}_{1}\circledcirc(1+\mu) & = &\displaystyle\sum_{m=0}^{+\infty}\alpha\{\beta\}_{1}\{\mu\}_{m}\\
        & = & \displaystyle\sum_{p,q=0}^{+\infty}\alpha\{\beta\{\mu\}_{p},\mu\}_{1,q}\\
        & = & \alpha\circledcirc(1+\mu;\beta\circledcirc(1+\mu)).
    \end{array}$
\end{center}

Finally, by using the compatibility of $d$ with weighted braces, we obtain,
\begin{center}
    $\begin{array}{lll}
        d(\alpha\circledcirc(1+\mu)) & = & \displaystyle\sum_{n=0}^{+\infty}d(\alpha)\{\mu\}_{n}+(-1)^{|\alpha|}\sum_{n=1}^{+\infty}\alpha\{\mu,d(\mu)\}_{n-1,1}\\
         & = & \displaystyle d(\alpha)\circledcirc(1+\mu)+(-1)^{|\alpha|}\alpha\circledcirc(1+\mu;d(\mu)),\\
    \end{array}$\\
\end{center}

\noindent which concludes the proof of the lemma.
\end{proof}

We can now prove assertion $(ii)$ of Theorem \ref{theoremA}.

\begin{thm}
    The gauge group $G$ acts on the Maurer-Cartan set $\mathcal{MC}(L)$ by
    $$(1+\mu)\cdot\alpha=(\alpha+\mu\{\alpha\}_{1}-d(\mu))\circledcirc(1+\mu)^{\circledcirc-1}$$
    \noindent for all $(1+\mu)\in G$ and $\alpha\in\mathcal{MC}(L)$.
\end{thm}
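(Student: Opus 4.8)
The plan is to verify the two things an action requires: that $(1+\mu)\cdot\alpha$ is again a Maurer--Cartan element, and that this assignment is compatible with the group law, i.e. $(1+\nu)\cdot((1+\mu)\cdot\alpha) = ((1+\nu)\circledcirc(1+\mu))\cdot\alpha$ and $1\cdot\alpha=\alpha$. The identity element case is immediate from $1\circledcirc(1+\mu)=1+\mu$ and $(1+0)^{\circledcirc-1}=1$, together with $\mu=0$, so the substance is in the other two points.

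First I would show that $\beta:=(1+\mu)\cdot\alpha$ satisfies $d(\beta)+\beta\{\beta\}_1=0$. Write $\beta = \xi\circledcirc(1+\mu)^{\circledcirc-1}$ with $\xi=\alpha+\mu\{\alpha\}_1-d(\mu)$, and set $\eta=(1+\mu)^{\circledcirc-1}$. Using the three identities of the preceding lemma (expressing $(-\circledcirc\,\eta)\{-\}_1$, $d(-\circledcirc\,\eta)$ in terms of the auxiliary operation $-\circledcirc(\eta;-)$), one computes $d(\beta)+\beta\{\beta\}_1$ in terms of $\xi\circledcirc(\eta;-)$. The key observation, mirroring the characteristic-zero computation of Dotsenko--Shadrin--Vallette, is that the Maurer--Cartan equation for $\alpha$ is equivalent to the statement that $d+\alpha\{-\}_1$ has square zero (as an operator twisted into $L_+$), and the gauge transformation conjugates this twisted differential by $1+\mu$; concretely I would verify that
$$d(\xi) + \xi\{\alpha\}_1 + (\text{terms involving }\mu) = \bigl(d(\alpha)+\alpha\{\alpha\}_1\bigr)\circledcirc(\text{something}) = 0,$$
then transport this through $\circledcirc\,\eta$ using associativity of $\circledcirc$ (Lemma \ref{ass}) and the lemma's first identity to land on $d(\beta)+\beta\{\beta\}_1=0$. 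The bookkeeping of Koszul signs from permuting the odd element $\alpha$ (degree $1$) past $d$ and past $\mu$ (degree $0$) is routine but must be tracked.

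For compatibility with the group law, I would expand $(1+\nu)\cdot\beta = (\beta+\nu\{\beta\}_1-d(\nu))\circledcirc(1+\nu)^{\circledcirc-1}$, substitute $\beta=\xi\circledcirc\eta$, and use the lemma's identities (second identity for $\nu\{\beta\}_1 = \nu\{\xi\circledcirc\eta\}_1$, third for $d(\xi\circledcirc\eta)$) to rewrite the inner bracket as $\bigl(\xi + \mu\{\xi\}_1 - d(\mu)\bigr)$-type expression precomposed with $\eta$ — wait, rather, to collapse everything onto $\bigl(\alpha + (\mu\circledcirc\cdots)\{\alpha\}_1 - d(\mu\circledcirc\cdots)\bigr)$ composed with a single circular factor. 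The target side $((1+\nu)\circledcirc(1+\mu))\cdot\alpha$ involves $(1+\nu)\circledcirc(1+\mu) = 1+\mu+\sum_n\nu\{\mu\}_n$ and its circular inverse, which by Lemma \ref{inverse} and the right-inverse computation equals $(1+\mu)^{\circledcirc-1}\circledcirc(1+\nu)^{\circledcirc-1}$ (anti-homomorphism property of inversion in the group $G$). Then associativity of $\circledcirc$ does the rest. I expect the main obstacle to be precisely this step: massaging the nested circular products and auxiliary $\circledcirc(-;-)$ operations so that both sides are manifestly equal, since one must repeatedly invoke formula $(vi)$ of Theorem \ref{Cesaro} in the guise of the preceding lemma and keep the infinite sums under control using completeness. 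Once the algebra of $\circledcirc$, $\circledcirc(-;-)$, $d$ and $\{-\}_1$ is organized as in the lemma, the verification is a finite chain of rewrites with no genuinely new input.
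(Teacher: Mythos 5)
Your overall plan is the same as the paper's: check that $\beta=(1+\mu)\cdot\alpha$ is again Maurer--Cartan by rewriting the defining identity as $d(\mu)=\alpha+\mu\{\alpha\}_1-\beta\circledcirc(1+\mu)$, applying $d$, and pushing everything through the three identities of the preceding lemma; then verify compatibility with $\circledcirc$ by setting $1+\lambda=(1+\nu)\circledcirc(1+\mu)$ and showing $\alpha+\lambda\{\alpha\}_1-\gamma\circledcirc(1+\lambda)=d(\lambda)$. That skeleton is correct and is exactly what the paper does.

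There is, however, one concrete gap: you dismiss everything beyond the rewriting as ``routine Koszul sign bookkeeping,'' but the computation unavoidably produces the cross-term $\mu\{\alpha,\alpha\}_{1,1}$ (it comes out of $\mu\{\alpha\}_1\{\alpha\}_1=\mu\{\alpha\{\alpha\}_1\}_1+\mu\{\alpha,\alpha\}_{1,1}$ via relation $(vi)$), and its vanishing is the one genuinely non-routine point in positive characteristic. The characteristic-zero intuition you invoke (conjugating the square-zero twisted differential) hides this term inside $\tfrac12[\alpha,\alpha]$-type cancellations. Over a general ring $\mathbb{K}$, graded antisymmetry of the weighted brace only gives $2\,\mu\{\alpha,\alpha\}_{1,1}=0$, which does not suffice in characteristic $2$ or in the presence of $2$-torsion; and the relation $x\{y,y\}_{1,1}=\binom{2}{1}x\{y\}_2$ does not apply to the odd-degree element $\alpha$, whose weights are constrained to be $1$. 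The paper resolves this by computing the orbit element $\mathcal{O}F_2(e,e_1,e_2)$ explicitly (six terms) and observing that under the substitution $e_1,e_2\mapsto\alpha$ they cancel in pairs \emph{before} any division or torsion argument is needed. Your proposal would stall at exactly this point unless you supply that (or an equivalent) argument; the rest of your outline, including the use of $((1+\nu)\circledcirc(1+\mu))^{\circledcirc -1}=(1+\mu)^{\circledcirc -1}\circledcirc(1+\nu)^{\circledcirc -1}$ and associativity, goes through as you describe.
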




\begin{proof}
    We first need to prove that $\beta=(1+\mu)\cdot\alpha$ is indeed a Maurer-Cartan element. For this, we first remark that applying $d$ on each side of the equality $d(\mu)=\alpha+\mu\{\alpha\}_{1}-\beta\circledcirc(1+\mu)$, and by using that $d(\alpha)=-\alpha\{\alpha\}_{1}$ and the previous lemma, we have
$$d(\beta)\circledcirc(1+\mu)=-\alpha\{\alpha\}_{1}-\mu\{\alpha\{\alpha\}_{1}\}_{1}+d(\mu)\{\alpha\}_{1}+\beta\circledcirc (1+\mu;d(\mu)).$$

Moreover, again by the previous lemma, we have

\begin{center}
    $\begin{array}{lll}
        d(\mu)\{\alpha\}_{1} & = & \alpha\{\alpha\}_{1}+\mu\{\alpha\}_{1}\{\alpha\}_{1}-\beta\circledcirc(1+\mu)\{\alpha\}_{1}  \\
        & = & \alpha\{\alpha\}_{1}+\mu\{\alpha\{\alpha\}_{1}\}_{1}+\mu\{\alpha,\alpha\}_{1,1}-\beta\circledcirc(1+\mu;\alpha)-\beta\circledcirc(1+\mu;\mu\{\alpha\}_{1}).\\
    \end{array}$
\end{center}

Then
$$d(\beta)\circledcirc(1+\mu)=\beta\circledcirc(1+\mu;d(\mu))-\beta\circledcirc(1+\mu;\alpha)-\beta\circledcirc(1+\mu;\mu\{\alpha\}_{1})+\mu\{\alpha,\alpha\}_{1,1}.$$

\noindent We note that $\mu\{\alpha,\alpha\}_{1,1}=0$. Indeed, if we take the notations of the proof of Theorem \ref{prelie1}, we have that $\mu\{\alpha,\alpha\}_{1,1}=\Gamma(\mathcal{P}re\mathcal{L}ie,\psi_{\mu,\alpha,\alpha})(\mathcal{O}F_2(e,e_1,e_2))$ where $|e|=0$ and $|e_1|=|e_2|=1$ are formal elements. We explicitly have that
$$\mathcal{O}F_2(e,e_1,e_2)=F_2(e,e_1,e_2)+((12).F_2)(e_1,e,e_2)-((13).F_2)(e_2,e_1,e)$$
$$-F_2(e,e_2,e_1)-((12).F_2)(e_2,e,e_1)+((13).F_2)(e_1,e_2,e).$$

\noindent Applying $\psi_{\mu,\alpha,\alpha}$ will then give $\mu\{\alpha,\alpha\}_{1,1}=0$. We then finally have
$$d(\beta)\circledcirc(1+\mu)=-\beta\circledcirc(1+\mu;\beta\circledcirc(1+\mu)).$$

By the previous lemma, this gives
$$d(\beta)\circledcirc(1+\mu) = -\beta\{\beta\}_{1}\circledcirc(1+\mu)$$

\noindent and then $(d(\beta)+\beta\{\beta\}_{1})\circledcirc(1+\mu)=0$, that is to say $d(\beta)+\beta\{\beta\}_{1}=0$ by composing with $(1+\mu)^{\circledcirc-1}$ on the right. We thus have proved that $\beta\in\mathcal{MC}(L)$.\\

We now need to check that we have indeed an action of $G$ on $\mathcal{MC}(L)$. We have that $1+0$ acts trivially on $\mathcal{MC}(L)$, so we just need to prove that $((1+\nu)\circledcirc(1+\mu))\cdot\alpha=(1+\nu)\cdot((1+\mu)\cdot\alpha)$.\\

By hypothesis, we have the following equations:
$$d(\mu)=\alpha+\mu\{\alpha\}_{1}-\beta\circledcirc(1+\mu),$$
$$d(\nu)=\beta+\nu\{\beta\}_{1}-\gamma\circledcirc(1+\nu).$$

Let $\displaystyle 1+\lambda=(1+\nu)\circledcirc(1+\mu)=1+\mu+\nu\circledcirc(1+\mu)$. We compute:

\begin{center}
$\begin{array}{lll}
    \alpha+\lambda\{\alpha\}_{1}-\gamma\circledcirc(1+\lambda) & = & \displaystyle\alpha+\mu\{\alpha\}_{1}+\nu\circledcirc(1+\mu)\{\alpha\}_{1}+d(\nu)\circledcirc(1+\mu)\\
     &  & \displaystyle-\beta\circledcirc(1+\mu)-\nu\{\beta\}_{1}\circledcirc(1+\mu)\\
     & &\\
     & = & \displaystyle d(\mu)+d(\nu)\circledcirc(1+\mu)+\nu\circledcirc(1+\mu;\alpha)\\
     & &\displaystyle+\nu\circledcirc(1+\mu;\mu\{\alpha\}_{1}) -\nu\circledcirc(1+\mu;\beta\circledcirc(1+\mu))
\end{array}$
\end{center}

\noindent by the previous lemma. We then have

\begin{center}
$\begin{array}{lll}
    \alpha+\lambda\{\alpha\}_{1}-\gamma\circledcirc(1+\nu)\circledcirc(1+\mu) & = & d(\mu)+d(\nu)\circledcirc(1+\mu)+\nu\circledcirc(1+\mu;d(\mu))\\
     & = & d(\lambda),
\end{array}$
\end{center}

\noindent which proves the theorem.
\end{proof}





We end this section with the definition of the \textit{Deligne groupoid}.

\begin{propdef}
    Let $L$ be a complete $\Gamma(\mathcal{P}re\mathcal{L}ie,-)$-algebra. We let $\text{\normalfont Deligne}(L)$ to be the category with $\mathcal{MC}(L)$ as set of objects and $\displaystyle \text{\normalfont Mor}_{\text{\normalfont Deligne}(L)}(\alpha,\beta)=\{(1+\mu)\in G\ |\ (1+\mu)\cdot\alpha=\beta\}$.\\ Then $\text{\normalfont Deligne}(L)$ is a groupoid called the {\normalfont Deligne groupoid} of $L$.
\end{propdef}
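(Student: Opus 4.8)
The plan is to recognize $\text{\normalfont Deligne}(L)$ as the transformation (action) groupoid attached to the action of the gauge group $G=(1+L^0,\circledcirc,1)$ on the set $\mathcal{MC}(L)$, and then to read off the groupoid axioms from the group axioms for $G$ (assertion $(i)$ of Theorem \ref{theoremA}, i.e.\ Lemmas \ref{ass} and \ref{inverse} together with the unit remark for $\circledcirc$) and from the fact that the displayed formula
$$(1+\mu)\cdot\alpha=(\alpha+\mu\{\alpha\}_{1}-d(\mu))\circledcirc(1+\mu)^{\circledcirc-1}$$
defines an action of $G$ on $\mathcal{MC}(L)$ (assertion $(ii)$ of Theorem \ref{theoremA}). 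In particular this formula shows $(1+\mu)\cdot\alpha\in\mathcal{MC}(L)$ whenever $\alpha\in\mathcal{MC}(L)$, so the hom-sets are defined on the correct source and target.

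First I would check that $\text{\normalfont Deligne}(L)$ is a category. For each object $\alpha\in\mathcal{MC}(L)$ the identity morphism is $1=1+0\in G$, which lies in $\text{\normalfont Mor}(\alpha,\alpha)$ since $1$ acts trivially. Given $(1+\mu)\in\text{\normalfont Mor}(\alpha,\beta)$ and $(1+\nu)\in\text{\normalfont Mor}(\beta,\gamma)$, their composite is taken to be the product $(1+\nu)\circledcirc(1+\mu)$; this lies in $1+L^0=G$ because $\circledcirc$ restricts to a binary operation on $1+L^0$ (the remark after the definition of $\circledcirc$), and it lies in $\text{\normalfont Mor}(\alpha,\gamma)$ because, by the action property already used in the proof of assertion $(ii)$ of Theorem \ref{theoremA},
$$((1+\nu)\circledcirc(1+\mu))\cdot\alpha=(1+\nu)\cdot((1+\mu)\cdot\alpha)=(1+\nu)\cdot\beta=\gamma.$$
Associativity of this composition follows from associativity of $\circledcirc$ (Lemma \ref{ass}) and unitality from the unit remark for $\circledcirc$, so $\text{\normalfont Deligne}(L)$ is indeed a category.

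It then remains to show that every morphism is invertible. For $(1+\mu)\in\text{\normalfont Mor}(\alpha,\beta)$ the element $(1+\mu)^{\circledcirc-1}$ exists in $G$ by Lemma \ref{inverse}, and it is a morphism $\beta\longrightarrow\alpha$ since
$$(1+\mu)^{\circledcirc-1}\cdot\beta=(1+\mu)^{\circledcirc-1}\cdot((1+\mu)\cdot\alpha)=\bigl((1+\mu)^{\circledcirc-1}\circledcirc(1+\mu)\bigr)\cdot\alpha=1\cdot\alpha=\alpha.$$
Composing $(1+\mu)$ with $(1+\mu)^{\circledcirc-1}$ in both orders exhibits a two-sided inverse in $\text{\normalfont Deligne}(L)$, whence $\text{\normalfont Deligne}(L)$ is a groupoid.

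I do not expect a genuine obstacle here: the substantial content — that $G$ is a group and that the above formula defines a group action on $\mathcal{MC}(L)$ — has already been established, and the present statement is the purely formal observation that a group acting on a set yields a groupoid whose hom-sets are the subsets $\{g\in G\mid g\cdot\alpha=\beta\}$. The only point worth making explicit is that composition of morphisms stays within $G$, which is exactly the assertion that $\circledcirc$ preserves $1+L^0$.
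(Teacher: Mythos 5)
Your proposal is correct and is exactly the paper's argument: the paper proves this statement by simply observing that it is a corollary of the preceding theorem (the gauge group action on $\mathcal{MC}(L)$), i.e.\ that $\text{\normalfont Deligne}(L)$ is the action groupoid of a group acting on a set, which is precisely what you spell out.
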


\begin{proof}
    It is a corollary of the previous theorem.
\end{proof}

\subsection{An integral Goldman-Millson theorem}\label{sec:24}

We conclude this part with an analogue of the Goldman-Millson theorem. This theorem allows us to give a link between two particular groupoids when changing a dg Lie algebra $L$ to another one $\overline{L}$ which is quasi-isomorphic to $L$ (see \cite[$\mathsection$2.4]{goldman}).\\


Let $A$ be a local artinian $\textbf{K}$-algebra with maximal ideal $\mathfrak{m}_A$, where $\textbf{K}$ is the field of fractions of some noetherian integral domain $\mathbb{K}$. Let $L$ be a $\Gamma(\mathcal{P}re\mathcal{L}ie,-)$-algebra (without any convergence hypothesis). If $\otimes$ denotes the tensor product over $\mathbb{K}$, then $L\otimes A$ is also a $\Gamma(\mathcal{P}re\mathcal{L}ie,-)$-algebra with the following definitions:
\begin{center}
    $\begin{array}{rll}
         (L\otimes A)^{k} & = & L^{k}\otimes A,\\
\gamma(\mathcal{O}t(x_{1}\otimes a_{1},\ldots,x_{n}\otimes a_{n}))& = & \gamma(\mathcal{O}t(x_{1},\ldots,x_{n}))\otimes a_{1}\cdots a_{n},\\
d(x\otimes a) & = & d(x)\otimes a.\\
    \end{array}$
\end{center}

To retrieve our convergence hypothesis, we can consider the sub $\Gamma(\mathcal{P}re\mathcal{L}ie,-)$-algebra $L\otimes\mathfrak{m}_A$. This $\Gamma(\mathcal{P}re\mathcal{L}ie,-)$-algebra has a filtration given by
$$F_n(L\otimes\mathfrak{m}_A)=L\otimes\mathfrak{m}_A^n$$

\noindent which is $0$ for $n$ big enough, because $\mathfrak{m}_A$ is nilpotent. In particular, our series will be reduced to finite sums.\\

Let $\text{\normalfont Deligne}(L,A)=\text{\normalfont Deligne}(L\otimes\mathfrak{m}_A)$ the associated Deligne groupoid. As in \cite[$\mathsection$2.3]{goldman}, we remark that $\text{\normalfont Deligne}(-,-)$ defines a bifunctor such that, for all morphisms of $\Gamma(\mathcal{P}re\mathcal{L}ie,-)$-algebras $\varphi:L\longrightarrow\overline{L}$ and for all morphisms of algebras $\psi:A\longrightarrow\overline{A}$, we have the following diagram
\begin{center}
    $\begin{tikzcd}
        \text{\normalfont Deligne}(L,A)\arrow[r, "\varphi_{*}"]\arrow[d, "\psi_{*}"'] & \text{\normalfont Deligne}(\overline{L},A)\arrow[d, "\psi_{*}"]\\
        \text{\normalfont Deligne}(L,\overline{A})\arrow[r, "\varphi_{*}"'] & \text{\normalfont Deligne}(\overline{L},\overline{A})
    \end{tikzcd}$
\end{center}

\noindent which is commutative.\\

We can now prove Theorem \ref{theoremB}.

\begin{thm}
    Let $\mathbb{K}$ be a noetherian integral domain and $\textbf{K}$ its field of fractions. Let $L$ and $\overline{L}$ be two positively graded $\Gamma(\mathcal{P}re\mathcal{L}ie,-)$-algebras. Let $\varphi:L\longrightarrow\overline{L}$ be a morphism of $\Gamma(\mathcal{P}re\mathcal{L}ie,-)$-algebras such that $H^{0}(\varphi)$ and $H^{1}(\varphi)$ are isomorphisms, and $H^{2}(\varphi)$ is a monomorphism. Then for every local artinian $\textbf{K}$-algebras $A$, the induced functor $\varphi_{*}:\text{\normalfont Deligne}(L,A)\longrightarrow\text{\normalfont Deligne}(\overline{L},A)$ is an equivalence of groupoids.
\end{thm}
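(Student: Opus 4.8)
The plan is to follow the classical strategy of Goldman–Millson (as in \cite[$\mathsection$2.4]{goldman}), adapting each step so it uses only the weighted brace operations and remains valid over the noetherian integral domain $\mathbb{K}$ (and hence over its field of fractions $\textbf{K}$). We argue by induction on the length of the nilpotent filtration of $\mathfrak{m}_A$, i.e.\ on the smallest $N$ with $\mathfrak{m}_A^N=0$. For the inductive step we pick an ideal $I\subset A$ with $I\cdot\mathfrak{m}_A=0$, $I\neq 0$, and $I$ of $\textbf{K}$-dimension one, so that $\bar A=A/I$ has strictly shorter filtration and we have a small extension $0\to I\to A\to\bar A\to 0$. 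The base case $N=1$ is trivial since $\mathfrak{m}_A=0$ forces $L\otimes\mathfrak{m}_A=0$. By the commutative square of bifunctors recalled just before the statement, it suffices to analyze the square relating $\mathrm{Deligne}(L,A)\to\mathrm{Deligne}(\overline{L},A)$ to $\mathrm{Deligne}(L,\bar A)\to\mathrm{Deligne}(\overline{L},\bar A)$, where the vertical arrows are induced by $A\to\bar A$.

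The heart of the argument is an obstruction-theoretic analysis of a small extension. Writing $V=I$ (a one-dimensional $\textbf{K}$-vector space, on which $\mathfrak{m}_A$ acts by zero), the fiber of $\mathcal{MC}(L\otimes\mathfrak{m}_A)\to\mathcal{MC}(L\otimes\mathfrak{m}_{\bar A})$ over a class $\bar\alpha$ is controlled as follows. Because $I\cdot\mathfrak m_A=0$, for $\alpha$ a lift of $\bar\alpha$ and $z\in L^1\otimes V$ the weighted brace $ (\alpha+z)\{\alpha+z\}_1=\alpha\{\alpha\}_1 + z\{\alpha\}_1+\alpha\{z\}_1$ (the term $z\{z\}_1$ lands in $L\otimes I^2=0$), and by the derivation property with respect to $d$ together with formula $(i)$ of Theorem \ref{Cesaro}, the map $z\mapsto z\{\alpha\}_1+\alpha\{z\}_1=:[\alpha,z]$ makes $L\otimes V$ into a complex twisted by $\alpha$; the obstruction to lifting $\bar\alpha$ lives in $H^2(L)\otimes V$, the lifts form a torsor under $H^1(L)\otimes V$, and the stabilizer of a lift under the gauge group $1+L^0\otimes\mathfrak m_A$ surjecting onto the one of $\bar\alpha$ is governed by $H^0(L)\otimes V$. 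Concretely: gauge-equivalence classes of lifts of $\bar\alpha$ form a torsor under $\operatorname{coker}\!\big(H^0(L)\otimes V\to \ker(H^2\text{-obstruction})\big)$, and since the obstruction and all transition maps are natural in $L$, the hypotheses $H^0(\varphi),H^1(\varphi)$ isomorphisms and $H^2(\varphi)$ injective give: $\varphi_*$ on $\pi_0$ of the Deligne groupoids over $A$ is a bijection given it is one over $\bar A$, and $\varphi_*$ on automorphism groups $\mathrm{Aut}(\alpha)$ over $A$ is an isomorphism given it is one over $\bar A$. A functor of groupoids that is bijective on $\pi_0$ and on all automorphism groups is an equivalence, which closes the induction.

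Two points require care in positive characteristic and over a general base. First, the five-lemma-type diagram chase producing the torsor description must be carried out with the twisted differential $[\alpha,-]$, and one must check that $[\alpha,-]^2=0$; this follows from the Maurer–Cartan equation for $\alpha$ together with the graded pre-Lie identity and the Leibniz rule of Theorem \ref{prelie2} applied to $\alpha\{\alpha,-\}$ — exactly the computation that in characteristic $0$ reduces to $[\,[\alpha,\alpha],-\,]=0$, but here done purely with $-\{-\}_1$ and $-\{-,-\}_{1,1}$. Second, positivity of the grading is used precisely so that the twisted cohomology in degrees $\le 2$ is computed by a bounded-below complex and the standard long exact sequence / obstruction formalism applies; it also ensures $H^0$ behaves well for the stabilizer computation. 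The main obstacle is bookkeeping: verifying that the exact sequences controlling objects and morphisms are strictly natural in $L$ so that $\varphi$ transports them, and assembling the torsor statements into the claim that $\varphi_*$ is essentially surjective, full, and faithful over $A$ assuming the same over $\bar A$. Once that naturality is in place, the induction and the final ``bijective on $\pi_0$ and on automorphisms $\Rightarrow$ equivalence'' step are formal.
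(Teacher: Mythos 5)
Your proposal follows essentially the same route as the paper: the Goldman--Millson induction over small extensions $\mathfrak{I}\cdot\mathfrak{m}_A=0$, with an obstruction class in $H^2(L\otimes\mathfrak{I})$ for lifting Maurer--Cartan elements, a torsor structure under $H^1(L\otimes\mathfrak{I})$ for gauge classes of lifts, and $H^0(L\otimes\mathfrak{I})$ controlling the fibers of the automorphism groups, all written in terms of the weighted braces and concluded by naturality and the ``bijective on $\pi_0$ and on automorphisms'' criterion. One simplification you overlook: since $\alpha\in L^1\otimes\mathfrak{m}_A$ and $z\in L^1\otimes\mathfrak{I}$ with $\mathfrak{I}\cdot\mathfrak{m}_A=0$, the cross terms $z\{\alpha\}_1+\alpha\{z\}_1$ already vanish, so your ``twisted'' differential $[\alpha,-]$ is identically zero on $L\otimes\mathfrak{I}$, the verification of $[\alpha,-]^2=0$ is vacuous, and the relevant cohomology is just the plain $H^*(L\otimes\mathfrak{I})$, exactly as in the paper's construction of the obstruction maps $o_2$, $o_1$, $o_0$.
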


\begin{proof}
    The proof is close to the one given in \cite[$\mathsection$2.5-$\mathsection$2.11]{goldman}. By the same arguments as in \cite[$\mathsection$2.5]{goldman}, we are reduced to prove the following. Let $A$ be a local artinian $\textbf{K}$-algebra with maximal ideal $\mathfrak{m}_A$ and $\mathfrak{I}\subset A$ be an ideal such that $\mathfrak{I}\cdot\mathfrak{m}_A=0$. Then if the theorem holds for $A/\mathfrak{I}$, then it also holds  for $A$.\\

    Our first goal is to prove the same proposition given in \cite[$\mathsection$2.6]{goldman}, which constructs three obstruction maps $o_2, o_1$ and $o_0$. Let $\pi_\ast: L\otimes A\longrightarrow L\otimes A/\mathfrak{I}$ be the map induced by the canonical projection $\pi:A\longrightarrow A/\mathfrak{I}$.\\
    
    We first define $o_2:Obj(\text{\normalfont Deligne}(L,A/\mathfrak{I}))\longrightarrow H^2(L\otimes\mathfrak{I})$ which is such that $o_2(\omega)=0$ if and only if there exists $\widetilde{\omega}\in Obj(\text{\normalfont Deligne}(L,A))$ with $\pi_\ast(\widetilde{\omega})=\omega$.\\
    
    \noindent Let $\omega\in Obj(\text{\normalfont Deligne}(L,A/\mathfrak{I}))\subset\mathfrak{m}_A/\mathfrak{I}$. Let $\widetilde{\omega}\in L^1\otimes\mathfrak{m}_A$ be such that $\pi_\ast(\widetilde{\omega})=\omega$ and $\mathcal{Q}(\widetilde{\omega})=d(\widetilde{\omega})+\widetilde{\omega}\{\widetilde{\omega}\}_1$. Then $\pi_\ast(\mathcal{Q}(\widetilde{\omega}))=0$ to that $\mathcal{Q}(\widetilde{\omega})\in L^2\otimes\mathfrak{I}$. By using that $\mathfrak{I}\cdot\mathfrak{m}_A=0$, we obtain
    \begin{center}
    $\begin{array}{lll}
        d(\mathcal{Q}(\widetilde{\omega})) & = & d(\widetilde{\omega})\{\widetilde{\omega}\}_{1}-\widetilde{\omega}\{d(\widetilde{\omega})\}_{1} \\
         & = & -\widetilde{\omega}\{\widetilde{\omega}\}_{1}\{\widetilde{\omega}\}_{1}+\widetilde{\omega}\{\widetilde{\omega}\{\widetilde{\omega}\}_{1}\}_{1}\\
         & = & \widetilde{\omega}\{\widetilde{\omega},\widetilde{\omega}\}_{1,1}\\
         & = & 0.
    \end{array}$
\end{center}

\noindent This implies that $\mathcal{Q}(\widetilde{\omega})\in Z^2(L\otimes\mathfrak{I})$. Let $\widetilde{\omega}'\in L^1\otimes\mathfrak{m}_A$ be some other element such that $\pi_\ast(\widetilde{\omega}')=\omega$. In particular, $\widetilde{\omega}-\widetilde{\omega}'\in L^1\otimes\mathfrak{I}$. We then have, using again that $\mathfrak{I}\cdot\mathfrak{m}_A=0$,
\begin{center}
    $\begin{array}{lll}
        \mathcal{Q}(\widetilde{\omega}')-\mathcal{Q}(\widetilde{\omega}) & = & d(\widetilde{\omega}'-\widetilde{\omega})+(\widetilde{\omega}'-\widetilde{\omega})\{\widetilde{\omega}'-\widetilde{\omega}\}_{1}\\
         & & +\widetilde{\omega}\{\widetilde{\omega}'-\widetilde{\omega}\}_{1}+(\widetilde{\omega}'-\widetilde{\omega})\{\widetilde{\omega}\}_{1}\\
         & = & d(\widetilde{\omega}'-\widetilde{\omega}).\\
    \end{array}$
\end{center}

\noindent We then let $o_2(\omega)$ to be the class of $\mathcal{Q}(\widetilde{\omega})$ in $H^2(L\otimes\mathfrak{I})$. Suppose that $o_2(\omega)=0$. Then by definition, there exists some $\psi\in L^1\otimes\mathfrak{I}$ such that $\mathcal{Q}(\widetilde{\omega})=d(\psi)$. We can check that $\widetilde{\omega}':=\widetilde{\omega}-\psi\in Obj(\text{\normalfont Deligne}(L,A))$ and $\pi_\ast(\widetilde{\omega}')=\omega$. In the converse direction, we obviously have that if $\omega=\pi_\ast(\widetilde{\omega})$ with $\widetilde{\omega}\in Obj(\text{\normalfont Deligne}(L,A))$, then $o_2(\omega)=0$.\\

    We now prove the following analogue of the lemma given in \cite[$\mathsection$2.8]{goldman}. For all $\alpha\in L^{1}\otimes\mathfrak{m}_A,\eta\in L^{0}\otimes\mathfrak{m}_A$ and $u\in L^{0}\otimes\mathfrak{I}$, we have
    $$(1+u+\eta)\cdot\alpha=(1+\eta)\cdot\alpha-d(u).$$

    \noindent Let $\beta=(1+\eta)\cdot\alpha$. We have
    \begin{center}
    $\begin{array}{lll}
        (\beta-d(u))\circledcirc(1+u+\eta) & = & \displaystyle\sum_{n=0}^{+\infty}(\beta-d(u))\{u+\eta\}_{n}  \\
         & = & \displaystyle\sum_{n=0}^{+\infty}\sum_{k=0}^{n}(\beta-d(u))\{u,\eta\}_{k,n-k}.\\
        \end{array}$
        \end{center}
    \noindent Since $\mathfrak{I}\cdot\mathfrak{m}_A=0$, and because $u\in L^1\otimes\mathfrak{I}$ and $\eta\in L^0\otimes\mathfrak{m}_A$, the terms with $n\neq 0$ and $k\neq 0$ are $0$ by definition of the $\Gamma(\mathcal{P}re\mathcal{L}ie,-)$-algebra structure in $L\otimes A$. We then have
    \begin{center}
    $\begin{array}{lll}
         (\beta-d(u))\circledcirc(1+u+\eta) & = & \displaystyle\beta-d(u)+\sum_{n=1}^{+\infty}\beta\{\eta\}_{n}\\
         & = & \beta\circledcirc(1+\eta)-d(u)\\
         & = & \alpha+\eta\{\alpha\}_{1}-d(\eta)-d(u)\\
         & = & \alpha+(u+\eta)\{\alpha\}_{1}-d(u+\eta)\\
    \end{array}$
\end{center}

\noindent which finally gives
$$(1+\eta)\cdot\alpha-d(u)=(\alpha+(u+\eta)\{\alpha\}_1-d(u+\eta))\circledcirc (1+u+\eta)^{\circledcirc-1}=(1+u+\eta)\cdot\alpha.$$

Let $\xi\in Obj(\text{\normalfont Deligne}(L,A/\mathfrak{I}))$. We let $\pi_\ast^{-1}(\xi)$ to be the category whose objects are elements $\omega\in Obj(\text{\normalfont Deligne}(L,A))$ such that $\pi_\ast(\omega)=\xi$, and with morphisms the gauge group elements $\gamma\in \text{\normalfont Deligne}(L,A)$ such that $\pi_\ast(\gamma)=1$. We now construct a map $o_1:Obj(\pi_\ast^{-1}(\xi))\times Obj(\pi_\ast^{-1}(\xi))\longrightarrow Z^1(L\otimes\mathfrak{I})$ such that $o_1(\alpha,\beta)=0$ if and only if there exists a morphism $\gamma$ in $\pi_\ast^{-1}(\xi)$ such that $\gamma(\alpha)=\beta$.\\

\noindent Let $\eta\in Z^1(L\otimes\mathfrak{I})$. For every $\alpha\in Obj(\pi_\ast^{-1}(\xi))$, we have that $\mathcal{Q}(\alpha+\eta)=\mathcal{Q}(\alpha)+\alpha\{\eta\}_1+\eta\{\eta\}_1=0$, since $\mathfrak{I}\cdot\mathfrak{m}_A=0$. We then have that $\alpha+\eta\in Obj(\text{\normalfont Deligne}(L,A))$, and $\pi_\ast(\alpha+\eta)=\xi$. We thus have an action of the group $Z^1(L\otimes\mathfrak{I})$ on the set $Obj(\pi_\ast^{-1}(\xi))$. This action is simply transitive. Indeed, let $\alpha,\beta\in Obj(\text{\normalfont Deligne}(L,A))$ be such that $\pi_\ast(\alpha)=\pi_\ast(\beta)=\xi$. Then $\alpha-\beta\in L^1\otimes\mathfrak{I}$ and $d(\alpha-\beta)=\mathcal{Q}(\alpha)-\mathcal{Q}(\beta)=0$. The element $\eta:=\alpha-\beta$ is then an element of $Z^1(L\otimes\mathfrak{I})$ whose action on $\beta$ is $\alpha$. Since $\alpha-\beta=0$ if and only if $\alpha=\beta$, the action is indeed simply transitive. We then can set $o_1(\alpha,\beta)$ to be the class of $\alpha-\beta$ in $H^1(L\otimes\mathfrak{I})$.\\

\noindent Now, if there exists some element $1+u\in G$ in the gauge group of $L\otimes \mathfrak{m}_A$ such that $(1+u)\cdot\alpha=\beta$ and $\pi_\ast(1+u)=1$, we then have, according to the analogue of the lemma given in \cite[$\mathsection$2.8]{goldman}, that $\beta=(1+u)\cdot\alpha=\alpha-d(u)$ so that $o_1(\alpha,\beta)=0$. In the converse direction, if $o_1(\alpha,\beta)=0$, then there exists $u\in L^0\otimes\mathfrak{I}$ such that $\alpha-\beta=d(u)$. By using the lemma again, we find that $(1+u)\cdot\alpha=\beta$.\\

Let $\widetilde{\alpha},\widetilde{\beta}\in Obj(\text{\normalfont Deligne}(L,A))$ and $\alpha=\pi_\ast(\widetilde{\alpha}),\beta=\pi_\ast(\widetilde{\beta})$ be such that there exists an element $1+u$ of the gauge group of $L\otimes\mathfrak{m}_A/\mathfrak{I}$ with $(1+u)\cdot\alpha=\beta$. Let $\pi_\ast^{-1}(1+u)$ be the set of gauge group elements $1+\widetilde{u}$ in $L\otimes\mathfrak{m}_A$ such that $(1+\widetilde{u})\cdot\widetilde{\alpha}=\widetilde{\beta}$ and $\pi_\ast(1+\widetilde{u})=1+u$. We finally construct a map $o_0:\pi_\ast^{-1}(1+u)\times\pi_\ast^{-1}(1+u)\longrightarrow H^0(L\otimes\mathfrak{I})$ which satisfies the following. For every $1+\widetilde{u},1+\widetilde{u}'\in\pi_\ast^{-1}(1+u)$, we have that $o_0(1+\widetilde{u},1+\widetilde{u}')=0$ if and only if $\widetilde{u}=\widetilde{u}'$.\\

\noindent We define a simply transitive action of $H^0(L\otimes\mathfrak{I})$ on the set $\pi_\ast^{-1}(1+u)$. Let $1+\widetilde{u}\in\pi_\ast^{-1}(1+u)$ and let $w\in L\otimes\mathfrak{I}$. We have that
$$(1+\widetilde{u}+w)\cdot\widetilde{\alpha}=(1+\widetilde{u})\cdot\widetilde{\alpha}-d(w)=\widetilde{\beta}-d(w).$$

\noindent Therefore, if $w\in H^0(L\otimes\mathfrak{I})=Z^0(L\otimes\mathfrak{I})$, then $(1+\widetilde{u}+w)\cdot\widetilde{\alpha}=\widetilde{\beta}$. This then defines an action of $H^0(L\otimes\mathfrak{I})$ on $\pi_\ast^{-1}(1+u)$. Let $1+\widetilde{u}'\in\pi_\ast^{-1}(1+u)$. We set $w=o_0(1+\widetilde{u},1+\widetilde{u}'):=\widetilde{u}-\widetilde{u}'\in L\otimes\mathfrak{I}$. Then
$$d(w)=d(o_0(1+\widetilde{u},1+\widetilde{u}'))=(1+\widetilde{u}')\cdot\widetilde{\alpha}-(1+\widetilde{u}'+w)\cdot\widetilde{\alpha}=\widetilde{\beta}-\widetilde{\beta}=0.$$

\noindent We then obtain that $w\in H^0(L\otimes\mathfrak{I})$ is the unique element which sends $1+\widetilde{u}$ to $1+\widetilde{u}'$. The action is then simply transitive, and we have constructed $o_0$.\\

This then proves an analogue of the proposition given in \cite[$\mathsection$2.6]{goldman}. The other parts of the proof only use these three obstructions maps and do not directly use the structure of $L$. We can then follow exactly the same arguments in \cite[$\mathsection$2.11]{goldman} to obtain the result.

\end{proof}

\begin{defi}
    Two positively graded $\Gamma(\mathcal{P}re\mathcal{L}ie,-)$-algebras $L$ and $\overline{L}$ are \text{\normalfont quasi-isomorphic} if there exists a zig-zag of morphisms of $\Gamma(\mathcal{P}re\mathcal{L}ie,-)$-algebras
    $$L=L_{0}\longrightarrow L_{1}\longleftarrow \cdots\longrightarrow L_{m-1}\longleftarrow L_{m}=\overline{L}$$

    \noindent in which each morphism induces an isomorphism in cohomology.
\end{defi}

\begin{cor}
If $L$ and $\overline{L}$ are quasi-isomorphic, then for all local artinian $\textbf{K}$-algebras $A$, the groupoids $\text{\normalfont Deligne}(L,A)$ and $\text{\normalfont Deligne}(\overline{L},A)$ are equivalent. More precisely, we have a zig-zag of equivalence of groupoids
$$\text{\normalfont Deligne}(L,A)\longrightarrow \text{\normalfont Deligne}(L_{1},A)\longleftarrow \cdots\longrightarrow \text{\normalfont Deligne}(L_{m-1},A)\longleftarrow \text{\normalfont Deligne}(\overline{L},A)$$

\noindent which is natural in $A$.
\end{cor}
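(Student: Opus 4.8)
The plan is to deduce the statement directly from Theorem~\ref{theoremB} together with the bifunctoriality of $\text{\normalfont Deligne}(-,-)$ recalled just before that theorem. First I would fix a local artinian $\textbf{K}$-algebra $A$ and unwind the definition of quasi-isomorphism: by hypothesis there is a zig-zag
$$L=L_{0}\longrightarrow L_{1}\longleftarrow \cdots\longrightarrow L_{m-1}\longleftarrow L_{m}=\overline{L}$$
of morphisms of positively graded $\Gamma(\mathcal{P}re\mathcal{L}ie,-)$-algebras, each of which induces an isomorphism $H^{k}(L_{i})\xrightarrow{\ \sim\ }H^{k}(L_{i\pm1})$ in every cohomological degree $k$. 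Applying the functor $\text{\normalfont Deligne}(-,A)$ to this zig-zag produces the displayed chain of functors
$$\text{\normalfont Deligne}(L,A)\longrightarrow \text{\normalfont Deligne}(L_{1},A)\longleftarrow \cdots\longrightarrow \text{\normalfont Deligne}(L_{m-1},A)\longleftarrow \text{\normalfont Deligne}(\overline{L},A).$$

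Next, for each single arrow $\varphi$ of the zig-zag, the map $H^{k}(\varphi)$ is an isomorphism for all $k$; in particular $H^{0}(\varphi)$ and $H^{1}(\varphi)$ are isomorphisms and $H^{2}(\varphi)$ is (an isomorphism, hence in particular) a monomorphism, so Theorem~\ref{theoremB} applies and shows that $\varphi_{*}\colon\text{\normalfont Deligne}(L_{i},A)\longrightarrow\text{\normalfont Deligne}(L_{i\pm1},A)$ is an equivalence of groupoids. Since equivalences of groupoids are stable under composition and under passage to a quasi-inverse, the displayed chain is a zig-zag of equivalences; in particular $\text{\normalfont Deligne}(L,A)$ and $\text{\normalfont Deligne}(\overline{L},A)$ are equivalent.

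It remains to record naturality in $A$. Given a morphism $\psi\colon A\longrightarrow\overline{A}$ of local artinian $\textbf{K}$-algebras, the bifunctoriality square for $\text{\normalfont Deligne}(-,-)$ gives, for each arrow $\varphi$ of the zig-zag, a commutative diagram
\begin{center}
$\begin{tikzcd}
\text{\normalfont Deligne}(L_{i},A)\arrow[r, "\varphi_{*}"]\arrow[d, "\psi_{*}"'] & \text{\normalfont Deligne}(L_{i\pm1},A)\arrow[d, "\psi_{*}"]\\
\text{\normalfont Deligne}(L_{i},\overline{A})\arrow[r, "\varphi_{*}"'] & \text{\normalfont Deligne}(L_{i\pm1},\overline{A}),
\end{tikzcd}$
\end{center}
and pasting these squares along the zig-zag shows that the chain of equivalences is compatible with the base-change functors $\psi_{*}$, i.e.\ it is natural in $A$. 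The argument is essentially a formal consequence of Theorem~\ref{theoremB}, so I do not expect a genuine obstacle; the only point deserving a word of care is that every intermediate term $L_{i}$ must itself be a positively graded $\Gamma(\mathcal{P}re\mathcal{L}ie,-)$-algebra in order to feed it into Theorem~\ref{theoremB}, which we understand to be part of the meaning of ``quasi-isomorphic'' in the preceding definition.
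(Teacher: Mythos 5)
Your proposal is correct and is exactly the argument the paper intends: the corollary is stated as an immediate consequence of Theorem \ref{theoremB} applied to each arrow of the zig-zag (each being a quasi-isomorphism, so the hypotheses on $H^0$, $H^1$, $H^2$ hold a fortiori), with naturality in $A$ following from the bifunctoriality square recorded just before the theorem. The paper gives no separate proof, and your write-up supplies precisely the routine details it leaves implicit.
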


\section{Application in homotopy theory for operads}

The goal of this section is to establish Theorem \ref{theoremC}, which gives a computation of $\pi_{0}(\text{\normalfont Map}(B^{c}(\mathcal{C}),\mathcal{P}))$ where $\mathcal{C}$ is a $\Sigma_{*}$-cofibrant coaugmented cooperad, $\mathcal{P}$ an augmented operad and $B^{c}$ the cobar construction (see \cite{fresselivre} or \cite{loday} for a definition of this construction). In the case of a field of characteristic $0$, it can be expressed in terms of the Deligne groupoid with the structure of dg Lie algebra of ${\normalfont \text{Hom}}_\Sigma(\overline{\mathcal{C}},\overline{\mathcal{P}})$. We extend this result using a structure of $\Gamma(\mathcal{P}re\mathcal{L}ie,-)$-algebra that underlies this dg Lie algebra structure.\\

In $\mathsection$\ref{sec:31}, we define infinitesimal $k$-compositions and $k$-decompositions that generalize the usual infinitesimal composition and decomposition operations given in \cite[$\mathsection$6.1]{loday}. These operations will be used in the next section to write more easily weighted brace operations of the convolution operad.\\

In $\mathsection$\ref{sec:32}, we recall the definition of the convolution operad ${\normalfont \text{Hom}}(\mathcal{C},\mathcal{P})$, as given in \cite[$\mathsection$6.4.1]{loday}, and study the $\Gamma(\mathcal{P}re\mathcal{L}ie,-)$-algebra structure of $\text{\normalfont Hom}_\Sigma(\mathcal{C},\mathcal{P})$. In the same way that infinitesimal composition and decomposition can be used to express the pre-Lie algebra structure of the convolution operad (see \cite[Proposition 6.4.5]{loday}), we will use infinitesimal $k$-compositions and $k$-decompositions to compute weighted brace operations of the convolution operad.\\

In $\mathsection$\ref{sec:33}, we just use a cylinder object of $B^c(\mathcal{C})$ given by Fresse in \cite[$\mathsection$5.1]{fressecyl} to get our result: the quotient of $\text{\normalfont Hom}_\Sigma(\mathcal{C},\mathcal{P})$ by the gauge action gives $\pi_{0}(\text{\normalfont Map}(B^{c}(\mathcal{C}),\mathcal{P}))$.

\subsection{Infinitesimal compositions and decompositions of an operad and a cooperad}\label{sec:31}

We first introduce some definitions which will be useful for the computations.\\

Let $M$ and $N$ be two symmetric sequences such that $N(0)=0$. Recall that we have a monoidal structure on the category of symmetric sequences defined by
$$M\circ N(n)=\bigoplus_{k\geq 0} M(k)\otimes_{\Sigma_{k}}\left(\bigoplus_{i_{1}+\cdots+i_{k}=n} Ind_{\Sigma_{i_{1}}\times\cdots\times\Sigma_{i_{k}}}^{\Sigma_{n}}(N(i_{1})\otimes\cdots\otimes N(i_{k}))\right),$$

\noindent with as unit the symmetric sequence $I$ defined by 
$$I(n)=\left\{\begin{array}{lll}
     \mathbb{K} & \text{if }n=1  \\
    0 & \text{if }n\neq 1
\end{array}\right..$$

\noindent Every elements of $M\circ N(n)$ can be identified as a tree of the form:
\begin{center}
\begin{tikzpicture}[baseline={([yshift=-.5ex]current bounding box.center)}]
    \node[draw] (i) at (0,0) {$x$};
    \node[draw] (1) at (-1.5,1) {$y_{1}$};
    \node (1b) at (-2.5,2) {$i_{1}^1$};
    \node (1bb) at (-0.5,2) {$i_{r_1}^1$};
    \node[draw] (2) at (1.5,1) {$y_{n}$};
    \node (2b) at (0.5,2) {$i_{1}^n$};
    \node (2bb) at (2.5,2) {$i_{r_n}^n$};
    \node (ab) at (-1.5,2) {$\cdots$};
    \node (abb) at (1.5,2) {$\cdots$};
    \node (a) at (0,1) {$\cdots$};
    \node (b) at (0,-1) {$0$};
    \draw[-stealth] (2b) -- (2);
    \draw[-stealth] (2bb) -- (2);
    \draw[-stealth] (1b) -- (1);
    \draw[-stealth] (1bb) -- (1);
    \draw[-stealth] (2) -- (i);
    \draw[stealth-] (i) -- (1);
    \draw [-stealth] (i) -- (b);
\end{tikzpicture}\ \ ,
\end{center}

\noindent where $x\in M(n)$, $y_{1}\in N(r_1),\ldots,y_{n}\in N(r_n)$, and where $1\leq i_p^q\leq\sum_i r_i$ are labels which represent a permutation of $\Sigma_{r_1+\cdots+r_n}$.\\

Note that we can write $M\circ N (n)$ without quotients by the group permutations by taking a choice of set of representatives. This set is given by pointed shuffle permutations (see conventions):
$$M\circ N(n)=\bigoplus_{k\geq 0}M(k)\otimes\left(\bigoplus_{i_1+\cdots+i_k=n}N(i_1)\otimes \cdots\otimes N(i_k)\otimes\mathbb{K}[Sh_\ast(i_1,\ldots,i_k)]\right).$$

We now generalize the definition of the infinitesimal composition/decomposition defined in \cite{loday}, in order to write some formulas in a more convenient way.

\begin{defi}\label{kcomp}
    Let $M$ and $N$ be two symmetric sequences. For all $k\geq 0$, we define a new symmetric sequence denoted by $M\circ_{(k)}N$ called the \text{\normalfont $k$-infinitesimal composite} of $M$ and $N$ defined, in each arity $n$, as the submodule of $M\circ (I\oplus N)(n)$ spanned by trees where exactly $k$ elements at level $2$ are in ${N}$, and the others in $I$.
\end{defi}

Let $M,M',N$ and $N'$ be symmetric sequences. One can easily check that if we have morphisms of symmetric sequences $f:M\longrightarrow M'$ and ${g}:{N}\longrightarrow{N}'$, then we have a morphism $f\circ_{(k)}g:M\circ_{(k)}N\longrightarrow M'\circ_{(k)}N'$ induced by $f\circ (id_I\oplus g)$.\\

Let $\mathcal{P}$ be an operad with composition $\gamma:\mathcal{P}\circ\mathcal{P}\longrightarrow\mathcal{P}$ and unit $\eta:I\longrightarrow\mathcal{P}$, and let $\mathcal{C}$ a cooperad with coproduct $\Delta:\mathcal{C}\longrightarrow\mathcal{C}\circ\mathcal{C}$ and counit $\varepsilon:\mathcal{C}\longrightarrow I$. We will suppose that $\mathcal{P}$ is augmented, i.e. the unit $\eta:I\longrightarrow\mathcal{P}$ admits a retraction $\pi:\mathcal{P}\longrightarrow I$. We then have that there exists a symmetric sequence $\overline{\mathcal{P}}$ with $\mathcal{P}\simeq I\oplus\overline{\mathcal{P}}$ such that the first projection on $\mathcal{P}$ is given by $\pi$. Similarly, we suppose that $\mathcal{C}$ is coaugmented, i.e. the counit $\varepsilon:\mathcal{C}\longrightarrow I$ admits a section $s:I\longrightarrow\mathcal{C}$. We then have that there exists a symmetric sequence $\overline{\mathcal{C}}$ with $\mathcal{C}\simeq I\oplus\overline{\mathcal{C}}$ such that the first projection is given by $\varepsilon$.\\

In the following, we assume that $\mathcal{C}(0)=\mathcal{P}(0)=0$ and $\mathcal{C}(1)=\mathcal{P}(1)=\mathbb{K}$.\\

We give an extension of the usual infinitesimal composition and decomposition operations given in \cite[$\mathsection$6.1]{loday} for $k=1$.

\begin{defi}\label{compo}
    Let $k\geq 1$.
    \begin{itemize}
        \item[-] We define {\normalfont the infinitesimal $k$-composition} in $\mathcal{P}$ as
        \begin{center}
\begin{tikzcd}
\gamma_{(k)}:\overline{\mathcal{P}}\circ_{(k)}\overline{\mathcal{P}}(n)\arrow[r] & \mathcal{P}\circ\mathcal{P}(n)\arrow[r, "\gamma"] & {\mathcal{P}}(n),
\end{tikzcd}
\end{center}

\noindent where the first map is the inclusion of $\overline{\mathcal{P}}\circ_{(k)}\overline{\mathcal{P}}$ in $\mathcal{P}\circ\mathcal{P}$.

    \item[-] We define the {\normalfont infinitesimal $k$-decomposition} in $\mathcal{C}$ as
    \begin{center}
\begin{tikzcd}
\Delta_{(k)}:{\mathcal{C}}(n)\arrow[r, "\Delta"] & \mathcal{C}\circ\mathcal{C}(n)\arrow[r] & \overline{\mathcal{C}}\circ_{(k)}\overline{\mathcal{C}}(n),
\end{tikzcd}
\end{center}

\noindent where the last map is the projection of $\mathcal{C}\circ\mathcal{C}$ onto $\overline{\mathcal{C}}\circ_{(k)}\overline{\mathcal{C}}$.
    \end{itemize}
\end{defi}

Because $\mathcal{C}$ is coaugmented, we have that the coproduct $\Delta:\mathcal{C}\longrightarrow\mathcal{C}\circ\mathcal{C}$ preserves the isomorphism $\mathcal{C}\simeq I\oplus\overline{\mathcal{C}}$ in the following sense. We have the isomorphism
$$\mathcal{C}\circ\mathcal{C}\simeq I\circ I\oplus \overline{\mathcal{C}}\circ I\oplus I\circ\overline{\mathcal{C}}\oplus\bigoplus_{k\geq 1}\overline{\mathcal{C}}\circ_{(k)}\overline{\mathcal{C}}.$$

\noindent Then, we get that the restriction of $\Delta$ on $I$ and on $\overline{\mathcal{C}}$ are such that $\Delta:I\longrightarrow I\circ I$ and $\Delta:\overline{\mathcal{C}}\longrightarrow \overline{\mathcal{C}}\circ I\oplus I\circ\overline{\mathcal{C}}\oplus\bigoplus_{k\geq 1}\overline{\mathcal{C}}\circ_{(k)}\overline{\mathcal{C}}$. We can then define the infinitesimal $k$-decompositions on $\overline{\mathcal{C}}$ by
\begin{center}
\begin{tikzcd}
\Delta_{(0)}:\overline{\mathcal{C}}\arrow[r, "\Delta"] & \displaystyle\overline{\mathcal{C}}\circ I\oplus I\circ\overline{\mathcal{C}}\oplus\bigoplus_{k\geq 1}\overline{\mathcal{C}}\circ_{(k)}\overline{\mathcal{C}}\arrow[r, two heads] & \overline{\mathcal{C}}\circ I\oplus I\circ\overline{\mathcal{C}},\\
\Delta_{(k)}:\overline{\mathcal{C}}\arrow[r, "\Delta"] & \displaystyle\overline{\mathcal{C}}\circ I\oplus I\circ\overline{\mathcal{C}}\oplus\bigoplus_{k\geq 1}\overline{\mathcal{C}}\circ_{(k)}\overline{\mathcal{C}}\arrow[r, two heads] & \displaystyle\overline{\mathcal{C}}\circ_{(k)}\overline{\mathcal{C}},
\end{tikzcd}
\end{center}

\noindent for all $k\geq 1$.\\


\subsection{$\Gamma(\mathcal{P}re\mathcal{L}ie,-)$-algebra structure of the convolution operad}\label{sec:32}

Let $M$ and $N$ be two symmetric sequences of differential graded $\mathbb{K}$-modules. We define a new symmetric sequence $\text{\normalfont Hom}(M,N)$ in dg $\mathbb{K}$-modules by $\text{\normalfont Hom}(M,N)(n)=\text{\normalfont Hom}(M(n),N(n))$, the differential graded module formed by the homogeneous morphisms $f:M(n)\longrightarrow N(n)$. The differential on $\text{\normalfont Hom}(M,N)$ is given by
$$d(f)=d_{M}\circ f-(-1)^{deg(f)}f\circ d_{N},$$

\noindent for all $f\in\text{\normalfont Hom}(M,N)$. The action of $\Sigma_{n}$ on $\text{\normalfont Hom}(M(n),N(n))$ is defined by
$$\forall x\in M(n), (\sigma\cdot f)(x)=\sigma\cdot f(\sigma^{-1}\cdot x),$$

\noindent for all $\sigma\in\Sigma_{n}$.

\begin{prop}\text{\normalfont (see \cite{loday})} Let $\mathcal{C}$ be a cooperad and $\mathcal{P}$ be an operad. Then $\text{\normalfont Hom}(\mathcal{C},\mathcal{P})$ has the structure of a dg operad called the {\normalfont convolution operad} of $\mathcal{C}$ and $\mathcal{P}$.
\end{prop}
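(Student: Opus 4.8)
The plan is to build the operadic composition on $\mathrm{Hom}(\mathcal{C},\mathcal{P})$ directly from the comultiplication $\Delta$ of $\mathcal{C}$ and the composition $\gamma_{\mathcal{P}}$ of $\mathcal{P}$, and then to deduce each operad axiom from the corresponding axiom of $\mathcal{C}$ or of $\mathcal{P}$. Concretely, for $f\in\mathrm{Hom}(\mathcal{C}(k),\mathcal{P}(k))$ and $g_j\in\mathrm{Hom}(\mathcal{C}(i_j),\mathcal{P}(i_j))$ with $i_1+\cdots+i_k=n$, I would set $\gamma(f;g_1,\ldots,g_k)\in\mathrm{Hom}(\mathcal{C}(n),\mathcal{P}(n))$ equal to the composite
$$\mathcal{C}(n)\xrightarrow{\ \Delta\ }(\mathcal{C}\circ\mathcal{C})(n)\longrightarrow\mathcal{C}(k)\otimes_{\Sigma_k}\big(\mathcal{C}(i_1)\otimes\cdots\otimes\mathcal{C}(i_k)\big)$$
$$\xrightarrow{\ f\otimes(g_1\otimes\cdots\otimes g_k)\ }\mathcal{P}(k)\otimes_{\Sigma_k}\big(\mathcal{P}(i_1)\otimes\cdots\otimes\mathcal{P}(i_k)\big)\xrightarrow{\ \gamma_{\mathcal{P}}\ }\mathcal{P}(n),$$
where the second arrow is the projection onto the relevant summand of $(\mathcal{C}\circ\mathcal{C})(n)$ and the signs are those dictated by the Koszul rule when $f$ and the $g_j$'s are moved past homogeneous elements of $\mathcal{C}$; the partial compositions $f\circ_i g$ are obtained analogously using an infinitesimal decomposition $\Delta_{(1)}$ together with a partial composition $\circ_i$ of $\mathcal{P}$. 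Using the description of $\circ$ without group quotients by pointed shuffles $Sh_{\ast}(i_1,\ldots,i_k)$ recalled above, this is a well-defined $\mathbb{K}$-linear map of the expected internal degree.

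The unit is the element $\eta_{\mathcal{P}}(1)\circ\varepsilon_{\mathcal{C}}(1)\in\mathrm{Hom}(\mathcal{C}(1),\mathcal{P}(1))$, which is $\mathrm{id}_{\mathbb{K}}$ under the identifications $\mathcal{C}(1)=\mathcal{P}(1)=\mathbb{K}$; the left and right unit axioms for $\gamma$ then follow at once from the counit axiom of $\mathcal{C}$ and the unit axiom of $\mathcal{P}$. Associativity of $\gamma$ is read off the three-level decomposition of $\mathcal{C}(n)$: by coassociativity of $\Delta$ the two possible bracketings give the same map $\mathcal{C}(n)\to(\mathcal{C}\circ\mathcal{C}\circ\mathcal{C})(n)$, which is then transported to $(\mathcal{P}\circ\mathcal{P}\circ\mathcal{P})(n)\to\mathcal{P}(n)$, and the two sides are identified using associativity of $\gamma_{\mathcal{P}}$. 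Equivariance with respect to the $\Sigma_n$-actions $(\sigma\cdot f)(x)=\sigma\cdot f(\sigma^{-1}\cdot x)$ reduces to equivariance of $\Delta$ and of $\gamma_{\mathcal{P}}$: a permutation of the source $\mathcal{C}(n)$ is absorbed by $\Delta$ into a permutation of the shuffle and block data, carried through $f\otimes(g_1\otimes\cdots\otimes g_k)$, and released by $\gamma_{\mathcal{P}}$ on the target $\mathcal{P}(n)$. Finally, that $\mathrm{Hom}(\mathcal{C},\mathcal{P})$ is a \emph{dg} operad amounts to checking that $d(f)=d_{\mathcal{P}}\circ f-(-1)^{|f|}f\circ d_{\mathcal{C}}$ is a derivation for $\gamma$: since $\Delta$ and $\gamma_{\mathcal{P}}$ are chain maps, differentiating the composite above and applying the Leibniz rule at every $\otimes$ yields exactly the $d(f)$-term together with $\sum_j\pm\,\gamma(f;g_1,\ldots,d(g_j),\ldots,g_k)$, with the signs produced by the Koszul rule.

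The delicate part, as always with $\mathrm{Hom}$-constructions on symmetric sequences, is the bookkeeping of Koszul signs along with the shuffle and block-permutation data: one must check that the signs generated when $(\mathcal{C}\circ\mathcal{C})(n)$ and $(\mathcal{P}\circ\mathcal{P})(n)$ are expanded over pointed shuffles agree on the two sides of each axiom, and that the reversal of direction on the $\mathcal{C}$-factor — which enters by precomposition — is compatible with the covariant $\mathcal{P}$-factor. Once these conventions are pinned down, which is exactly why the quotient-free presentation of $\circ$ via $Sh_{\ast}(i_1,\ldots,i_k)$ is convenient, every axiom becomes a formal consequence of the corresponding axiom for $\mathcal{C}$ and $\mathcal{P}$, and one obtains the convolution operad structure on $\mathrm{Hom}(\mathcal{C},\mathcal{P})$.
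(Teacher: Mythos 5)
Your construction of the composition is exactly the one the paper recalls after the proposition (the composite $\Delta$, projection, $f\otimes g_1\otimes\cdots\otimes g_k$, $\gamma_{\mathcal{P}}$), and the paper itself gives no further verification, deferring the axiom checks to Loday--Vallette; your sketch of those checks (coassociativity of $\Delta$ plus associativity of $\gamma_{\mathcal{P}}$, equivariance, Leibniz rule) is the standard and correct argument. So the proposal is correct and follows essentially the same approach as the paper.
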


We recall the operad structure on $\text{\normalfont Hom}(\mathcal{C},\mathcal{P})$. For $f\in \text{\normalfont Hom}(\mathcal{C},\mathcal{P})(k)$, $g_{1}\in \text{Hom}(\mathcal{C},\mathcal{P})(i_{1}),\ldots, g_{k}\in \text{Hom}(\mathcal{C},\mathcal{P})(i_{k})$ the composition $\gamma(f\otimes g_{1}\otimes \cdots\otimes g_{k})$ is given by the composite

\[\begin{tikzcd}
	{\mathcal{C}(n)} & {\mathcal{C}\circ\mathcal{C}(n)} & {\mathcal{C}(k)\otimes\mathcal{C}(i_{1})\otimes \cdots\otimes\mathcal{C}(i_{k})\otimes\mathbb{K}[id]} \\
	&& {\mathcal{P}(k)\otimes \mathcal{P}(i_{1})\otimes \cdots\otimes \mathcal{P}(i_{k})\otimes\mathbb{K}[id]} & {\mathcal{P}\circ\mathcal{P}(n)} & {\mathcal{P}(n)}
	\arrow["\Delta", from=1-1, to=1-2]
	\arrow[two heads, from=1-2, to=1-3]
	\arrow["{f\otimes g_{1}\otimes\cdots\otimes g_{k}\otimes id}", from=1-3, to=2-3]
	\arrow[hook, from=2-3, to=2-4]
	\arrow["\gamma", from=2-4, to=2-5]
\end{tikzcd}\]

\noindent where $\displaystyle n=\sum_{p}i_{p}$.\\

We have $\text{\normalfont Hom}_{\Sigma_{n}}(\mathcal{C}(n),\mathcal{P}(n))=\text{\normalfont Hom}(\mathcal{C}(n),\mathcal{P}(n))^{\Sigma_n}$, and we set
$$\text{\normalfont Hom}_{\Sigma}(\mathcal{C},\mathcal{P})=\prod_{n\geq 1}\text{\normalfont Hom}_{\Sigma_{n}}(\mathcal{C}(n),\mathcal{P}(n));$$

$$\text{\normalfont Hom}_\Sigma(\overline{\mathcal{C}},\overline{\mathcal{P}})=\prod_{n\geq 2}\text{\normalfont Hom}_{\Sigma_n}({\mathcal{C}}(n),\mathcal{P}(n))\subset\text{\normalfont Hom}_\Sigma(\mathcal{C},\mathcal{P}).$$


\noindent Then, according to Corollary \ref{prod}, we have that $\text{\normalfont Hom}_{\Sigma}(\overline{\mathcal{C}},\overline{\mathcal{P}})$ is endowed with a complete $\Gamma(\mathcal{P}re\mathcal{L}ie,-)$-algebra structure. We also have the isomorphism


$$\text{\normalfont Hom}_\Sigma(\mathcal{C},\mathcal{P})\simeq\mathbb{K}\oplus\text{\normalfont Hom}_{\Sigma}(\overline{\mathcal{C}},\overline{\mathcal{P}}),$$

\noindent so that any morphism in $\text{\normalfont Hom}_\Sigma(\overline{\mathcal{C}},\overline{\mathcal{P}})$ can be identified with a morphism in $\text{\normalfont Hom}_\Sigma(\mathcal{C},\mathcal{P})$ which is $0$ on $\mathcal{C}(1)=\mathbb{K}$.\\

We can explicitly describe the weighted braces with one input of $\text{\normalfont Hom}_\Sigma(\overline{\mathcal{C}},\overline{\mathcal{P}})$ in terms of infinitesimal decompositions and compositions.

\begin{lm}
Let $\overline{f},\overline{g}\in \text{\normalfont Hom}_{\Sigma}(\overline{\mathcal{C}},\overline{\mathcal{P}})$. Then $\overline{f}\{\overline{g}\}_{k}$ is given by the composite
\begin{center}
\begin{tikzcd}
\overline{\mathcal{C}}\arrow[r, "\Delta_{(k)}"] & \overline{\mathcal{C}}\circ_{(k)}\overline{\mathcal{C}}\arrow[r, "\overline{f}\circ_{(k)}\overline{g}"] & \overline{\mathcal{P}}\circ_{(k)}\overline{\mathcal{P}}\arrow[r, "\gamma_{(k)}"] & \overline{\mathcal{P}}
\end{tikzcd}.
\end{center}
\end{lm}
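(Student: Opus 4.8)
### Proof plan

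The plan is to unwind both sides against the explicit definitions, using the construction of the weighted braces given in the proof of Theorem~\ref{braces} together with the description of the $\Gamma(\mathcal{P}re\mathcal{L}ie,-)$-algebra structure on $\mathcal{L}(\mathcal{P})=\bigoplus_{r\geq 1}\mathcal{P}(r)^{\Sigma_r}$ from Proposition~\ref{symopealg} (and its completion in Corollary~\ref{prod}). Recall that for homogeneous $p\in\mathcal{P}(m)^{\Sigma_m}$ and $q\in\mathcal{P}(s)^{\Sigma_s}$, the single-input weighted brace is
$$p\{q\}_{k}=\sum_{\substack{1\leq i_{1}<\cdots<i_{k}\leq m\\ \omega\in Sh_{\ast}(1,\ldots,\underset{i_{1}}{s},\ldots,\underset{i_{k}}{s},\ldots,1)}}\pm\,\omega\cdot\gamma\bigl(p(1,\ldots,\underset{i_{1}}{q},\ldots,\underset{i_{k}}{q},\ldots,1)\bigr),$$
extended to the product $\prod_{r}\mathcal{P}(r)^{\Sigma_r}$ componentwise by Formula~$(v)$ of Theorem~\ref{Cesaro}. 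First I would reduce to a fixed arity $n$ on $\overline{\mathcal{C}}$: both $\overline{f}\{\overline{g}\}_{k}$ and the claimed composite $\gamma_{(k)}\circ(\overline{f}\circ_{(k)}\overline{g})\circ\Delta_{(k)}$ are $\Sigma$-equivariant maps $\overline{\mathcal{C}}(n)\to\overline{\mathcal{P}}(n)$, so it suffices to check they agree after evaluating on a generic element $c\in\mathcal{C}(n)$.

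Next I would expand $\Delta_{(k)}(c)$ using the formula for $\mathcal{C}\circ\mathcal{C}(n)$ written without quotients via pointed shuffles (the second displayed description of $M\circ N(n)$ in $\mathsection$\ref{sec:31}): $\Delta_{(k)}(c)$ is a sum of terms $c^{(0)}\otimes c^{(1)}\otimes\cdots\otimes c^{(m)}\otimes[\omega]$ where exactly $k$ of the $c^{(j)}$ lie in $\overline{\mathcal{C}}$ and the rest lie in $I=\mathbb{K}$, and $\omega\in Sh_{\ast}$ records where the nontrivial factors sit. Applying $\overline{f}\circ_{(k)}\overline{g}$ replaces $c^{(0)}$ by $\overline{f}(c^{(0)})$ and each nontrivial $c^{(j)}$ by $\overline{g}(c^{(j)})$, leaving the trivial factors as the operadic unit $1\in\mathcal{P}(1)$; then $\gamma_{(k)}$ is exactly the operadic composition $\gamma$ with these inputs. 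On the other side, I would run the proof of Theorem~\ref{braces} for the specific operad $\mathcal{P}=\text{\normalfont Hom}(\mathcal{C},\mathcal{P})$: since the convolution-operad composition $\gamma(f\otimes g_1\otimes\cdots\otimes g_m)$ is itself defined by precomposing with $\Delta$ (factored through pointed shuffles) and postcomposing with $\gamma$ of the target operad, evaluating $\overline f\{\overline g\}_k$ on $c$ produces precisely the same sum over $k$-element subsets of positions and pointed shuffles, with $\overline f$ applied to the ``root'' factor and $\overline g$ to the $k$ selected factors of a decomposition of $c$, and the operadic unit of $\mathcal{P}$ inserted elsewhere. Matching the sign conventions (the Koszul sign from commuting the $\overline g$'s past each other and past the trivial factors, which is what the $\pm$ in Proposition~\ref{symopealg} and in the pointed-shuffle conventions encodes) then gives the identity.

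The main obstacle I expect is purely bookkeeping: verifying that the indexing set appearing when one unwinds $p\{q\}_k$ for $p=\overline f$, $q=\overline g$ in the convolution operad — namely pairs (choice of $k$ positions $i_1<\cdots<i_k$ among the inputs of a coproduct component of $c$, pointed shuffle $\omega$) — is in canonical bijection, compatibly with the signs and with the operadic unit insertions, with the terms of $\overline{\mathcal{C}}\circ_{(k)}\overline{\mathcal{C}}$ appearing in $\Delta_{(k)}(c)$. This is essentially the content of Definition~\ref{kcomp} and Definition~\ref{compo}: the submodule ``exactly $k$ level-$2$ inputs in $\overline{\mathcal{P}}$, the rest in $I$'' is precisely the image of the sum over $k$-subsets of positions after the unit-insertion identification $\mathcal{P}\simeq I\oplus\overline{\mathcal{P}}$. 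So the argument reduces to observing that the two a priori different sums are reindexed by the same data, and that the sign rule in both cases is the Koszul sign of the permutation moving the $k$ copies into position; I would record this reindexing explicitly and leave the sign verification, which follows the conventions on $Sh_\ast$ fixed in the introduction, to the reader.
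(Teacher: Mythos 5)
Your plan is correct and follows essentially the same route as the paper: reduce to arity-homogeneous components, expand $\overline{f}\{\overline{g}\}_{k}$ via the explicit formula of Proposition~\ref{symopealg} for the convolution operad, and observe that summing the resulting terms over positions and pointed shuffles reassembles exactly $\gamma_{(k)}\circ(\overline{f}\circ_{(k)}\overline{g})\circ\Delta_{(k)}$, the vanishing of $\overline{f}$ and $\overline{g}$ on $\mathcal{C}(1)$ ensuring that only the $k$-infinitesimal part survives. The only step you leave implicit that the paper spells out is the decomposition of $\overline{g}$ into arity-homogeneous pieces and the use of Formula $(v)$ together with the sum over $Sh(r_1,\ldots,r_p)$ to reduce to the homogeneous formula, but you do cite the right ingredient for this.
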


In particular, we retrieve the well-known pre-Lie algebra structure on $\text{\normalfont Hom}_\Sigma(\overline{\mathcal{C}},\overline{\mathcal{P}})$ given by the composite
\begin{center}
\begin{tikzcd}
\overline{\mathcal{C}}\arrow[r, "\Delta_{(1)}"] & \overline{\mathcal{C}}\circ_{(1)}\overline{\mathcal{C}}\arrow[r, "\overline{f}\circ_{(1)}\overline{g}"] & \overline{\mathcal{P}}\circ_{(1)}\overline{\mathcal{P}}\arrow[r, "\gamma_{(1)}"] & \overline{\mathcal{P}}
\end{tikzcd}
\end{center}

\noindent as shown in \cite[Proposition 6.4.5]{loday} (note that here we consider left actions).\\

\begin{proof}
    By definition of the $\Gamma(\mathcal{P}re\mathcal{L}ie,-)-$algebra structure of $\text{\normalfont Hom}_\Sigma(\overline{\mathcal{C}},\overline{\mathcal{P}})$ (see Corollary \ref{prod}), it is sufficient to prove it on $\bigoplus_{r\geq 2}\text{\normalfont Hom}_{\Sigma_r}({\mathcal{C}}(r),\mathcal{P}(r))$. We write $\overline{g}=\overline{g_1}+\cdots+\overline{g_p}$ where $\overline{g_i}\in\text{\normalfont Hom}_{\Sigma_{n_i}}({\mathcal{C}}(n_i),{\mathcal{P}}(n_i))$ with $n_i\neq n_j$ whenever $i\neq j$. Since the identity we need to prove is linear in $\overline{f}$, we can suppose that $\overline{f}\in\text{\normalfont Hom}_{\Sigma_n}({\mathcal{C}}(n),{\mathcal{P}}(n))$. We then have that
    $$\overline{f}\{\overline{g}\}_k=\sum_{r_1+\cdots+r_p=k}\overline{f}\{\overline{g_1},\ldots,\overline{g_p}\}_{r_1,\ldots,r_p}.$$

    \noindent If we denote by $\gamma$ the operadic composition in the operad $\text{\normalfont Hom}(\mathcal{C},\mathcal{P})$, and if we set $\overline{h_1},\ldots,\overline{h_k}=\underbrace{\overline{g_1},\ldots,\overline{g_1}}_{r_1},\ldots,\underbrace{\overline{g_p},\ldots,\overline{g_p}}_{r_p}$ and ${s_1},\ldots,{s_k}=\underbrace{n_1,\ldots,n_1}_{r_1},\ldots,\underbrace{n_p,\ldots,n_p}_{r_p}$, then by definition of the $\Gamma(\mathcal{P}re\mathcal{L}ie,-)$-algebra structure of $\bigoplus_{r\geq 2}\text{\normalfont Hom}_{\Sigma_r}(\mathcal{C}(r),\mathcal{P}(r))$ (see Proposition \ref{symopealg}), we have
    $$\overline{f}\{\overline{g_1},\ldots,\overline{g_p}\}_{r_1,\ldots,r_p}=\sum_{\sigma\in Sh(r_1,...,r_p)}\sum_{\substack{1\leq i_1<\cdots<i_k\leq n\\\omega\in Sh_\ast(1,\ldots,\underset{i_1}{{s}_{\sigma^{-1}(1)}},\ldots,\underset{i_k}{{s}_{\sigma^{-1}(k)}},\ldots,1)}}\pm\omega\cdot\gamma(\overline{f}(1,\ldots,\underset{i_1}{\overline{g}_{\sigma^{-1}(1)}},\ldots,\underset{i_k}{\overline{g}_{\sigma^{-1}(k)}},\ldots,1)).$$

    \noindent For a given $\sigma\in Sh(r_1,\ldots,r_p)$ and $\omega\in Sh_\ast(1,\ldots,\underset{i_1}{{s}_{\sigma^{-1}(1)}},\ldots,\underset{i_k}{{s}_{\sigma^{-1}(k)}},\ldots,1)$, we can see the corresponding term in the sum as the composite

    \[\begin{tikzcd}
	{{\mathcal{C}}(m)} & {\mathcal{C}\circ\mathcal{C}(m)} & {\mathcal{C}(n)\otimes\mathcal{C}(s_{\sigma^{-1}(1)})\otimes\cdots\otimes\mathcal{C}(s_{\sigma^{-1}(k)})\otimes\mathbb{K}[\omega]} \\
	&& {\mathcal{P}(n)\otimes \mathcal{P}(s_{\sigma^{-1}(1)})\otimes\cdots\otimes \mathcal{P}(s_{\sigma^{-1}(k)})\otimes\mathbb{K}[\omega]} & {\mathcal{P}\circ\mathcal{P}(m)} & {{\mathcal{P}}(m)}
	\arrow["\Delta", from=1-1, to=1-2]
	\arrow[two heads, from=1-2, to=1-3]
	\arrow["{\overline{f}\otimes \overline{h}_{\sigma^{-1}(1)}\otimes\cdots\otimes \overline{h}_{\sigma^{-1}(k)}\otimes id}", from=1-3, to=2-3]
	\arrow[hook, from=2-3, to=2-4]
	\arrow["\gamma", from=2-4, to=2-5]
\end{tikzcd}\]

\noindent where $m=n+(n_1-1)r_1+\cdots+(n_p-1)r_p$, and where we omits some unit elements. Summing over the $\omega$'s and using the decomposition of $\mathcal{C}\circ\mathcal{C}(m)$ and $\mathcal{P}\circ\mathcal{P}(m)$ just before Definition \ref{kcomp} will then give the composite
\[\begin{tikzcd}
	{{\mathcal{C}}(m)} & {\mathcal{C}\circ\mathcal{C}(m)} &&& {\mathcal{P}\circ\mathcal{P}(m)} & {{\mathcal{P}}(m)}
	\arrow["\Delta", from=1-1, to=1-2]
	\arrow["{\overline{f}\otimes\overline{h}_{\sigma^{-1}(1)}\otimes\cdots\otimes\overline{h}_{\sigma^{-1}(k)}}", from=1-2, to=1-5]
	\arrow["\gamma", from=1-5, to=1-6]
\end{tikzcd}\]

\noindent where we omit, again, the unit elements. Note that, since $\overline{f}$ and the $\overline{h}_i$'s are $0$ on $\mathcal{C}(1)$, this composite is also $0$ on $\mathcal{C}(1)$. Summing on all $\sigma\in Sh(r_1,\ldots,r_p)$ will give the desired composite.
\end{proof}

\begin{thm}
The circular product of two elements $f=1+\overline{f}, g=1+\overline{g}$
 of the gauge group of $\text{\normalfont Hom}_{\Sigma}(\overline{\mathcal{C}},\overline{\mathcal{P}})$ is given by
\begin{center}
\begin{tikzcd}
f\circledcirc g:\mathcal{C}\arrow[r, "\Delta"] & \mathcal{C}\circ\mathcal{C}\arrow[r, "f\circ g"] & \mathcal{P}\circ\mathcal{P}\arrow[r, "\gamma"] & \mathcal{P}
\end{tikzcd}.
\end{center}
\end{thm}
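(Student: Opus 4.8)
The plan is to unwind the definition of the circular product and identify it summand by summand with the claimed composite. Write $L=\text{\normalfont Hom}_\Sigma(\overline{\mathcal C},\overline{\mathcal P})$, so that $f=1+\overline f\in L_+$ and $g=1+\overline g$ with $\overline g\in L^0$. By definition $f\circledcirc g=\sum_{n\geq 0}f\{\overline g\}_n$, and since the weighted braces are additive in their first argument while $1\{\overline g\}_0=1$, $1\{\overline g\}_1=\overline g$ and $1\{\overline g\}_n=0$ for $n\geq 2$ (equivalently, by the Remark following the definition of $\circledcirc$), this collapses to
\[
f\circledcirc g \;=\; 1+\overline g+\sum_{k\geq 0}\overline f\{\overline g\}_k ,
\]
where in addition $\overline f\{\overline g\}_0=\overline f$ by formula $(ii)$ of Theorem \ref{Cesaro}. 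Since $\overline{\mathcal C}(1)=0$, the module $\overline{\mathcal C}\circ_{(k)}\overline{\mathcal C}(m)$ vanishes for $k$ large compared to a fixed arity $m$, so all the sums above are finite in each arity and it suffices to prove that the morphism $\gamma\circ(f\circ g)\circ\Delta\colon\mathcal C\to\mathcal P$, regarded as an element of $\text{\normalfont Hom}_\Sigma(\mathcal C,\mathcal P)\simeq\mathbb K\oplus\text{\normalfont Hom}_\Sigma(\overline{\mathcal C},\overline{\mathcal P})$, equals $1+\overline g+\sum_{k\geq 0}\overline f\{\overline g\}_k$.

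Next I would use the (co)augmentations. From $\mathcal C\simeq I\oplus\overline{\mathcal C}$, $\mathcal P\simeq I\oplus\overline{\mathcal P}$ and the resulting splitting recalled after Definition \ref{compo},
\[
\mathcal C\circ\mathcal C\;\simeq\;I\circ I\;\oplus\;\overline{\mathcal C}\circ I\;\oplus\;I\circ\overline{\mathcal C}\;\oplus\;\bigoplus_{k\geq 1}\overline{\mathcal C}\circ_{(k)}\overline{\mathcal C},
\]
and similarly for $\mathcal P\circ\mathcal P$. The key point is that $f\circ g$ is \emph{diagonal} for these splittings: because both $f$ and $g$ restrict to the identity on $I=\mathcal C(1)$ and to $\overline f$, respectively $\overline g$, on $\overline{\mathcal C}$, the map $f\circ g$ carries $I\circ I$ into $I\circ I$ by the identity, $\overline{\mathcal C}\circ I$ into $\overline{\mathcal P}\circ I$ by $\overline f\circ\operatorname{id}$, $I\circ\overline{\mathcal C}$ into $I\circ\overline{\mathcal P}$ by $\operatorname{id}\circ\overline g$, and $\overline{\mathcal C}\circ_{(k)}\overline{\mathcal C}$ into $\overline{\mathcal P}\circ_{(k)}\overline{\mathcal P}$ by precisely $\overline f\circ_{(k)}\overline g$. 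On the other end, $\Delta$ restricted to $I$ lands in $I\circ I$, while $\Delta$ restricted to $\overline{\mathcal C}$ splits as $\Delta_{(0)}$, valued in $\overline{\mathcal C}\circ I\oplus I\circ\overline{\mathcal C}$, plus $\sum_{k\geq 1}\Delta_{(k)}$; and $\gamma$ restricted to $I\circ I$, $\overline{\mathcal P}\circ I$, $I\circ\overline{\mathcal P}$ is the unit isomorphism, whereas $\gamma$ restricted to $\overline{\mathcal P}\circ_{(k)}\overline{\mathcal P}$ is $\gamma_{(k)}$ by definition.

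Reading the composite off piece by piece: on $\mathcal C(1)=I$ one gets $\gamma\circ(\operatorname{id}\circ\operatorname{id})\circ\Delta=\operatorname{id}_I$, that is, the summand $1$. On $\overline{\mathcal C}$ the two branches of $\Delta_{(0)}$ contribute $\overline f$ (via $\overline{\mathcal C}\circ I$) and $\overline g$ (via $I\circ\overline{\mathcal C}$), while for each $k\geq 1$ the branch $\Delta_{(k)}$ contributes $\gamma_{(k)}\circ(\overline f\circ_{(k)}\overline g)\circ\Delta_{(k)}$, which is exactly $\overline f\{\overline g\}_k$ by the Lemma just proved. Adding up, $\gamma\circ(f\circ g)\circ\Delta=1+\overline f+\overline g+\sum_{k\geq 1}\overline f\{\overline g\}_k=1+\overline g+\sum_{k\geq 0}\overline f\{\overline g\}_k=f\circledcirc g$, as desired.

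The step I expect to require the most care is the \emph{diagonality} of $f\circ g$ together with the identification of its restrictions with the maps $\overline f\circ_{(k)}\overline g$: this is the assertion that the composition product of symmetric sequences is compatible with the splittings $\mathcal C\simeq I\oplus\overline{\mathcal C}$ and $\mathcal P\simeq I\oplus\overline{\mathcal P}$ and respects the grading by the number of non-unit vertices on the second level. It amounts to the same tree bookkeeping used to introduce the infinitesimal $k$-compositions and $k$-decompositions in Definitions \ref{kcomp}--\ref{compo} and in the discussion of coaugmented cooperads that follows them; granting it, the rest is a direct unwinding of definitions and an application of the preceding Lemma.
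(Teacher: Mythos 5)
Your proposal is correct and follows essentially the same route as the paper: split $\mathcal C\circ\mathcal C$ into $I\circ I$, $\overline{\mathcal C}\circ I\oplus I\circ\overline{\mathcal C}$ and the pieces $\overline{\mathcal C}\circ_{(k)}\overline{\mathcal C}$, observe that $f\circ g$ respects this splitting, and identify the $k$-th piece with $\overline f\{\overline g\}_k$ via the preceding lemma. Your write-up is just a more explicit version of the paper's argument (including the diagonality check the paper leaves implicit), so there is nothing further to add.
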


\begin{proof}
Because $f_{|I}=g_{|I}=1$, we have that $(f\circledcirc g)_{|I}=1$. We thus need to show the equality on $\overline{\mathcal{C}}$. Recall that we have infinitesimal decompositions on $\overline{\mathcal{C}}$ denoted by $\Delta_{(0)}$ and $\Delta_{(k)}$ for $k\geq 1$ such that $\Delta_{|\overline{\mathcal{C}}}=\Delta_{(0)}\oplus\bigoplus_{k\geq 1}\Delta_{(k)}$. The map $\Delta_{(0)}$ will give $\overline{f}+\overline{g}$, and each $\Delta_{(k)}$ will give $\overline{f}\{\overline{g}\}_{k}$ according to the previous lemma. We thus have that the composite in the statement of the theorem gives
$$1+\overline{f}+\sum_{k\geq 0}\overline{f}\{\overline{g}\}_{k}$$

\noindent which is exactly $f\circledcirc g$.
\end{proof}

\subsection{Computation of $\pi_0(\text{\normalfont Map}(B^c(\mathcal{C}),\mathcal{P}))$}\label{sec:33}

We now extend the computation of $\pi_0(\text{\normalfont Map}(B^c(\mathcal{C}),\mathcal{P}))$ on a field $\mathbb{K}$ with positive characteristic. In this last section, we chose to work with a homological convention to follow the conventions in the literature. Note that this does not change anything on the results of the previous sections.\\

Recall that we can give an explicit cylinder object for $B^c(\mathcal{C})$, where $B^c$ is the cobar construction of $\mathcal{C}$, when $\mathcal{C}$ is $\Sigma_{*}$-cofibrant (see for instance \cite{fressecyl} or \cite{loday}). Explicitly, let $K=\mathbb{K}\sigma^{0}\oplus\mathbb{K}\sigma^{1}\oplus\mathbb{K}\sigma^{01}$ where $|\sigma^{0}|=|\sigma^{1}|=-1$, $|\sigma^{01}|=0$ and $d(\sigma^{01})=\sigma^{1}-\sigma^{0}$. Then there exists a derivation of operads $\partial$ such that the free dg operad $(\mathcal{F}(K\otimes\overline{\mathcal{C}}),\partial)$ is a cylinder object for $B^c(\mathcal{C})$. We refer to \cite[$\mathsection$5.1]{fressecyl} for an explicit construction of $\partial$ and a proof of the previous statement.

\begin{thm}
Suppose that $\mathcal{C}$ is $\Sigma_{*}$-cofibrant. We then have a bijection:
$$\pi_{0}(\text{\normalfont Map}(B^c(\mathcal{C}),\mathcal{P}))\simeq \pi_0\text{\normalfont Deligne}(\text{\normalfont Hom}_{\Sigma}(\overline{\mathcal{C}},\overline{\mathcal{P}})).$$
\end{thm}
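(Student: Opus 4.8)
The plan is to identify $\pi_0$ of the mapping space $\text{\normalfont Map}(B^c(\mathcal{C}),\mathcal{P})$ with the quotient of the set of operad morphisms $\text{\normalfont Hom}_{\text{\normalfont Op}}(B^c(\mathcal{C}),\mathcal{P})$ by the homotopy relation, and then to translate both the morphism set and the homotopy relation into Maurer-Cartan data for the convolution $\Gamma(\mathcal{P}re\mathcal{L}ie,-)$-algebra $L=\text{\normalfont Hom}_\Sigma(\overline{\mathcal{C}},\overline{\mathcal{P}})$. First I would recall the standard adjunction/twisting-morphism dictionary: because $B^c(\mathcal{C})$ is the cobar construction, an operad morphism $B^c(\mathcal{C})\to\mathcal{P}$ is the same thing as a twisting morphism $\mathcal{C}\to\mathcal{P}$, i.e. a degree $-1$ element $\alpha\in\text{\normalfont Hom}_\Sigma(\overline{\mathcal{C}},\overline{\mathcal{P}})$ satisfying $\partial(\alpha)+\alpha\star\alpha=0$, where $\star$ is the pre-Lie product of the convolution operad. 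By the lemma computing the weighted braces of $\text{\normalfont Hom}_\Sigma(\overline{\mathcal{C}},\overline{\mathcal{P}})$ (giving $\alpha\{\alpha\}_1=\gamma_{(1)}\circ(\alpha\circ_{(1)}\alpha)\circ\Delta_{(1)}=\alpha\star\alpha$), this twisting equation is precisely the Maurer-Cartan equation $d(\alpha)+\alpha\{\alpha\}_1=0$ of Section~\ref{sec:23}. Hence $\text{\normalfont Hom}_{\text{\normalfont Op}}(B^c(\mathcal{C}),\mathcal{P})\cong\mathcal{MC}(\text{\normalfont Hom}_\Sigma(\overline{\mathcal{C}},\overline{\mathcal{P}}))$ as sets.

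Next I would address the homotopy relation. Since $\mathcal{C}$ is $\Sigma_*$-cofibrant, $B^c(\mathcal{C})$ is a cofibrant operad and $\mathcal{P}$ is fibrant (everything is), so $\pi_0\text{\normalfont Map}(B^c(\mathcal{C}),\mathcal{P})$ is the set of left-homotopy classes of operad morphisms, computed using any cylinder object. I would invoke Fresse's explicit cylinder $(\mathcal{F}(K\otimes\overline{\mathcal{C}}),\partial)$ recalled just before the statement, with $K=\mathbb{K}\sigma^0\oplus\mathbb{K}\sigma^1\oplus\mathbb{K}\sigma^{01}$. A morphism out of this cylinder into $\mathcal{P}$ unpacks, by freeness, into: two twisting morphisms $\alpha_0,\alpha_1:\mathcal{C}\to\mathcal{P}$ (the restrictions along the two copies $\sigma^0\otimes\overline{\mathcal{C}}$, $\sigma^1\otimes\overline{\mathcal{C}}$), together with a degree $0$ element $\lambda\in\text{\normalfont Hom}_\Sigma(\overline{\mathcal{C}},\overline{\mathcal{P}})$ coming from $\sigma^{01}\otimes\overline{\mathcal{C}}$, and the compatibility with $\partial$ forces an equation relating $\alpha_0,\alpha_1$ and $\lambda$. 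The claim I need is that this equation is exactly the gauge relation $(1+\mu)\cdot\alpha_0=\alpha_1$ of Theorem in Section~\ref{sec:23}, for $\mu$ a suitable exponential-type reindexing of $\lambda$ (or $\mu=\lambda$ directly, depending on how $\partial$ is normalized). Matching the differential $d(\sigma^{01})=\sigma^1-\sigma^0$ and the explicit formula for $\partial$ from \cite[$\mathsection$5.1]{fressecyl} against the weighted-brace expansion $(\alpha+\mu\{\alpha\}_1-d(\mu))\circledcirc(1+\mu)^{\circledcirc-1}=\beta$ is the technical heart; here the circular product identity of Section~\ref{sec:32}, $f\circledcirc g=\gamma\circ(f\circ g)\circ\Delta$, is what lets me recognize the operadic formulas produced by $\partial$ as instances of $\circledcirc$.

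Finally, I would assemble the pieces: the bijection $\text{\normalfont Hom}_{\text{\normalfont Op}}(B^c(\mathcal{C}),\mathcal{P})\cong\mathcal{MC}(L)$ carries the cylinder-homotopy relation to the gauge-equivalence relation, so passing to quotients gives $\pi_0\text{\normalfont Map}(B^c(\mathcal{C}),\mathcal{P})\cong\mathcal{MC}(L)/G=\pi_0\text{\normalfont Deligne}(L)$. One bookkeeping point to check along the way is that the homotopy relation on morphisms is genuinely an equivalence relation here — this is automatic since the source is cofibrant and the target fibrant in the model category of dg operads, but it is also directly visible from the fact that $G$ is a group (Theorem in Section~\ref{sec:22}) acting on $\mathcal{MC}(L)$. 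I expect the main obstacle to be the second paragraph: verifying that Fresse's derivation $\partial$ on the cylinder produces precisely the gauge action formula $(\alpha+\mu\{\alpha\}_1-d(\mu))\circledcirc(1+\mu)^{\circledcirc-1}$ rather than merely something equivalent to it, which requires carefully tracking signs, the role of the $\Sigma_*$-cofibrancy of $\mathcal{C}$ (so that $K\otimes\overline{\mathcal{C}}$ generates freely and the homotopy classes are computed correctly), and the normalization of the exponential identification between the generator $\sigma^{01}\otimes\overline{\mathcal{C}}$ and the gauge parameter $\mu\in L^0$.
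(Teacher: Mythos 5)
Your proposal is correct and follows essentially the same route as the paper: identify operad morphisms $B^c(\mathcal{C})\to\mathcal{P}$ with Maurer--Cartan elements of $\text{\normalfont Hom}_\Sigma(\overline{\mathcal{C}},\overline{\mathcal{P}})$, then use Fresse's explicit cylinder $(\mathcal{F}(K\otimes\overline{\mathcal{C}}),\partial)$ to show that a homotopy between $\phi_\alpha$ and $\phi_\beta$ is exactly the data of a gauge element, with the compatibility with $\partial$ on $\sigma^{01}\otimes\overline{\mathcal{C}}$ unwinding to the gauge-action equation. The one point you left open resolves in the simplest way: the paper takes $\mu=\lambda$ directly (no exponential reindexing), and the verification reduces to the single computation of $d(h)(\sigma^{01}\otimes\gamma)$.
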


\begin{proof} Recall from Fresse (see \cite[Theorem 3.2.14]{fresselivre} for instance) that $\pi_{0}(\text{\normalfont Map}(B^c(\mathcal{C}),\mathcal{P}))\simeq (Mor(B^c(\mathcal{C}),\mathcal{P}),\sim_{h})$ where $\sim_{h}$ is the homotopy relation in the category of symmetric operads. Recall also that the data of a Maurer-Cartan element $\alpha$ in $\text{\normalfont Hom}_{\Sigma}(\overline{\mathcal{C}},\overline{\mathcal{P}})$ is equivalent to give a morphism of operads $\phi_{\alpha}$ from $B^c(\mathcal{C})$ to $\mathcal{P}$ (see \cite{fressecyl} or \cite{loday}). We just need to show that the action of the gauge group on the Maurer-Cartan set of $\text{\normalfont Hom}_{\Sigma}(\overline{\mathcal{C}},\overline{\mathcal{P}})$ from one Maurer-Cartan $\alpha$ to an other one $\beta$ is equivalent to give a homotopy from $\phi_{\alpha}$ to $\phi_{\beta}$.\\

Let $1+\lambda$ be an element of the gauge group. We define a morphism $h:\text{\normalfont Cyl}(B^c(\mathcal{C}))\longrightarrow\mathcal{P}$ via $h:K\otimes\overline{\mathcal{C}}\longrightarrow\mathcal{F}(K\otimes\overline{\mathcal{C}})$ by setting
$$h(\sigma^{0}\otimes\gamma)=\alpha(\gamma),$$
$$h(\sigma^{1}\otimes\gamma)=\beta(\gamma),$$
$$h(\sigma^{01}\otimes\gamma)=\lambda(\gamma),$$

\noindent where $\gamma$ is some element of $\overline{\mathcal{C}}$. We claim that $(1+\lambda)\cdot\alpha=\beta$ if and only if $h$ is a homotopy from $\phi_{\alpha}$ to $\phi_{\beta}$. Accordingly, we must prove the equivalence
$$d(\lambda)=\alpha+\lambda\{\alpha\}-\beta\circledcirc (1+\lambda)\Leftrightarrow d(h)=0$$

\noindent where $d$ is the differential of $Mor(B^{c}(\mathcal{C}),\mathcal{P})$.\\

Because $\alpha$ and $\beta$ are Maurer-Cartan elements, and by definition of $\partial$, the second equality is always satisfied for $\sigma^{\varepsilon}\otimes\gamma$ with $\varepsilon=0,1$ and $\gamma\in\overline{\mathcal{C}}$. We just need to check this equality on terms $\sigma^{01}\otimes\gamma$ for any $\gamma\in\overline{\mathcal{C}}$:
\begin{center}
$\begin{array}{lll}
d(h)(\sigma^{01}\otimes\gamma) & = & d(h(\sigma^{01}\otimes\gamma))-h(\partial(\sigma^{01}\otimes \gamma))\\
& = & d(\lambda(\gamma))-\lambda(d(\gamma))-\alpha(\gamma)+\beta(\gamma)-\lambda\{\alpha\}_{1}(\gamma)+(\beta\circledcirc(1+\lambda)(\gamma)-\beta(\gamma))\\
& = & d(\lambda)(\gamma)-\alpha(\gamma)-\lambda\{\alpha\}_{1}(\gamma)+\beta\circledcirc(1+\lambda)(\gamma).\\
\end{array}$
\end{center}

We then have the desired equivalence.
\end{proof}

\printbibliography[heading=bibintoc,title={References}]

\end{document}